\documentclass[12pt,reqno]{amsart}
\usepackage{amsmath,amssymb,amsthm}
\usepackage{geometry}
\usepackage{hyperref}
\usepackage[english]{babel}
\usepackage{microtype}
\usepackage{enumitem}
\usepackage{graphicx}
\newtheorem{theorem}{Theorem}[section]
\newtheorem{lemma}[theorem]{Lemma}
\newtheorem{corollary}[theorem]{Corollary}
\newtheorem{conjecture}[theorem]{Conjecture}
\theoremstyle{remark}

\newtheorem{remark}{Remark}[section]

\numberwithin{equation}{section}

\begin{document}

\title[New Congruences Involving $p$-adic dual sequences]
{New Congruences Involving $p$-adic dual sequences}

\author[Y. Otmani]{Yassine Otmani}
\address{Department of Mathematics, USTHB, RECITS Laboratory,
P.O.\ Box 32, Bab Ezzouar, El Alia 16111, Algiers, Algeria}
\email{yotmani@usthb.dz}


\subjclass[2020]{Primary 11A07; Secondary 11B65, 05A10}

\keywords{Congruences; Legendre symbol; dual sequence; generalized central trinomial coefficients; Catalan–Larcombe–French numbers; Legendre polynomials}

\begin{abstract}Let $(a_n)_{n\geqslant 0}$ be a sequence of integers. Its dual sequence
$(a_n^*)_{n\geqslant 0}$ is defined by
\begin{equation*}
  a_n^* := \sum_{k=0}^{n} \binom{n}{k}(-1)^k a_k.
\end{equation*}
 Let $p>3$ be a prime. In this paper we mainly investigate congruences modulo $p^2$ involving central binomial coefficients and $p$-adic dual sequences. For example, we prove that for any sequence $(a_k)_{k\ge0}$ of $p$-adic
integers, 
\begin{align*}
\sum^{(p-1)/2}_{k=0}\binom{2k}{k}^2\frac{a_{2k}}{16^k}\equiv\left( \frac{-1}{p}\right) \sum_{k=0}^{p-1}\frac{\mathcal{P}_{k}}{16 ^{k}}a_{k}^*\pmod{p^2},
\end{align*}
where $(\mathcal{P}_n)_{n\ge0}$ are the Catalan--Larcombe--French numbers given by
\begin{equation*}
  \mathcal{P}_0=1,\quad \mathcal{P}_1=8, \quad n^2 \mathcal{P}_n = 8(3n^2-3n+1)\mathcal{P}_{n-1}-128(n-1)^2\mathcal{P}_{n-2}
  \quad (n\ge2).
\end{equation*}
 We also  establish a new formula for $\sum_{k=0}^{(p-1)/2}\binom{2k}{k}a_{2k}^*/4^k \pmod{p^2}$ and as a consequence we confirm some
conjectures of Z.-W. Sun \cite{Sun2014CANT} on the generalized central trinomial
coefficients $T_{2k}(b,c)$, i.e., the coefficient of $x^{2k}$ in
$(x^2+bx+c)^{2k}$, where $b,c$ are integers.
\end{abstract}

\maketitle


\section{Introduction}

Let $p$ be an odd prime, $n$ be an integer, and let $\left(\frac{\cdot}{p}\right)$ denote the Legendre symbol, defined by
\begin{align*}
\left(\frac{n}{p}\right):=\begin{cases}0 &\text{if $p$ divides $n$},\\
1 &\text{if $n$ is a quadratic residue modulo $p$}, \\
-1 &\text{ if $n$ is a quadratic nonresidue modulo $p$}. \end{cases}
\end{align*}

In 2003, Rodriguez-Villegas~\cite{RV2003} conjectured the following four
supercongruences modulo $p^2$: for any prime $p > 3$,
\begin{align}
  \sum_{k=0}^{p-1} \frac{\binom{2k}{k}^2}{16^k}
  &\equiv \left(\frac{-1}{p}\right) \pmod{p^2}, \label{eq:RV1} \\
  \sum_{k=0}^{p-1} \frac{\binom{3k}{k}\binom{2k}{k}}{27^k}
  &\equiv \left(\frac{p}{3}\right) \pmod{p^2}, \label{eq:RV2} \\
  \sum_{k=0}^{p-1} \frac{\binom{4k}{2k}\binom{2k}{k}}{64^k}
  &\equiv \left(\frac{-2}{p}\right) \pmod{p^2} \label{eq:RV3} 
\end{align}
and
\begin{align}
 \sum_{k=0}^{p-1} \frac{\binom{6k}{3k}\binom{3k}{k}}{432^k}
  &\equiv \left(\frac{-1}{p}\right) \pmod{p^2}. \label{eq:RV4}
\end{align}  

These were confirmed by Mortenson~\cite{M1,M2} via the $p$-adic gamma function and
the Gross--Koblitz formula for character sums. Elementary proofs of \eqref{eq:RV1}--\eqref{eq:RV4}
were later given in \cite{ZHSun2014}.

Recall that for a sequence $(a_n)_{n\geqslant 0}$ of integers, its \emph{dual sequence}
$(a_n^*)_{n\geqslant 0}$ is defined by
\begin{equation}\label{eq:dual}
  a_n^* := \sum_{k=0}^{n} \binom{n}{k}(-1)^k a_k \qquad (n=0,1,2,\ldots).
\end{equation}
It is well known that $(a_n^*)^* = a_n$ for all $n \geqslant 0$
(see, e.g., \cite[p.~192]{GKP}).
Z.-H.~Sun~\cite{ZHSun2011} proved that for any odd prime $p$ and any sequence
$(a_k)_{k\geqslant 0}$ of $p$-adic integers,
\begin{equation}\label{eq:ZHSun}
  \sum_{k=0}^{(p-1)/2} \frac{\binom{2k}{k}^2}{16^k}
  \left(a_k - \left(\frac{-1}{p}\right) a_k^*\right) \equiv 0 \pmod{p^2}.
\end{equation}

In \cite{Sun2013FFA}, Z.-W.~Sun established the following analogues of \eqref{eq:ZHSun}
for the three remaining Rodriguez-Villegas weights.

\begin{theorem}[{Z.-W.~Sun \cite[Theorem~1.4]{Sun2013FFA}}]\label{thm:Sun14}
Let $p > 3$ be a prime and let $(a_k)_{k\geqslant 0}$ be any sequence of $p$-adic
integers. Then
\begin{align}
  \sum_{k=0}^{p-1} \frac{\binom{3k}{k}\binom{2k}{k}}{27^k}\, a_k
  &\equiv \left(\frac{p}{3}\right)
    \sum_{k=0}^{p-1} \frac{\binom{3k}{k}\binom{2k}{k}}{27^k}\, a_k^*
  \pmod{p^2}, \label{eq:T14-1} \\[4pt]
  \sum_{k=0}^{p-1} \frac{\binom{4k}{2k}\binom{2k}{k}}{64^k}\, a_k
  &\equiv \left(\frac{-2}{p}\right)
    \sum_{k=0}^{p-1} \frac{\binom{4k}{2k}\binom{2k}{k}}{64^k}\, a_k^*
  \pmod{p^2}, \label{eq:T14-2} \\[4pt]
  \sum_{k=0}^{p-1} \frac{\binom{6k}{3k}\binom{3k}{k}}{432^k}\, a_k
  &\equiv \left(\frac{-1}{p}\right)
    \sum_{k=0}^{p-1} \frac{\binom{6k}{3k}\binom{3k}{k}}{432^k}\, a_k^*
  \pmod{p^2}. \label{eq:T14-3}
\end{align}
\end{theorem}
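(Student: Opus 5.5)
Since both sides of each congruence are linear in the sequence $(a_k)$, and $a_k^*$ depends only on $a_0,\dots,a_k$, it suffices to check the case where $a$ is a unit vector. Write $w_k$ for the weight, e.g.\ $w_k=\binom{3k}{k}\binom{2k}{k}/27^k$, and $\varepsilon$ for the Legendre symbol. The unit vector $a_k=\delta_{k,j}$ (with $0\le j\le p-1$) has $a_k^*=(-1)^j\binom{k}{j}$. So each statement is equivalent to the family of congruences
\begin{equation*}
w_j\equiv \varepsilon\,(-1)^j\sum_{k=j}^{p-1}\binom{k}{j}w_k \pmod{p^2},\qquad 0\le j\le p-1.
\end{equation*}
The plan is to prove this family, for each of the three weights in Theorem~\ref{thm:Sun14}, by evaluating the inner sum.

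\emph{Step 1: rewrite the weights as Pochhammer ratios.} We have
\[
\binom{3k}{k}\binom{2k}{k}27^{-k}=\frac{(1/3)_k(2/3)_k}{k!^2},\qquad
\binom{4k}{2k}\binom{2k}{k}64^{-k}=\frac{(1/4)_k(3/4)_k}{k!^2},\qquad
\binom{6k}{3k}\binom{3k}{k}432^{-k}=\frac{(1/6)_k(5/6)_k}{k!^2}.
\]
Thus $w_k=(\alpha)_k(1-\alpha)_k/k!^2$ with $\alpha\in\{1/3,1/4,1/6\}$. Let $a\in\{0,\dots,p-1\}$ satisfy $a\equiv-\alpha\pmod p$, and put $t=(\alpha+a)/p$. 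Up to terms of order $p^2$, $w_k$ agrees with $\binom{-\alpha}{k}\binom{\alpha-1}{k}$, and hence with $\binom{a-pt}{k}\binom{p-1-a+pt}{k}$. For $k\le p-1$ this product vanishes mod $p$ unless $k\le\min(a,p-1-a)$. Substituting $\binom{k}{j}\binom{x}{k}=\binom{x}{j}\binom{x-j}{k-j}$ turns the inner sum into
\[
\binom{-\alpha}{j}\sum_{i}\binom{-\alpha-j}{i}\binom{\alpha-1}{i+j}.
\]
Extended to all $i$, this is a Chu–Vandermonde sum, equal to $\binom{-\alpha}{j}\binom{-1-j}{p'}$-type closed forms. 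Concretely, Vandermonde gives $\sum_i\binom{x}{i}\binom{y}{m+i}=\binom{x+y}{x-m}$, and here $x+y=-1-j$.

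\emph{Step 2: control the truncation.} This is the main obstacle. The sum is cut off at $k=p-1$, whereas the Vandermonde identity needs the full (possibly infinite) range, because $-\alpha$ is not a nonnegative integer. First I would justify that the terms with $p\le k$ contribute $0\pmod{p^2}$ $p$-adically. These terms are in any case absent from the truncated sum, so the real question is whether the finite sum $\sum_{k\le p-1}$ coincides mod $p^2$ with the polynomial identity evaluated at $x=a-pt$.

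To do this, view everything as a polynomial in $t$. Each $w_k$ (for $k\le p-1$) is $p$-integral, and modulo $p^2$ it is linear in $pt$. So it suffices to verify the identity at $t=0$, where it is an exact finite Vandermonde identity, and to check its first derivative in $t$. As in Z.-H.~Sun's proof of \eqref{eq:ZHSun}, the derivative term carries a factor $p$. It reduces to comparing harmonic-type sums $\sum_{i<k}1/(a-i)$ against their counterparts for $p-1-a$. The symmetry $a\leftrightarrow p-1-a$ gives the required cancellation mod $p$.

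\emph{Step 3: identify the sign.} At $t=0$, the full Vandermonde value yields
\[
\varepsilon(-1)^j\sum_k\binom{k}{j}w_k\equiv(-1)^{a}\,w_j .
\]
The remaining task is to check that $(-1)^a$ equals $\left(\frac{p}{3}\right)$, $\left(\frac{-2}{p}\right)$, $\left(\frac{-1}{p}\right)$ in the respective cases. This is a direct computation from $a=\langle-\alpha\rangle_p$. For example, when $\alpha=1/3$ we get $a=(p-1)/3$ or $(2p-1)/3$, whose parity matches $\left(\frac p3\right)$. The other two weights are handled the same way modulo $8$ and $12$.
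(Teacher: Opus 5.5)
Your reduction to unit vectors is fine. So are the identification $w_k=u_k(-\alpha)$ with $u_k(x):=\binom{x}{k}\binom{-1-x}{k}$ and $-\alpha=a-pt$, the exact identity at $t=0$, and the parity check $\varepsilon=(-1)^a$. The gap is Step~2, which you yourself call the main obstacle and then only assert. Put
\[
\Delta_j(x):=u_j(x)-(-1)^{a+j}\sum_{k=j}^{p-1}\binom{k}{j}u_k(x)\in\mathbb{Z}_{(p)}[x].
\]
Then $\Delta_j(a-pt)\equiv\Delta_j(a)-pt\,\Delta_j'(a)\pmod{p^2}$ and $\Delta_j(a)=0$, so as written your argument proves the theorem only modulo $p$. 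The whole modulo-$p^2$ content is the claim $\Delta_j'(a)\equiv0\pmod p$. ``Comparing harmonic-type sums \dots\ the symmetry gives the required cancellation'' is not an argument for it.

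Nor is this ``as in'' the case $\alpha=\frac12$. There $a=p-1-a$; indeed the paper's Lemma~\ref{lemma2.3} amounts to $u_k(-\frac12)\equiv u_k(\frac{p-1}{2})\pmod{p^2}$, because $u_k$ depends only on $x(x+1)$ and the two values of $x(x+1)$ differ by $p^2/4$. For $\alpha=\frac13,\frac14,\frac16$, by contrast, the truncated sum contains the terms with $\min(a,p-1-a)<k\le\max(a,p-1-a)$. These are in general of exact order $p$: for $p=5$, $\alpha=\frac14$, $a=1$ one has $u_2'(1)=\frac32$ and $u_3'(1)=\frac23$. They would have to be matched against the corrections $u_k(a)(H_{a+k}-H_{a-k})$, $k\le a$, and you do none of this.

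There are also slips:
\begin{enumerate}
\item $\alpha-1=-1-a+pt$, so your $\binom{a-pt}{k}\binom{p-1-a+pt}{k}$ is not $\equiv w_k\pmod{p^2}$. For $p=7$, $\alpha=\frac13$ (so $a=2$, $t=\frac13$) and $k=1$ it equals $-\frac{19}{9}$, while $w_1=\frac29$.
\item Vandermonde reads $\sum_i\binom{x}{i}\binom{y}{m+i}=\binom{x+y}{x+m}$. This gives $\sum_{k\ge j}\binom kj u_k(a)=\binom aj\binom{-1-j}{a}=(-1)^{a+j}u_j(a)$.
\item The $\varepsilon$ in your Step~3 display should not be there.
\end{enumerate}

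What is missing is a second exact node, not a derivative computation. Since $u_k(x)=u_k(-1-x)$, at $t=1$ you get $u_k(a-p)=u_k(b)$ with $b=p-1-a\in\{0,\dots,p-1\}$. The truncated sum therefore terminates again, and the same Vandermonde computation gives $\sum_{k\ge j}\binom kj u_k(b)=(-1)^{b+j}u_j(b)$ exactly. Hence $\Delta_j(a-p)=\bigl(1-(-1)^{a+b}\bigr)u_j(b)=0$, because $a+b=p-1$ is even. Then $0=\Delta_j(a-p)\equiv-p\,\Delta_j'(a)\pmod{p^2}$, and so $\Delta_j(a-pt)\equiv-t\cdot p\,\Delta_j'(a)\equiv0\pmod{p^2}$ for every $p$-integral $t$.

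With Step~2 replaced by this, your proof is complete. It in fact establishes Z.-H.~Sun's congruence \eqref{eq:ZHSun2014} for every $p$-integral $\alpha$, which is the route the paper points to. The paper gives no proof of Theorem~\ref{thm:Sun14} itself; it only cites Z.-W.~Sun and remarks that \eqref{eq:ZHSun2014} with $\alpha=-\frac13,-\frac14,-\frac16$ yields it.
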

A further dual-sequence supercongruence, this time for a single central binomial
coefficient rather than a product, was established in \cite{SunNan2015}. There Sun
showed that if $p$ is an odd prime, $(a_k)_{k\geqslant 0}$ is any sequence of $p$-adic
integers, and $m$ is an integer with $p \nmid m(m-4)$, then
\begin{equation}\label{eq:SunNan}
  \sum_{k=0}^{p-1}\frac{\binom{2k}{k}a_k^*}{(4-m)^k}
  \equiv \left(\frac{m(m-4)}{p}\right)
  \sum_{k=0}^{p-1}\frac{\binom{2k}{k}a_k}{m^k} \pmod{p}.
\end{equation}

In \cite{ZHSun2014}, Z.-H.~Sun extended \eqref{eq:ZHSun} to a more general framework
via generalized Legendre polynomials $P_n(a,x) = \sum_{k=0}^n
\binom{a}{k}\binom{-1-a}{k}\bigl(\frac{1-x}{2}\bigr)^k$. Specifically,
he showed~\cite[(1.10)]{ZHSun2014} that for any odd prime $p$, any
$p$-integral $\alpha \in \mathbb{Q}$, and any sequence $f(0),\ldots,f(p-1)$ of
$p$-adic integers,
\begin{equation}\label{eq:ZHSun2014}
  \sum_{k=0}^{p-1} \binom{\alpha}{k}\binom{-1-\alpha}{k}
  \!\left((-1)^{\langle\alpha\rangle_p} f(k) -
  \sum_{m=0}^{k}\binom{k}{m}(-1)^m f(m)\right) \equiv 0 \pmod{p^2},
\end{equation}
where $\langle\alpha\rangle_p$ is the least nonneg\-ative residue of $\alpha$ modulo
$p$. Taking $\alpha = -\frac{1}{2}$ recovers \eqref{eq:ZHSun}, and taking $\alpha =
-\frac{1}{3},-\frac{1}{4},-\frac{1}{6}$ yields the analogues for the weights in
\eqref{eq:RV2}--\eqref{eq:RV4}.

Supercongruences of the type \eqref{eq:ZHSun}--\eqref{eq:T14-3} have since been investigated from several directions. Z.-W.~Sun~\cite{Sun2011SC} proved that
\[
  \sum_{k=0}^{(p-1)/2}\frac{\binom{2k}{k}^2}{16^k}
  \equiv \left(\frac{-1}{p}\right) + p^2\,E_{p-3} \pmod{p^3},
\]
where $E_{p-3}$ is the $(p-3)$-th Euler number. Extensions to sums involving products of three binomial
coefficients were obtained by Sun~\cite{ZH.Sun2023, ZH.Sun2026}. On
the $q$-analogue side, a $q$-extension of \eqref{eq:ZHSun2014} was
established by Ni and Pan \cite[Theorem~1.1, (1.14)]{NiPan2020}, who proved that for any odd positive integer
$n \geqslant 3$ with $\gcd(n,d)=1$, any integer $r$, and any polynomials
$f_0(q),\ldots,f_{n-1}(q)\in\mathbb{Z}[q]$,
\begin{align}\label{eq:NiPan}
  q^{d\binom{a+1}{2}+\frac{(ad+r)(n-1-2a)}{2}}
  \sum_{k=0}^{n-1}\frac{(q^r;q^d)_k(q^{d-r};q^d)_k}{(q^d;q^d)_k^2}\,
  q^{dk}f_k(q^d)\nonumber \\
  \equiv (-1)^a
  \sum_{k=0}^{n-1}\frac{(q^r;q^d)_k(q^{d-r};q^d)_k}{(q^d;q^d)_k^2}\,
  q^{dk}\hat{f}_k(q^d)
  \pmod{\Phi_n(q)^2},
\end{align}
where $a=\langle -r/d\rangle_n$, $\Phi_n(q)$ is the $n$-th cyclotomic polynomial,
$(x;q)_k=(1-x)(1-xq)\cdots(1-xq^{k-1})$ is the $q$-shifted factorial, and
$\hat{f}_k(q) = \sum_{j=0}^k(-1)^j
q^{\binom{j+1}{2}}\genfrac{[}{]}{0pt}{}{k}{j}_q f_j(q)$.
Setting $q=1$ and $n=p$ in \eqref{eq:NiPan} recovers \eqref{eq:ZHSun2014}. For more background on $q$-analogues of
supercongruences concerning \eqref{eq:RV1}--\eqref{eq:RV4} we refer the reader to \cite{Guo2014,Gou2017}.

The \emph{Catalan-Larcombe-French numbers} $(\mathcal{P}_n)_{n\geqslant 0}$ are defined by
\begin{equation}\label{eq:CLF}
 \mathcal{P}_n = \sum_{k=0}^{n}\frac{\binom{2k}{k}^2\binom{2n-2k}{n-k}^2}{\binom{n}{k}}
  \qquad (n = 0,1,2,\ldots),
\end{equation}
giving $\mathcal{P}_0=1$, $\mathcal{P}_1=8$, $\mathcal{P}_2=80$, $\mathcal{P}_3=896$, $\mathcal{P}_4=10816$, \ldots\
(see \cite{LF2000}). They satisfy the recurrence
\begin{equation}\label{eq:CLFrec}
  n^2 \mathcal{P}_{n} = 8(3n^2-3n+1)\mathcal{P}_{n-1}-128(n-1)^2\mathcal{P}_{n-2} \quad (n\geqslant 2).
\end{equation}
 Supercongruences for the Catalan-Larcombe-French sequence were
initiated by Jarvis and Verrill~\cite{JV2010}, who proved that for any prime $p > 3$,
$\mathcal{P}_{p-1} \equiv 8^{p-1} \pmod{p^2}$.
Z.-H.~Sun~\cite{ZHSun2015} subsequently established congruences for
$\sum_{k=0}^{p-1}\binom{2k}{k}\mathcal{P}_k/m^k \pmod{p}$ for several values of $m$ and posed further conjectures modulo $p^2$; additional results appear in Ji and Sun~\cite{JiSun2015} and Mao \cite{mao2015}.

The \emph{generalized central trinomial coefficient} $T_n(b,c)$ is defined as the
coefficient of $x^n$ in the expansion of $(x^2+bx+c)^n$, i.e.,
\begin{equation}\label{eq:Tnbc}
  T_n(b,c) = \sum_{j=0}^{\lfloor n/2\rfloor}
  \binom{n}{2j}\binom{2j}{j}\,b^{n-2j}c^j \qquad (b,c\in\mathbb{Z}).
\end{equation}
The standard central trinomial coefficient is $T_n = T_n(1,1)$, the central binomial coefficient is $\binom{2n}{n} = T_n(2,1)$, while the Delannoy number is $D_n = T_n(3,2)$.\\
For $n$ be a nonnegative integer and $x$ be an indeterminate, the Legendre polynomial $P_n(x)$ is defined by
\begin{align*}
P_n(x):=\sum_{k=0}^n\binom{n}{k}\binom{n+k}{k}\left(\frac{x-1}{2}\right)^k.
\end{align*}
It is well known that $T_n(b,c)$ is closely related to the Legendre polynomial $P_n(x)$ via the following formula (see \cite{noe2006,Sun2014CANT}):
\begin{equation*}
T_n(b,c) = (\sqrt{d})^n P_n\left(\frac{b}{\sqrt{d}}\right),
\end{equation*} 
where $d=b^2-4c\neq0$.

 Z.-W.~Sun~\cite{Sun2014CANT} investigated sums of the form
$\sum_{k=0}^{p-1}\binom{2k}{k}T_{k}(b,c)/m^k$, $\sum_{k=0}^{p-1}\binom{2k}{k}T_{2k}(b,c)/m^k$, $\sum_{k=0}^{p-1}\binom{2k}{k}^2T_{k}(b,c)/m^k$ and $\sum_{k=0}^{p-1}\binom{2k}{k}^2T_{2k}(b,c)/m^k$ modulo $p$ and $p^2$ and posed many conjectures involving the above forms. For example, Z.-W.~Sun \cite[Conjectures~2.2]{Sun2014CANT} conjectured that: Let $p > 3$ be a prime. Then
\begin{align}\label{suncnj2.2}
&\sum_{k=0}^{(p-1)/2}\binom{2k}{k}\dfrac{T_{2k}(2,3)}{16^k}\equiv \sum_{k=0}^{(p-1)/2}\binom{2k}{k}\dfrac{T_{2k}(4,-3)}{16^k}\nonumber \\
&\equiv \begin{cases}
( \frac{-1}{p})\left( \frac{x}{3}\right)(2x-\frac{2p}{x}) \pmod p^2 &\text{ if }p=x^2+3y^2,\\
0 \pmod p & \text{ if }p\equiv 2\pmod 3.
\end{cases}
 \end{align}

Note that \cite[Conjectures~2.1]{Sun2014CANT} were confirmed partially in~\cite{WangSun2020} and \cite{wang2026}. For other congruences involving generalized central trinomial coefficients,  see \cite{sunsci2014,otmani2026, otmani2026+} and references therein.

Motivated by the above, we establish the following general theorem on congruences involving a product of two $p$-adic sequences.
\begin{theorem}\label{theorem1} Let $p>3$ be a prime. Let $(a_k)_{k\geqslant 0}$ and $(b_k)_{k\geqslant 0}$ be any sequences of $p$-adic integers. Then
\begin{equation}\label{th1c1}
\sum^{(p-1)/2}_{k=0}\binom{2k}{k}^2\frac{a_{k}b_k}{16^k}\equiv\left( \frac{-1}{p}\right) \sum_{k=0}^{(p-1)/2}\sum_{i=0}^{(p-1)/2}\frac{\binom{2k}{k}^2\binom{2i }{i}^2}{\binom{i+k}{k}16 ^{k+i}}a_{k}^*b_i^*\pmod{p^2}. 
\end{equation}
Moreover,
\begin{align}\label{congCLFS}
\sum^{(p-1)/2}_{k=0}\binom{2k}{k}^2\frac{a_{2k}}{16^k}\equiv\left( \frac{-1}{p}\right) \sum_{k=0}^{p-1}\frac{\mathcal{P}_{k}}{16 ^{k}}a_{k}^*\pmod{p^2}. 
\end{align}
\end{theorem}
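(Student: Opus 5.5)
The plan is to reduce both congruences to the evaluation of a single double sum. I would expand $a_k$ and $b_k$ through their duals and then evaluate the resulting weighted sums of products of binomial coefficients modulo $p^2$.

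Throughout let $n=(p-1)/2$. Since $(a^*)^*=a$, we have $a_k=\sum_{j}\binom{k}{j}(-1)^j a_j^*$, and likewise for $b_k$. Substituting this into the left side of \eqref{th1c1} gives
\[
\sum_{k=0}^{n}\binom{2k}{k}^2\frac{a_kb_k}{16^k}=\sum_{i,j=0}^{n}(-1)^{i+j}a_j^*b_i^*\,S(i,j),\qquad S(i,j):=\sum_{k=0}^{n}\frac{\binom{2k}{k}^2}{16^k}\binom{k}{i}\binom{k}{j}.
\]
Only $i,j\le n$ occur, because $k\le n$. Hence \eqref{th1c1} follows once we prove
\[
S(i,j)\equiv\left(\frac{-1}{p}\right)(-1)^{i+j}\frac{\binom{2i}{i}^2\binom{2j}{j}^2}{\binom{i+j}{i}16^{i+j}}\pmod{p^2}\qquad(0\le i,j\le n).
\]
Note that $\binom{i+j}{i}$ is a $p$-adic unit here, since $i+j\le p-1$.

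To prove this I would first use the standard congruence
\[
\frac{\binom{2k}{k}^2}{16^k}\equiv(-1)^k\binom{n}{k}\binom{n+k}{k}\pmod{p^2}\qquad(0\le k\le n).
\]
It holds because $\binom{n}{k}\binom{n+k}{k}(-1)^k=\prod_{r=1}^{k}\bigl(r^2-\tfrac{p^2}{4}\bigr)/r^2\cdot\binom{2k}{k}^2/16^k$, and each factor $1-\tfrac{p^2}{4r^2}$ is $\equiv 1 \pmod{p^2}$ for $r\le n$. This replaces $S(i,j)$ by the exact sum
\[
\Sigma(i,j)=\sum_k(-1)^k\binom{n}{k}\binom{n+k}{k}\binom{k}{i}\binom{k}{j}.
\]
The main step, and the one I expect to be the real obstacle, is the closed-form identity
\[
\Sigma(i,j)=(-1)^{n+i+j}\frac{\binom{n}{i}\binom{n+i}{i}\binom{n}{j}\binom{n+j}{j}}{\binom{i+j}{i}}.
\]
It is consistent with $i=j=0$, where it reduces to $P_n(-1)=(-1)^n$. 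My first attempt would be the following. Write $\binom{n}{k}\binom{n+k}{k}\binom{k}{i}=\binom{n}{i}\binom{n+i}{i}\binom{n-i}{k-i}\binom{n+k}{n+i}$ and shift $k\mapsto k+i$. What remains is a terminating balanced ${}_3F_2(1)$ in $k$, and it should be summable by the Pfaff--Saalsch\"utz theorem. As a fallback, the left side satisfies the Legendre differential recurrence in $j$ (coming from $(1-x^2)P_n''-2xP_n'+n(n+1)P_n=0$ applied to $\binom{k}{j}$-extraction), so I would check that the right side satisfies the same first-order recurrence in $j$.

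Granting the identity, I would apply the same congruence to $\binom{n}{i}\binom{n+i}{i}$ and $\binom{n}{j}\binom{n+j}{j}$, which gives a factor $(-1)^{i+j}\binom{2i}{i}^2\binom{2j}{j}^2/16^{i+j}$. Together with $(-1)^n=\left(\frac{-1}{p}\right)$ this yields the required congruence for $S(i,j)$. After multiplying by $(-1)^{i+j}$, the signs cancel and the result is exactly the right side of \eqref{th1c1}.

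For \eqref{congCLFS} I would not specialize \eqref{th1c1}. Instead I would run the same computation directly. We have $a_{2k}=\sum_m\binom{2k}{m}(-1)^m a_m^*$, and by Vandermonde $\binom{2k}{m}=\sum_{i+j=m}\binom{k}{i}\binom{k}{j}$. So the coefficient of $a_m^*$ on the left is $(-1)^m\sum_{i+j=m,\ i,j\le n}S(i,j)$, and here $m\le 2n=p-1$. By the congruence for $S(i,j)$ this is
\[
\equiv\left(\frac{-1}{p}\right)16^{-m}\sum_{\substack{i+j=m\\ i,j\le n}}\frac{\binom{2i}{i}^2\binom{2j}{j}^2}{\binom{m}{i}}\pmod{p^2}.
\]
The definition \eqref{eq:CLF} of $\mathcal{P}_m$ contains the extra terms with $i>n$ or $j>n$. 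For $m\le p-1$ these terms vanish modulo $p^2$, because $p\mid\binom{2i}{i}$ when $n<i<p$, so $p^2\mid\binom{2i}{i}^2$, while $\binom{m}{i}$ is a unit. Hence the coefficient is $\equiv\left(\frac{-1}{p}\right)\mathcal{P}_m/16^m \pmod{p^2}$, which is \eqref{congCLFS}.
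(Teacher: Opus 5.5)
\textbf{Verdict: same route as the paper, but the key closed-form identity is misstated.} Your architecture matches the paper's proof. You replace $\binom{2k}{k}^2/16^k$ by $(-1)^k\binom{n}{k}\binom{n+k}{k}$ (Lemma~\ref{lemma2.3}), expand through the duals, evaluate the $k$-sum in closed form (Lemma~\ref{lemma2.2}), and for \eqref{congCLFS} split $\binom{2k}{m}$ by Vandermonde. Your remark that the terms of $\mathcal{P}_m$ with $i>n$ or $j>n$ vanish modulo $p^2$ is a detail the paper leaves implicit.

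\textbf{The sign error.} Your closed form for $\Sigma(i,j)$ carries a spurious factor $(-1)^{i+j}$. For $j=0$ the sum is the coefficient of $t^i$ in $P_n(-1-2t)=(-1)^nP_n(1+2t)$, namely $(-1)^n\binom{n}{i}\binom{n+i}{i}$. Concretely, $n=2$, $i=1$, $j=0$ gives $\Sigma=-6+12=6$, whereas your formula gives $-6$. The correct identity is
\[
\Sigma(i,j)=(-1)^{n}\,\frac{\binom{n}{i}\binom{n+i}{i}\binom{n}{j}\binom{n+j}{j}}{\binom{i+j}{i}} .
\]
Only this version, combined with $\binom{n}{i}\binom{n+i}{i}\equiv(-1)^i\binom{2i}{i}^2/16^i$, produces your target for $S(i,j)$, which is itself correct. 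From the identity as you stated it, the two factors $(-1)^{i+j}$ cancel each other, so you would get the target \emph{without} $(-1)^{i+j}$. Your final ``signs cancel'' step would then fail, so you reach the right statement only through a second, compensating sign slip.

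\textbf{The rewriting error.} Your proposed route to the identity starts from a false rewriting. The product $\binom{n}{i}\binom{n+i}{i}\binom{n-i}{k-i}\binom{n+k}{n+i}$ equals $\binom{n}{k}\binom{n+k}{k}\binom{k}{i}^2$, not $\binom{n}{k}\binom{n+k}{k}\binom{k}{i}$. For $n=k=2$, $i=1$ these are $24$ and $12$.

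\textbf{The repair} is the paper's proof of Lemma~\ref{lemma2.2}:
\begin{itemize}
\item Use $\binom{n}{k}\binom{k}{i}=\binom{n}{i}\binom{n-i}{n-k}$ and $\binom{n+k}{k}\binom{k}{j}=\binom{n+j}{j}\binom{n+k}{k-j}$.
\item Write $(-1)^k\binom{n+k}{k-j}=(-1)^j\binom{-n-1-j}{k-j}$.
\item The remaining sum $\sum_k\binom{n-i}{n-k}\binom{-n-1-j}{k-j}$ is a plain Chu--Vandermonde convolution, equal to $\binom{-1-i-j}{n-j}=(-1)^{n-j}\binom{n+i}{i+j}$. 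No Pfaff--Saalsch\"utz ${}_3F_2$ is needed.
\item This gives $(-1)^n\binom{n}{i}\binom{n+j}{j}\binom{n+i}{i+j}$, which is the display above.
\end{itemize}

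\textbf{A minor slip.} In your justification of the standard congruence, the correction factors are $1-p^2/(2r-1)^2$, since $(n+r)(n+1-r)=\bigl(p^2-(2r-1)^2\bigr)/4$. They are not $1-p^2/(4r^2)$. The conclusion is unaffected because $2r-1\le p-2$.

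With these corrections the rest goes through as you describe, including the unit $\binom{i+j}{i}$ and the reduction of \eqref{congCLFS}.
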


\begin{remark}
Taking $b_k=1$ for all $k$ in \eqref{th1c1}, we immediately get \eqref{eq:ZHSun}. Thus \eqref{th1c1} may be viewed as a $(b_k)_{k\ge0}$-parametrized extension of Sun's congruence \eqref{eq:ZHSun}. Other choices of $(b_k)_{k\ge0}$ produce new congruences similar in form to Sun's result; see Section \ref{section4}.
\end{remark}

\begin{remark}
For nonnegative integers $n,m$, the sequence $\binom{2n}{n}\binom{2m}{m}/\binom{m+n}{n}$ are integers, and they are called the Super Catalan numbers (see \cite{gessel}).
\end{remark}

Our second goal in this paper is to establish the following congruence.
\begin{theorem}\label{theorem2}  Let $p>3$ be a prime. Let $(a_k)_{k\geqslant 0}$ be any sequence of $p$-adic integers. Then
\begin{equation}\label{theorem2cong}
\sum_{k=0}^{p-1}\binom{2k}{k}\dfrac{a_{k}^*}{4^k}\equiv \dfrac{p}{4^{p-1}}\sum_{k=0}^{p-1}\frac{a_k}{2k+1}-p H_{\frac{p-1}{2}}a_{\frac{p-1}{2}}\pmod{p^2},
\end{equation}
where $H_k:=\sum_{i=1}^{k}1/i$ is the $k$-th harmonic number with the convention $H_0=0$. 
\end{theorem}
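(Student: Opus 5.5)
The plan is to swap the order of summation and evaluate the resulting inner sum in closed form. Since $a_k^*=\sum_{j\le k}\binom{k}{j}(-1)^ja_j$, the left-hand side of \eqref{theorem2cong} equals
\[
\sum_{j=0}^{p-1}(-1)^j a_j\, c_j,\qquad c_j:=\sum_{k=j}^{p-1}\frac{\binom{2k}{k}}{4^k}\binom{k}{j}.
\]
This reduces the theorem to a statement about the coefficients $c_j$ alone, which no longer involve the arbitrary sequence. It then suffices to show, for every $0\le j\le p-1$,
\[
(-1)^jc_j\equiv \frac{p}{4^{p-1}(2j+1)}\pmod{p^2}\quad (j\ne\tfrac{p-1}{2}),
\]
and $(-1)^jc_j\equiv 4^{1-p}-pH_{(p-1)/2}\pmod{p^2}$ for $j=\frac{p-1}{2}$.

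\emph{Closed form for $c_j$.} Write $\binom{2k}{k}/4^k=(-1)^k\binom{-1/2}{k}$. Then use the trinomial revision $\binom{-1/2}{k}\binom{k}{j}=\binom{-1/2}{j}\binom{-1/2-j}{k-j}$ and the partial alternating sum $\sum_{i=0}^{n}(-1)^i\binom{x}{i}=(-1)^n\binom{x-1}{n}$ with $n=p-1-j$. Rewriting everything in Pochhammer symbols, the factors telescope, since $(1/2)_j\,(j+3/2)_{p-1-j}=(1/2)_p/(j+1/2)$. Up to a sign that I will track carefully,
\[
(-1)^jc_j=\frac{2\,(1/2)_p}{(2j+1)\,(p-1)!}\,(-1)^j\binom{p-1}{j}.
\]
Next I simplify the $j$-independent factor: $(1/2)_p=(2p)!/(4^pp!)$, so $2(1/2)_p/(p-1)!=2p\binom{2p}{p}/4^p$. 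This step of getting an exact closed form for $c_j$ is the main obstacle; once it is in hand, the rest is routine $p$-adic bookkeeping.

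\emph{Reduction mod $p^2$.} I use Wolstenholme's theorem $\binom{2p}{p}\equiv2\pmod{p^3}$ together with $(-1)^j\binom{p-1}{j}\equiv1-pH_j\pmod{p^2}$.

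For $j\ne\frac{p-1}{2}$, the number $2j+1$ is a $p$-adic unit. The prefactor is $p\cdot(\text{unit})$, so the $pH_j$ correction disappears modulo $p^2$. This leaves $(-1)^jc_j\equiv \frac{4p}{4^p(2j+1)}=\frac{p}{4^{p-1}(2j+1)}$.

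For $j=\frac{p-1}{2}$ we have $2j+1=p$, which cancels the factor $p$. The coefficient becomes $\frac{\binom{2p}{p}}{2\cdot 4^{p-1}}\bigl(1-pH_{(p-1)/2}\bigr)\equiv 4^{1-p}-pH_{(p-1)/2}\pmod{p^2}$, using $4^{p-1}\equiv1\pmod p$ to drop the denominator on the $pH$ term.

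The term $4^{1-p}a_{(p-1)/2}$ is exactly the $k=\frac{p-1}{2}$ summand of $\frac{p}{4^{p-1}}\sum_k\frac{a_k}{2k+1}$. Summing over $j$ then gives \eqref{theorem2cong}. The final check is to confirm the overall sign in the closed form, for instance against the case $a_k=\delta_{k,0}$.
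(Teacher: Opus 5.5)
Your proposal is correct and follows the paper's proof. Your closed form for $c_j$ is exactly identity \eqref{lemma2.2+} with $n=p-1$; the paper gets it via Zeilberger's algorithm rather than trinomial revision and the partial alternating sum, and then makes the same reduction using $\binom{2p}{p}\equiv 2\pmod{p^2}$ and \eqref{eq:binom1}. Your explicit treatment of $j=\frac{p-1}{2}$ is a step the paper leaves implicit: there $2j+1=p$ cancels the prefactor $p$, so the full mod-$p^2$ strength of $\binom{2p}{p}\equiv 2$ is needed.
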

Furthermore, we establish the following result.
\begin{theorem}\label{theorem3}
Let $p>3$ be a prime. Let $(a_k)_{k\geqslant 0}$ be any sequence of $p$-adic integers. Then
\begin{equation}\label{th3cng}
\sum_{k=0}^{(p-1)/2}\binom{2k}{k}\dfrac{a_{2k}^*}{4^k}\equiv p \left( \frac{-1}{p}\right)  \sum_{k=0}^{p-1}\frac{2^k}{(2k+1)\binom{2k}{k}}a_k\pmod{p^2}.
\end{equation}
\end{theorem}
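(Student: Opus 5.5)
By linearity in $(a_k)$, it suffices to prove \eqref{th3cng} for $a_k=\delta_{k,j}$, one $j$ at a time. Since $2k\le p-1$ on the left, only $0\le j\le p-1$ matter, and then $a^*_{2k}=(-1)^j\binom{2k}{j}$. So the claim reduces to
\[
S_j:=(-1)^j\sum_{k=0}^{n}\binom{2k}{k}\frac{\binom{2k}{j}}{4^k}\equiv p\left(\frac{-1}{p}\right)\frac{2^j}{(2j+1)\binom{2j}{j}}\pmod{p^2},\qquad n:=\frac{p-1}{2},
\]
for each $0\le j\le p-1$.

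The right-hand side behaves differently in three ranges. It is $\equiv0\pmod p$ for $j<n$. At $j=n$ the factor $2j+1=p$ cancels and the value is a $p$-adic unit. For $n<j\le p-1$, $p\mid\binom{2j}{j}$ exactly once, so the value is again a unit times a $p$-power of exponent $0$. I will check that $S_j$ reproduces this pattern.

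\textbf{Main term.} Write $\binom{2k}{k}/4^k=(-1)^k\binom{-1/2}{k}$ and compare with $(-1)^k\binom{n}{k}$, using $n=-\tfrac12+\tfrac p2$. Put
\[
f_j(x):=\sum_{k=0}^{n}(-1)^k\binom{x}{k}\binom{2k}{j},
\]
a polynomial in $x$. Since $\binom{x}{k}=0$ for $k>n$ at $x=n$, and $\binom{-1/2}{k}\equiv0\pmod p$ for $n<k\le p-1$, we get $S_j=(-1)^jf_j(-\tfrac12)$. Taylor expansion then gives
\[
S_j\equiv(-1)^j\Bigl(f_j(n)-\tfrac p2 f_j'(n)\Bigr)\pmod{p^2},
\]
because $f_j$ has $p$-integral coefficients for $j\le p-1$ after clearing denominators coprime to $p$. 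The value $f_j(n)$ is exact: from $\sum_k\binom nk(-1)^k(1+t)^{2k}=(-t)^n(2+t)^n$, taking the coefficient of $t^j$ gives
\[
f_j(n)=(-1)^n\binom{n}{j-n}2^{2n-j}.
\]
This vanishes for $j<n$ and is a unit for $n\le j\le p-1$, which matches the pattern above. For $j\ge n$ I then need to compare $(-1)^{j+n}\binom{n}{j-n}2^{2n-j}$ with $p(-1/p)2^j/((2j+1)\binom{2j}{j})$ modulo $p$ (resp.\ modulo $p^2$). Writing $j=n+m$ and using $\binom{2j}{j}\equiv$ ($p$ times a product of reciprocals) via Lucas-type expansions of $(2j)!/j!^2$ should give this, together with $(-1)^n=(-1/p)$ and $2^{p-1}\equiv1\pmod p$.

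\textbf{Derivative term (main obstacle).} I must evaluate $f_j'(n)$ modulo $p$, and show it is compatible with the $p^2$-level information on the right-hand side. To get $f_j'(n)$, I differentiate $\binom{x}{k}$, which gives $\binom{x}{k}\sum_{i<k}1/(x-i)$. At $x=n$ this yields harmonic-type sums
\[
\sum_k(-1)^k\binom nk\binom{2k}{j}\bigl(H_n-H_{n-k}\bigr).
\]
I would first treat this through the generating function $\sum_k\binom nk(-1)^k(1+t)^{2k}u^k$, differentiated at the appropriate point, or equivalently through the identity $\frac{d}{dx}(1-(1+t)^2)^x=(1-(1+t)^2)^x\log(-t(2+t))$ truncated. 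I expect a closed form involving $H_n$ and $H_{j-n}$. This must be combined with the correction from expanding $1/\binom{2j}{j}$ to second order, which has the form $\binom{2j}{j}=p\cdot(\text{unit})\cdot(1+p(\cdots))$ for $j>n$, and with $4^{n}=2^{p-1}$ appearing via Fermat quotients.

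Matching the harmonic and Fermat-quotient terms on both sides, in particular $H_n\equiv-2q_p(2)\pmod p$, is the delicate bookkeeping step. For $j<n$, only the derivative term survives, and it has to produce $p\,(-1/p)2^j/((2j+1)\binom{2j}{j})$ modulo $p^2$. I would first verify this case numerically and then try to prove it via the identity $\sum_{k}\binom{-1/2}{k}(-1)^k\binom{2k}{j}$ truncated, related to $\int_0^1$-type (beta) evaluations giving $2^j/((2j+1)\binom{2j}{j})$.
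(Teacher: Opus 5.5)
Your reduction to $a_k=\delta_{k,j}$ is correct, and so are the identity $S_j=(-1)^jf_j(-\tfrac12)$, the first-order Taylor step (valid because $f_j$ has $p$-integral coefficients), and the exact value $f_j(n)=(-1)^n\binom{n}{j-n}2^{2n-j}$. However, the proposal stops exactly where the content of the theorem lies. You never evaluate $f_j'(n)\bmod p$, i.e.\ $\sum_{k\le n}(-1)^k\binom nk\binom{2k}{j}(H_n-H_{n-k})$. For $j<n$ we have $f_j(n)=0$, so the entire right-hand side $p\left(\frac{-1}{p}\right)2^j/((2j+1)\binom{2j}{j})$ must come from $-\tfrac{p}{2}f_j'(n)$. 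For $j\ge n$ the agreement modulo $p^2$ also depends on it: already at $j=n$, using $\binom{2n}{n}\equiv(-1)^n(1-pH_n)\pmod{p^2}$, you would need $f_n'(n)\equiv-2(-2)^nH_n\pmod p$. The comparison for $j\ge n$ modulo $p$ is likewise only asserted ("should give this"). Moreover, the route you sketch for $f_j'(n)$ does not work as stated. Your $f_j$ is a truncated sum, and for $k>n$ the derivative of $\binom{x}{k}$ at $x=n$ equals $(-1)^{k-1-n}n!\,(k-1-n)!/k!\neq0$. So differentiating $(1-(1+t)^2)^x$ in $x$ and then truncating does not produce $f_j'(n)$. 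As written, this is a plan whose key step is missing.

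The idea that removes the derivative entirely, and which the paper uses, is to compare $\binom{-1/2}{k}$ with the product $\binom nk\binom{-1-n}{k}$ rather than with $\binom nk$ alone, which agrees only modulo $p$. The product is symmetric about $-\tfrac12$: since $(n-i)(-1-n-i)=(i+\tfrac12)^2-\tfrac{p^2}{4}$, one gets $\binom{2k}{k}^2/16^k\equiv(-1)^k\binom nk\binom{n+k}{k}\pmod{p^2}$. Dividing by the unit $\binom{2k}{k}/4^k$ then gives $\binom{2k}{k}/4^k\equiv(-4)^k\binom{n+k}{2k}\pmod{p^2}$ for $0\le k\le n$ (Lemma~\ref{lemma2.3}). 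The first-order terms cancel, and the new summand is exactly summable. By Zeilberger's algorithm (Lemma~\ref{lemma2.1}), $\sum_{k=0}^{n}\binom{n+k}{2k}\binom{2k}{j}(-4)^k=(-1)^n p\,2^j\binom{p-1}{j}\binom{p+j}{j}/((2j+1)\binom{2j}{j})$. For $0\le j\le p-1$, the factor $p/((2j+1)\binom{2j}{j})$ is $p$-integral, and $\binom{p-1}{j}\binom{p+j}{j}\equiv(-1)^j(1-pH_j)(1+pH_j)\equiv(-1)^j\pmod{p^2}$ (Lemma~\ref{lemma2.4}). Together these give $S_j\equiv p\left(\frac{-1}{p}\right)2^j/((2j+1)\binom{2j}{j})\pmod{p^2}$ in all three ranges of $j$ at once, with no harmonic-number or Fermat-quotient bookkeeping.
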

\begin{remark}
Together with some further identities, Theorem \ref{theorem3} yields a
key congruence, Theorem \ref{thm5.6}, confirming \eqref{suncnj2.2}
modulo $p$, and confirming the remaining parts of Conjecture 2.2, together
with Conjecture 2.3, completely, as posed by Z.-W. Sun
\cite{Sun2014CANT}.
\end{remark}

The rest of this paper is organized as follows. In Section \ref{section2}, we prove our main theorems. In Section \ref{section3}, by applying \eqref{congCLFS} and Theorem \ref{theorem2}, we establish some new congruences involving the Catalan--Larcombe--French sequence and the generalized central trinomial coefficients. In Section \ref{section4}, we specialize the $p$-adic sequence $(b_k)_{k\ge0}$ in the congruence \eqref{th1c1} to obtain some new congruences involving other sequences $(a_k)_{k\ge0}$. In Section \ref{section5}, with the help of Theorem \ref{theorem3}, we establish some congruences involving $T_{2k}(b,c)$, and as a consequence we confirm some conjectures of Sun \cite{Sun2014CANT}. Our proofs make use of several combinatorial identities derived by different methods, including the symbolic summation package \texttt{Sigma} (Schneider \cite{sch2007}), Zeilberger's algorithm \cite{PWZ}, and Riordan array transformation \cite{spr1994}.

\section{Proofs of Mains Theorems}\label{section2}
We first collect some lemmas needed in the proofs of our main results.

\begin{lemma}\label{lemma2.1}  Let $n$ and $i$ be nonnegative integers. Then
\begin{align}\label{identity2k}
\sum_{k=0}^{n}\binom{n+k}{2k}\binom{2k}{i}(-4)^k &=(-1)^{n}(2n+1)2^i\dfrac{\binom{2n}{i}\binom{2n+1+i}{i}}{(2i+1)\binom{2i}{i}},\\
\label{lemma2.2+}
\sum_{k=i}^{n}\binom{2k}{k}\binom{k}{i}\dfrac{1}{4^k}&=\frac{n+1}{2^{2n+1}(2i+1)}\binom{n}{i}\binom{2(n+1)}{n+1}.
\end{align}
\end{lemma}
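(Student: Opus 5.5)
We treat the two identities separately: \eqref{lemma2.2+} is a one-line telescoping induction, and \eqref{identity2k} is a Zeilberger-type verification.

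\textbf{Proof of \eqref{lemma2.2+}.} Fix $i$ and induct on $n\ge i$; we will also use $n=i-1$ as an extended base case. Denote by $R_n$ the right-hand side. The identity $(n+1)\binom{2n+2}{n+1}=2(2n+1)\binom{2n}{n}$ rewrites it as
\[
R_n=\frac{(2n+1)\binom{2n}{n}\binom{n}{i}}{4^n(2i+1)}.
\]
\begin{itemize}
\item \emph{Base case.} $R_{i-1}=0$ because $\binom{i-1}{i}=0$, and the left-hand side is an empty sum when $n=i-1$.
\item \emph{Inductive step.} It suffices to show $R_n-R_{n-1}=\binom{2n}{n}\binom{n}{i}/4^n$. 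Using $\binom{2n-2}{n-1}=\frac{n}{2(2n-1)}\binom{2n}{n}$ we get
\[
R_{n-1}=\frac{2n\binom{2n}{n}\binom{n-1}{i}}{4^n(2i+1)}.
\]
Then $n\binom{n-1}{i}=(n-i)\binom{n}{i}$ gives
\[
R_n-R_{n-1}=\frac{\binom{2n}{n}\binom{n}{i}}{4^n(2i+1)}\bigl((2n+1)-2(n-i)\bigr)=\frac{\binom{2n}{n}\binom{n}{i}}{4^n}.
\]
\end{itemize}
This closes the induction.

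\textbf{Proof of \eqref{identity2k}.} Fix $n$ and regard both sides as functions of $i$. Write $L(i)=\sum_k F(i,k)$ with summand $F(i,k)=\binom{n+k}{2k}\binom{2k}{i}(-4)^k$, and let $R(i)$ be the closed form.
\begin{itemize}
\item \emph{Step 1.} Compute the rational function $R(i+1)/R(i)$ directly. It has the form $2\frac{(2n-i)(2n+2+i)}{(i+1)\cdot\text{(linear)}}\cdot\frac{(2i+1)(i+1)}{(2i+3)(2i+2)}$ up to simplification, so $R$ satisfies a first-order recurrence in $i$ with polynomial coefficients.
\item \emph{Step 2.} Run Zeilberger's algorithm (creative telescoping in $k$) on $F(i,k)$. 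This should produce the same first-order recurrence for $L(i)$, with a rational certificate $G(i,k)$ satisfying
\[
p_0(i)F(i,k)+p_1(i)F(i+1,k)=G(i,k+1)-G(i,k).
\]
The boundary terms vanish, since $F$ is supported on $0\le k\le n$ and the certificate inherits this support. Summing over $k$ then gives the recurrence for $L$.
\item \emph{Step 3.} Check the initial value $i=0$. Here $L(0)=\sum_k\binom{n+k}{2k}(-4)^k$ is the classical evaluation $(-1)^n(2n+1)$: it is the value at $\theta=\pi/2$ of $\cos((2n+1)\theta)/\cos\theta$, written as a polynomial in $\sin^2\theta$. It also follows from the three-term recurrence $s_{n+1}=-2s_n-s_{n-1}$. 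This matches $R(0)=(-1)^n(2n+1)$.
\end{itemize}
One more check is needed: the leading coefficient $p_1(i)$ of the recurrence must not vanish in the range $0\le i\le 2n$. For $i>2n$ both sides are $0$.

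\textbf{Expected obstacle.} The main effort is Step 2: actually producing and verifying the certificate $G(i,k)$. If the recurrence in $i$ turns out to be awkward, I would first try a generating-function route instead. Multiply by $x^i$ and sum over $i$, so that $\sum_i\binom{2k}{i}x^i=(1+x)^{2k}$. Then evaluate $\sum_k\binom{n+k}{2k}\bigl(-4(1+x)^2\bigr)^k$ as a Chebyshev-type expression $U_{2n}\bigl(\sqrt{1+\cdot}\bigr)$. Finally extract the coefficient of $x^i$ and compare it with $R(i)$.
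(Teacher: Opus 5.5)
Your proposal is correct.

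\textbf{Comparison with the paper.} For \eqref{identity2k} you follow the paper's route: a first-order recurrence in $i$ obtained from Zeilberger's algorithm, plus the initial value $z_0=(-1)^n(2n+1)$. The paper quotes Gould's (1.72) for that initial value. Your three-term recurrence $s_{n+1}=-2s_n-s_{n-1}$ is an equally good, self-contained substitute.

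For \eqref{lemma2.2+} you genuinely differ. The paper only says it follows ``by a similar argument'', i.e.\ another machine-found recurrence. Your induction on $n$ via $R_n-R_{n-1}=\binom{2n}{n}\binom{n}{i}4^{-n}$ is a complete hand-checkable proof with no certificate needed, so it is the better argument. One small fix: your base-case justification $\binom{i-1}{i}=0$ fails for $i=0$, since $\binom{-1}{0}=1$. Simply start the induction at $n=i$, where both sides equal $\binom{2i}{i}4^{-i}$.

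\textbf{Step 1 of \eqref{identity2k}.} Your displayed ratio is garbled. It actually simplifies to
\[
\frac{R(i+1)}{R(i)}=\frac{(2n-i)(2n+i+2)}{(i+1)(2i+3)}.
\]
Note that the recurrence printed in the paper, $2(i-n)(1+i+n)z_i+(1+i)(3+2i)z_{i+1}=0$, is a misprint. At $n=1$, $i=0$ it would give $z_1=-4$, whereas in fact $z_1=-8$.

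\textbf{Step 2 of \eqref{identity2k}.} You leave this as ``should produce'', but it can be done by hand. Put $F(i,k)=\binom{n+k}{2k}\binom{2k}{i}(-4)^k$ and
\[
G(i,k)=2k(2k-1)\binom{n+k}{2k}\binom{2k-2}{i}(-4)^k .
\]
Then $G(i,k)=(2k-i)(2k-i-1)F(i,k)$ and $G(i,k+1)=-4(n+k+1)(n-k)F(i,k)$. Using also $F(i+1,k)=\frac{2k-i}{i+1}F(i,k)$, one gets
\[
(i+1)(2i+3)F(i+1,k)-(2n-i)(2n+i+2)F(i,k)=G(i,k+1)-G(i,k),
\]
since both sides equal $F(i,k)\bigl(2k(2i+3)-i(i+1)-4n(n+1)\bigr)$.

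Moreover $G(i,0)=G(i,n+1)=0$ explicitly. So summing over $0\le k\le n$ gives
\[
(i+1)(2i+3)L(i+1)=(2n-i)(2n+i+2)L(i)\quad\text{for all } i\ge 0.
\]
The leading coefficient $(i+1)(2i+3)$ never vanishes, so $L(0)=R(0)$ finishes the proof. No separate treatment of $i>2n$ is needed.
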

\begin{proof}We prove only \eqref{identity2k}, since \eqref{lemma2.2+} can be established by a similar argument. Let 
\begin{align*}
z_i:= \sum_{k=0}^{n}\binom{n+k}{2k}\binom{2k}{i}(-4)^k.
\end{align*}
By Zeilberger's algorithm \cite{PWZ}, the sequence $(z_i)_i$ satisfies the following recurrence relation
\begin{equation*}
2 (i-n) (1+i+n) z_{i}+(1+i) (3+2 i)z_{i+1}=0.
\end{equation*}
Solving the above recurrence relation, one obtains
\begin{align}\label{ziz0}
z_i=z_02^i\frac{\binom{2n}{i}\binom{2n+1+i}{i}}{(2i+1)\binom{2i}{i}}.
\end{align}
In view of \cite[(1.72)]{gould}, we have
\begin{align*}
z_0&=\sum_{k=0}^{n}\binom{n+k}{2k}(-4)^k=\sum_{k=0}^{n}\binom{2n-k}{k}(-4)^{n-k}\\
&=(-1)^n(2n+1).
\end{align*}
Hence, substituting this into \eqref{ziz0} yields the desired result.
\end{proof}

\begin{lemma}\label{lemma2.2} Let $n,i,j$ be nonnegative integers. Then

\begin{align*}
\sum_{k=\max\left( i,j\right) }^{n}\binom{n}{k}\binom{n+k}{k}\binom{k}{i}\binom{k}{j}\left( -1\right) ^k=\left( -1\right) ^{n}\binom{n}{i}\binom{n+j}{j}\binom{n+i}{i}\binom{n}{j}\binom{j+i}{j}^{-1}.
\end{align*}

\end{lemma}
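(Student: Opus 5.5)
Without loss of generality I take $i\le j$; the summand and the right-hand side are both symmetric in $(i,j)$, since $\binom{j+i}{j}=\binom{i+j}{i}$. I will fix $n$ and $i$ and treat the sum
\[
S_j:=\sum_{k}\binom{n}{k}\binom{n+k}{k}\binom{k}{i}\binom{k}{j}(-1)^k
\]
as a sequence in $j$. The plan mirrors the proof of Lemma \ref{lemma2.1}.

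\textbf{Step 1: a recurrence in $j$.} Run Zeilberger's algorithm \cite{PWZ} on $S_j$ with respect to $j$. The summand is hypergeometric in $(k,j)$ and its natural support is $k\ge\max(i,j)$, so the sum can be taken over all $k$. I expect a first-order recurrence
\[
A(j)S_{j+1}+B(j)S_j=0 .
\]
The claimed right-hand side gives the ratio
\[
\frac{S_{j+1}}{S_j}=\frac{(n-j)(n+j+1)}{(j+1)^2}\cdot\frac{j+1}{i+j+1}=\frac{(n-j)(n+j+1)}{(j+1)(i+j+1)}.
\]
So the recurrence should read
\[
(j+1)(i+j+1)S_{j+1}=(n-j)(n+j+1)S_j.
\]
This can be checked by producing the certificate $R(k,j)$ and verifying the telescoping identity. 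Alternatively, one can guess the rational certificate and verify it directly: after dividing by the summand, this is a polynomial identity in $k,j,n,i$. A convenient check is to apply $\binom{k}{j+1}=\binom{k}{j}\frac{k-j}{j+1}$ and reduce the claim to
\[
\sum_k t_k\bigl[(i+j+1)(k-j)-(n-j)(n+j+1)\bigr]=0,
\]
where $t_k$ is the summand of $S_j$, and then find $G(k)$ with $\Delta_k G$ equal to that bracket times $t_k$.

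\textbf{Step 2: the initial value $S_0$.} Setting $j=0$ gives
\[
S_0=\sum_k\binom{n}{k}\binom{n+k}{k}\binom{k}{i}(-1)^k .
\]
The claim is $S_0=(-1)^n\binom{n}{i}\binom{n+i}{i}$. This is the classical Legendre-polynomial expansion: write $\binom{n}{k}\binom{n+k}{k}\binom{k}{i}=\binom{n}{i}\binom{n+i}{i}\binom{n-i}{k-i}\binom{n+k}{n+i}$. After the shift $k=i+m$, the sum reduces to $\sum_m\binom{n-i}{m}\binom{n+i+m}{m}(-1)^m$. This is a Chu--Vandermonde sum equal to $(-1)^{n-i}$. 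One way to see this is to use $\binom{n+i+m}{m}(-1)^m=\binom{-n-i-1}{m}$, which gives
\[
\sum_m\binom{n-i}{m}\binom{-n-i-1}{m}=\binom{-2i-1}{n-i}=(-1)^{n-i}\binom{n+i}{n-i}.
\]
That is not $(-1)^{n-i}$ alone, so I will redo the bookkeeping carefully. The correct factorization is
\[
\binom{n}{k}\binom{n+k}{k}\binom{k}{i}=\binom{n+i}{i}\binom{n}{i}\binom{n-i}{k-i}\binom{n+k}{k-i}\Big/\binom{n+i}{2i}\cdots
\]
In practice I will simply run Zeilberger in $i$ for $S_0$, exactly as in Lemma \ref{lemma2.1}, with starting value $\sum_k\binom{n}{k}\binom{n+k}{k}(-1)^k=(-1)^n$. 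That starting value is $P_n(-1)=(-1)^n$.

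\textbf{Step 3: solve the recurrence and compare.} Iterating the Step 1 recurrence from $S_0$ gives
\[
S_j=S_0\prod_{t=0}^{j-1}\frac{(n-t)(n+t+1)}{(t+1)(i+t+1)}=S_0\,\frac{\binom{n}{j}\binom{n+j}{j}\,j!\,i!}{(i+j)!},
\]
which is exactly the stated product. For $j>n$ both sides vanish, because of the factor $(n-j)$ and the factor $\binom{n}{j}$.

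\textbf{Main obstacle.} The main obstacle is obtaining and certifying the recurrences. If the symbolic approach is awkward, I would instead try a direct route: use $\binom{k}{i}\binom{k}{j}=\sum_l\binom{i+j-l}{\,\cdot\,}\cdots$ (the Vandermonde-type linearization), which reduces the problem to single-binomial Legendre sums. However, the Zeilberger route with explicit certificates is the one I would commit to.
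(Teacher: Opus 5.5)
The approach could work in principle, but as written neither of your two load-bearing steps is established.

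\textbf{Step~1 is not proved.} You obtain the recurrence $(j+1)(i+j+1)S_{j+1}=(n-j)(n+j+1)S_j$ by taking the ratio of consecutive values of the \emph{claimed} right-hand side. That only shows the target formula is compatible with such a recurrence, not that the sum $S_j$ satisfies it. To make this a proof you must exhibit the certificate $R(k,j)$ (or the $G$ in your reformulation), verify the telescoping relation, and check that the boundary terms vanish. None of this is supplied.

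\textbf{Step~2 uses a false factorization.} Since $\binom{n+i}{i}\binom{n+k}{n+i}=\binom{n+k}{k}\binom{k}{i}$, your right-hand side equals $\binom{n}{k}\binom{n+k}{k}\binom{k}{i}^2$, which has one $\binom{k}{i}$ too many. What you computed is therefore the diagonal case $j=i$, not $S_0$. Your value $(-1)^n\binom{n}{i}\binom{n+i}{i}\binom{n+i}{2i}$ is in fact correct for $j=i$, which explains the mismatch you noticed. You then leave the initial value to another unexecuted Zeilberger run, so it remains unproved.

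\textbf{The repair.} Use the ``direct route'' you mention at the end; it makes the recurrence unnecessary, and it is essentially the paper's proof. Absorb $\binom{k}{i}$ into $\binom{n}{k}$ only, and $\binom{k}{j}$ into $\binom{n+k}{k}$ only (after upper negation):
\[
\binom{n}{k}\binom{k}{i}=\binom{n}{i}\binom{n-i}{n-k},\qquad (-1)^k\binom{n+k}{k}\binom{k}{j}=\binom{-n-1}{k}\binom{k}{j}=(-1)^j\binom{n+j}{j}\binom{-n-1-j}{k-j}.
\]
Chu--Vandermonde then gives $\sum_k\binom{n-i}{n-k}\binom{-n-1-j}{k-j}=\binom{-i-j-1}{n-j}=(-1)^{n-j}\binom{n+i}{i+j}$. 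So the sum equals $(-1)^n\binom{n}{i}\binom{n+j}{j}\binom{n+i}{i+j}$, and $\binom{n+i}{i+j}=\binom{n+i}{i}\binom{n}{j}\binom{i+j}{j}^{-1}$ finishes the proof. Taking $j=0$ gives exactly the value $S_0=(-1)^n\binom{n}{i}\binom{n+i}{i}$ you were after.

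Your Step~3 algebra is correct, so with a genuine certificate the Zeilberger route would also work. It costs a machine-verified recurrence plus a separate initial-value evaluation to reach what one Vandermonde sum gives directly.
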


\begin{proof}
For nonnegative integers $n,k$, and an indeterminate $x$, recall that 
\begin{align*}
\binom{x}{n}\binom{n}{k}=\binom{x}{k}\binom{x-k}{n-k}\quad \text{and}\quad \binom{-1-x}{k}=\binom{x+k}{k}(-1)^k.
\end{align*}
With the help of the above binomial identities we get
\begin{align*}
\sum_{k=\max\left( i,j\right) }^{n}\binom{n}{k}\binom{n+k}{k}\binom{k}{i}\binom{k}{j}\left( -1\right) ^k&=\binom{n}{i}\sum_{k=\max\left( i,j\right) }^{n}\binom{n-i}{k-i}\binom{n+k}{k}\binom{k}{j}\left( -1\right) ^k\\
&=\binom{n}{i}\binom{n+j}{j}\left( -1\right) ^{j}\sum_{k=\max\left( i,j\right) }^{n}\binom{n-i}{k-i}\binom{-n-1-j}{k-j}\\
&=\binom{n}{i}\binom{n+j}{j}\left( -1\right) ^{j}\sum_{k=\max\left( i,j\right) }^{n}\binom{n-i}{k-i}\binom{n+k}{k-j}\left( -1\right) ^{k+j}\\
&=\binom{n}{i}\binom{n+j}{j}\left( -1\right) ^{j}\sum_{k=0 }^{n}\binom{n-i}{k}\binom{n+i+k}{n+j}\left( -1\right) ^{k+i+j}.
\end{align*}
Applying \cite[(3.47)]{gould}, we obtain
\begin{align*}
\sum_{k=\max\left( i,j\right) }^{n}\binom{n}{k}\binom{n+k}{k}\binom{k}{i}\binom{k}{j}\left( -1\right) ^k&=\left( -1\right) ^{n}\binom{n}{i}\binom{n+j}{j}\binom{n+i}{i+j}\\
&=\left( -1\right) ^{n}\binom{n}{i}\binom{n+j}{j}\binom{n+i}{i}\binom{n}{j}\binom{j+i}{j}^{-1}.
\end{align*}
As desired.
\end{proof}

\begin{lemma}\label{lemma2.4}
Let $p$ be an odd prime and $k$ be a nonnegative integer.  If $k\in \{ 0,1,2,\ldots,p-1\}$, then
\begin{align}
\binom{p-1}{k}&\equiv (-1)^k(1-pH_k)\pmod{p^2}, \label{eq:binom1}\\
\binom{p+k}{k}&\equiv 1+pH_k \pmod{p^2}. \label{eq:binom2}
\end{align}
\end{lemma}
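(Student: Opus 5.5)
The plan is to expand each binomial coefficient as a product and reduce that product modulo $p^2$. Every factor involved will have the form $1 \pm p/j$ with $1\le j\le k\le p-1$, so each such $j$ is a $p$-adic unit.

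For \eqref{eq:binom1}, I would start from
\[
\binom{p-1}{k}=\prod_{j=1}^{k}\frac{p-j}{j}=(-1)^k\prod_{j=1}^{k}\Bigl(1-\frac{p}{j}\Bigr).
\]
Expanding the product, every term that contains two or more factors $p/j$ is divisible by $p^2$. Since $j\le k\le p-1$, each $1/j$ is $p$-integral, so these terms vanish modulo $p^2$. Only the linear part survives, which gives
\[
\prod_{j=1}^{k}\Bigl(1-\frac{p}{j}\Bigr)\equiv 1-p\sum_{j=1}^{k}\frac1j=1-pH_k\pmod{p^2}.
\]
When $k=0$ the product is empty and both sides equal $1$, consistent with the convention $H_0=0$.

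For \eqref{eq:binom2}, the same method applies to
\[
\binom{p+k}{k}=\prod_{j=1}^{k}\frac{p+j}{j}=\prod_{j=1}^{k}\Bigl(1+\frac{p}{j}\Bigr)\equiv 1+pH_k\pmod{p^2}.
\]
The only point that needs care is the $p$-integrality of $H_k$ and of the elementary symmetric sums of the $1/j$. This holds because $k\le p-1$, so no denominator $j$ is divisible by $p$, and I expect no real obstacle anywhere in the argument.
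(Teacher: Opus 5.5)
Your proposal is correct and follows the paper's proof essentially verbatim: you write $\binom{p-1}{k}=(-1)^k\prod_{j=1}^{k}(1-p/j)$ and $\binom{p+k}{k}=\prod_{j=1}^{k}(1+p/j)$ and discard all terms of order $p^2$, which is legitimate because every $j\le k\le p-1$ is a $p$-adic unit. Your explicit remarks on $p$-integrality and on the empty-product case $k=0$ are details the paper leaves implicit.
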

\begin{proof}
By definition of the binomial coefficient,
\begin{align*}
\binom{p-1}{k} &= \prod_{j=1}^{k}\frac{p-j}{j} 
= (-1)^k\prod_{j=1}^{k}\left(1-\frac{p}{j}\right)\\
&\equiv  (-1)^k(1-pH_k)\pmod{p^2}.
\end{align*}
Similarly,
\begin{align*}
\binom{p+k}{k}& = \prod_{j=1}^{k}\frac{p+j}{j} 
= \prod_{j=1}^{k}\left(1+\frac{p}{j}\right)\\
&\equiv 1+pH_k \pmod{p^2}.
\end{align*}
This completes the proof.
\end{proof}

\begin{lemma}\label{lemma2.3} Let $p=2n+1$ be a prime. Then
\begin{align*}
\binom{n+k}{2k}\equiv \binom{2k}{k}\dfrac{1}{(-16)^k}\pmod{p^2}.
\end{align*}
\end{lemma}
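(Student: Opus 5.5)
The plan is to write $\binom{n+k}{2k}$ as a product and compare it factor by factor with $\binom{2k}{k}/(-16)^k$, using $p=2n+1$, i.e.\ $n=(p-1)/2$. We may assume $0\le k\le n$: for $k>n$ the left side is $0$, while for $n<k<p$ the right side $\binom{2k}{k}$ is divisible by $p$ (only modulo $p$, not $p^2$). So the statement is really meant for $0\le k\le n$, which is the range used later.

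For $0\le k\le n$ we have
\begin{align*}
\binom{n+k}{2k}=\frac{(n+k)(n+k-1)\cdots(n-k+1)}{(2k)!}=\frac{\prod_{j=1}^{k}(n+j)(n-j+1)}{(2k)!}.
\end{align*}
Pair the factors as $(n+j)(n+1-j)$ and substitute $n=(p-1)/2$. Then
\begin{align*}
(n+j)(n+1-j)=\frac{(p+2j-1)(p-2j+1)}{4}=\frac{p^2-(2j-1)^2}{4}\equiv -\frac{(2j-1)^2}{4}\pmod{p^2}.
\end{align*}
Hence
\begin{align*}
\prod_{j=1}^{k}(n+j)(n+1-j)\equiv \frac{(-1)^k}{4^k}\prod_{j=1}^{k}(2j-1)^2=\frac{(-1)^k}{4^k}\left(\frac{(2k)!}{2^k k!}\right)^2 \pmod{p^2}.
\end{align*}
The congruence is legitimate because each factor $(2j-1)^2/4$ is a $p$-adic integer, and each $p^2$ error term stays a multiple of $p^2$ after multiplying the finitely many $p$-integral factors.

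Dividing by $(2k)!$ is the one place requiring care, since a division modulo $p^2$ must be by a $p$-adic unit. For $k\le n$ we have $2k\le p-1$, so $(2k)!$ is prime to $p$ and the division is valid. We obtain
\begin{align*}
\binom{n+k}{2k}\equiv \frac{(-1)^k}{4^k}\cdot\frac{(2k)!}{4^k (k!)^2}=\binom{2k}{k}\frac{1}{(-16)^k}\pmod{p^2}.
\end{align*}
The only real obstacle is this bookkeeping on the range of $k$ and the unit denominator $(2k)!$. The algebraic core is the single observation $(p+2j-1)(p-2j+1)\equiv-(2j-1)^2 \pmod{p^2}$, because the cross terms cancel and no linear term in $p$ survives.
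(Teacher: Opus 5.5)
Your proof is correct and is essentially the paper's argument: the paper also writes $\binom{n+k}{2k}=\frac{1}{4^k(2k)!}\prod_{j=1}^{k}\bigl(p^2-(2j-1)^2\bigr)$ and drops the $p^2$ terms. Your explicit restriction to $0\le k\le n$, where $(2k)!$ is a $p$-adic unit, is a worthwhile addition: the congruence as stated fails modulo $p^2$ for $n<k<p$, and the paper only ever uses it in the range $k\le n$.
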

\begin{proof}
Note that
\begin{align*}
\binom{n+k}{2k}=\frac{1}{4^k(2k)!}\prod_{i=1}^{k}\left( p^2-(2j-1)^2\right). 
\end{align*}
It follows that
\begin{align*}
\binom{n+k}{2k}\equiv \frac{1}{4^k(2k)!}\prod_{i=1}^{k}\left(-(2j-1)^2\right)
=\binom{2k}{k}\frac{1}{(-16)^k}\pmod{p^2}.
\end{align*}
As desired.
\end{proof}

\begin{proof}[Proof of Theorem \ref{theorem1}]
 Let $p=2n+1$. By Lemma \ref{lemma2.3}, we obtain 
\begin{align*}
\sum^{n}_{k=0}\binom{2k}{k}^2\frac{a_{k}b_k}{16^k}
&\equiv \sum^{n}_{k=0}\binom{n}{k}\binom{n+k}{k}\left( -1\right) ^{k}a_{k}b_k\pmod{p^2}.
\end{align*}
In view of \eqref{eq:dual}, we have
\begin{align*}
\sum^{n}_{k=0}\binom{2k}{k}^2\frac{a_{k}b_k}{16^k}\equiv \sum_{j=0}^{n}\sum_{i=0}^{n}a^*_{j}b^*_i\left( -1\right)  ^{j+i}\sum_{k=\max\left( i,j\right) }^{n}\binom{n}{k}\binom{n+k}{k}\binom{k}{i}\binom{k}{j}(-1)^k\pmod{p^2}.
\end{align*}
Finally, applying Lemma \ref{lemma2.2} and then Lemma \ref{lemma2.3}
yields \eqref{th1c1}.

On the other hand, we have
\begin{align}
\sum^{n}_{k=0}\binom{2k}{k}^2\frac{a_{2k}}{16^k}& \equiv \sum^{n}_{k=0}\binom{2k}{k}^2\frac{1}{16^k}\sum_{i=0}^{2k}\binom{2k}{i}(-1)^ia_i^* \pmod{p^2} \nonumber \\
&=\sum_{i=0}^{p-1}(-1)^ia_i^*\sum^{(p-1)/2}_{k=\lfloor i/2\rfloor}\binom{2k}{k}^2\binom{2k}{i}\frac{1}{16^k}\pmod{p^2} \nonumber \\
&\equiv \sum_{i=0}^{p-1}(-1)^ia_i^*\sum^{n}_{k=\lfloor i/2\rfloor}\binom{n}{k}\binom{n+k}{k}\binom{2k}{i}(-1)^k \pmod{p^2} \label{cng25} .
\end{align}
By the Vandermonde identity \cite[(3.1)]{gould}, we have
\begin{align*}
\sum^{n}_{k=\lfloor i/2\rfloor}\binom{n}{k}\binom{n+k}{k}\binom{2k}{i}(-1)^k=\sum_{j=0}^{i} \sum^{n}_{k=\lfloor i/2\rfloor}\binom{n}{k}\binom{n+k}{k}\binom{k}{j}\binom{k}{i-j}(-1)^k.
\end{align*}
Applying Lemmas \ref{lemma2.2} and \ref{lemma2.3} and in view of \eqref{eq:CLF} we deduce that
\begin{align*}
\sum^{n}_{k=\lfloor i/2\rfloor}\binom{n}{k}\binom{n+k}{k}\binom{2k}{i}(-1)^k\equiv\left( \frac{-1}{p}\right) \frac{(-1)^i}{16^i} \mathcal{P}_i \pmod{p^2}.
\end{align*}
Substituting this into \eqref{cng25} gives  \eqref{congCLFS}. This completes the proof of Theorem \ref{theorem1}.
\end{proof}

\begin{proof}[Proof of Theorem \ref{theorem2}]
Note that
\begin{align*}
\sum_{k=0}^{p-1}\binom{2k}{k}\dfrac{a_{k}^*}{4^k}=\sum_{i=0}^{p-1}(-1)^ka_k\sum_{k=i}^{p-1}\binom{2k}{k}\binom{k}{i}\dfrac{1}{4^k}.
\end{align*}
Applying \eqref{lemma2.2+}, we get
\begin{align*}
\sum_{k=0}^{p-1}\binom{2k}{k}\dfrac{a_{k}^*}{4^k}=\dfrac{p}{2^{2(p-1)+1}}\binom{2p}{p}\sum_{i=0}^{p-1}(-1)^ka_k\frac{\binom{p-1}{k}}{2k+1}.
\end{align*}
Babbage's congruence \cite{babbage} states that for integers $a\ge b\ge0$,
\begin{equation*}
  \binom{ap}{bp}\equiv\binom{a}{b}\pmod{p^2}.
\end{equation*}
Finally, applying the above congruence with $a=2$, $b=1$, together with
Fermat's little theorem and \eqref{eq:binom1}, gives
\eqref{theorem2cong}.
\end{proof}
\begin{proof}[Proof of Theorem \ref{theorem3}] Observe that
\begin{align*}
\sum_{k=0}^{(p-1)/2}\binom{2k}{k}\dfrac{a_{2k}^*}{4^k}=\sum_{i=0}^{p-1}(-1)^ia_i\sum_{k=\lfloor i/2 \rfloor}^{(p-1)/2}\binom{2k}{k}\binom{2k}{i}\frac{1}{4^k}.
\end{align*}
By Lemma \ref{lemma2.3}, we obtain
\begin{align*}
\sum_{k=0}^{(p-1)/2}\binom{2k}{k}\dfrac{a_{2k}^*}{4^k}\equiv \sum_{i=0}^{p-1}(-1)^ia_i\sum_{k=\lfloor i/2 \rfloor}^{(p-1)/2}\binom{\frac{p-1}{2}+k}{2k}\binom{2k}{i}(-4)^k \pmod{p^2}.
\end{align*}
Using Lemma \ref{lemma2.1}, we get
\begin{align*}
\sum_{k=0}^{(p-1)/2}\binom{2k}{k}\dfrac{a_{2k}^*}{4^k}\equiv p\left( \frac{-1}{p}\right) \sum_{i=0}^{p-1}(-1)^ia_i\frac{2^i\binom{p-1}{i}\binom{p+i}{i}}{(2i+1)\binom{2i}{i}}\pmod{p^2}.
\end{align*}
Finally, combining \eqref{eq:binom1} and \eqref{eq:binom2} yields the desired result.
\end{proof}

\section{Some New Congruences involving Catalan-Larcombe-French Sequence and Generalized Central trinomial Coefficients }\label{section3}
We first treat congruence \eqref{congCLFS}, from which we derive several cases.
\begin{corollary} Let $p>3$ be a prime. Then
\begin{align}\label{ppcng}
\sum_{k=0}^{p-1}\frac{\mathcal{P}_{k}^2}{128 ^{k}} \equiv \left( \frac{-1}{p}\right)a(p)\pmod{p^2},
\end{align}
where $a(n)$ is defined by
\begin{align*}
q\prod_{n=1}^{\infty}(1-q^{2n})^4(1-q^{4n})^4=\sum_{n=1}^{\infty}a(n)q^n.
\end{align*}

\end{corollary}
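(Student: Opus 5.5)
The plan is to specialize \eqref{congCLFS} to a sequence $(a_k)_{k\ge0}$ whose dual is $a_k^*=\mathcal{P}_k/8^k$. With this choice the right-hand side of \eqref{congCLFS} becomes
\[
\left(\frac{-1}{p}\right)\sum_{k=0}^{p-1}\frac{\mathcal{P}_k}{16^k}\cdot\frac{\mathcal{P}_k}{8^k}=\left(\frac{-1}{p}\right)\sum_{k=0}^{p-1}\frac{\mathcal{P}_k^2}{128^k}.
\]
Since $(a_n^*)^*=a_n$, we must take $a_n=\sum_{j}\binom{n}{j}(-1)^j\mathcal{P}_j/8^j$.

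The key identity I expect is
\[
\sum_{j=0}^{n}\binom{n}{j}(-1)^j\frac{\mathcal{P}_j}{8^j}=\begin{cases}\binom{n}{n/2}^2/4^n & n \text{ even},\\ 0 & n\text{ odd}.\end{cases}
\]
A direct check supports it: $n=0,1,2,4$ give $1,0,\tfrac14,\tfrac{9}{64}$. By the involution property, this identity is equivalent to
\[
\mathcal{P}_n=\sum_{k}\binom{n}{2k}\binom{2k}{k}^2 4^k 8^{n-2k},
\]
which is a known alternative formula for the Catalan--Larcombe--French numbers (Jarvis--Verrill). To prove it from scratch, I would apply Zeilberger's algorithm to the right-hand side, as in Lemma~\ref{lemma2.1}. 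This should produce a second-order recurrence, which I would then match with \eqref{eq:CLFrec}, and finally check the initial values $1,8$. Obtaining and matching this recurrence is the main technical step, though it is routine.

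Granting the identity, $a_{2k}=\binom{2k}{k}^2/16^k$, so the left-hand side of \eqref{congCLFS} becomes
\[
\sum_{k=0}^{(p-1)/2}\frac{\binom{2k}{k}^4}{256^k}.
\]
Multiplying through by $\left(\frac{-1}{p}\right)$ reduces the corollary to
\[
\sum_{k=0}^{(p-1)/2}\frac{\binom{2k}{k}^4}{256^k}\equiv a(p)\pmod{p^2}.
\]
This is Kilbourn's theorem (a Van Hamme-type supercongruence), which in fact holds modulo $p^3$ for the sum up to $p-1$. Here $a(n)$ is the coefficient sequence of the weight-4 eta-product $\eta(2z)^4\eta(4z)^4$, exactly as in the statement.

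Passing between the two ranges of summation is harmless. For $(p-1)/2<k\le p-1$ we have $p\mid\binom{2k}{k}$, so these terms are divisible by $p^4$. I will cite Kilbourn's result rather than reprove it, as it is the genuinely deep input.
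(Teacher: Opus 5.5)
Your proposal is correct and follows essentially the same route as the paper: take $a_k^*=\mathcal{P}_k/8^k$ in \eqref{congCLFS}, use the formula $\mathcal{P}_n=2^n\sum_k\binom{n}{2k}\binom{2k}{k}^2 4^{n-2k}$ (equivalent to yours) to get $a_{2k}=\binom{2k}{k}^2/16^k$ and $a_{2k+1}=0$, extend the half-range sum using $p\mid\binom{2k}{k}$, and invoke the known congruence for $\sum_k\binom{2k}{k}^4/256^k$. The differences are cosmetic. The paper cites Zagier for the formula for $\mathcal{P}_n$ rather than re-deriving it via Zeilberger, and it cites Ahlgren's mod $p^2$ result instead of Kilbourn's mod $p^3$ strengthening.
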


\begin{proof}For a nonnegative integer $n$, it is well-known that (see \cite{Zagier2009})
\begin{align*}
\mathcal{P}_n=2^n\sum_{k=0}^{\lfloor n/2\rfloor}\binom{n}{2k}\binom{2k}{k}^24^{n-2k} .
\end{align*}
Define $a_n^*:=\mathcal{P}_n/8^n$. Recall that $\binom{2k}{k}\equiv 0\pmod{p}$ for $p/2<k<p$. Thus, it follows from \eqref{congCLFS} that
\begin{align*}
\sum_{k=0}^{p-1}\frac{\mathcal{P}_{k}^2}{128 ^{k}}\equiv\left( \frac{-1}{p}\right)  \sum^{p-1}_{k=0}\binom{2k}{k}^4\frac{1}{256^k}\pmod{p^2}.
\end{align*}
The result then follows from \cite[Theorem~5]{Ahlgren2000}.
\end{proof}
\begin{remark}
Numerical computation suggests that \eqref{ppcng} can be strengthened to
\begin{align}
\sum_{k=0}^{p-1}\frac{\mathcal{P}_{k}^2}{128 ^{k}}\equiv\left( \frac{-1}{p}\right)  \sum^{p-1}_{k=0}\binom{2k}{k}^4\frac{1}{256^k}\pmod{p^3}.
\end{align}
We leave this as an open problem.
\end{remark}

\begin{corollary}  Let $p>3$ be a prime. Then
\begin{align*}
p \sum_{k=0}^{p-1}\frac{\mathcal{P}_{k}}{(2k+1)\binom{2k}{k}  4^{k}}\equiv\begin{cases}4x^2-2p \pmod{p^2} & \text{ if }p=x^2+4y^2\equiv 1\pmod 4,
\\
0\pmod{p^2} & \text{ if }p\equiv 3 \pmod 4.
\end{cases}  
\end{align*}
\end{corollary}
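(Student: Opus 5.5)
The plan is to apply Theorem \ref{theorem3} with $a_k:=\mathcal{P}_k/8^k$. With this choice $2^k a_k=\mathcal{P}_k/4^k$, so the right-hand side of \eqref{th3cng} becomes exactly
\[
p\left(\frac{-1}{p}\right)\sum_{k=0}^{p-1}\frac{\mathcal{P}_k}{(2k+1)\binom{2k}{k}4^k}.
\]
Multiplying through by $\left(\frac{-1}{p}\right)$, it then suffices to evaluate $\sum_{k=0}^{(p-1)/2}\binom{2k}{k}a^*_{2k}/4^k$ modulo $p^2$.

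\textbf{Step 1: compute the dual sequence in closed form.} Use the Zagier formula $\mathcal{P}_n=2^n\sum_{j}\binom{n}{2j}\binom{2j}{j}^2 4^{n-2j}$ quoted in the previous corollary. Dividing by $8^n$ gives
\[
\frac{\mathcal{P}_n}{8^n}=\sum_{m=0}^n\binom{n}{m}c_m,
\qquad c_{2j}=\frac{\binom{2j}{j}^2}{16^j},\quad c_{2j+1}=0 .
\]
Thus $a_n$ is the binomial transform of $c$. Inverting with $\sum_k\binom{n}{k}\binom{k}{m}(-1)^k=(-1)^n\delta_{nm}$ gives $a_n^*=(-1)^n c_n$. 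In particular
\[
a_{2k}^*=\frac{\binom{4k}{2k}^2}{256^k},
\]
so the left side of \eqref{th3cng} becomes
\[
S:=\sum_{k=0}^{(p-1)/2}\frac{\binom{2k}{k}\binom{4k}{2k}^2}{1024^k}.
\]

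\textbf{Step 2: trim the range of summation.} For $p/4<k\le (p-1)/2$ we have $p\mid\binom{4k}{2k}$, so those terms vanish modulo $p^2$. Similarly, for $(p-1)/2<k\le p-1$ the factor $\binom{2k}{k}\binom{4k}{2k}^2$ is divisible by $p^2$. Hence $S$ is congruent modulo $p^2$ both to the sum over $k<p/4$ and to the full sum over $0\le k\le p-1$.

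\textbf{Step 3 (main obstacle): evaluate $S$.} The remaining task is to prove
\[
S\equiv \left(\frac{-1}{p}\right)(4x^2-2p)\pmod{p^2} \ \text{ if } p=x^2+4y^2,
\qquad S\equiv 0\pmod{p^2} \ \text{ if } p\equiv 3\pmod 4.
\]
I would first look for this as a known supercongruence of Z.-W. Sun or Z.-H. Sun type. Failing that, I would reduce it to the classical
\[
\sum_{k}\frac{\binom{2k}{k}^3}{64^k}\equiv 4x^2-2p \quad\text{or}\quad 0\pmod{p^2},
\]
by a truncated quadratic transformation.

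For the reduction, write $\binom{4k}{2k}\binom{2k}{k}/64^k=\binom{-1/4}{k}\binom{-3/4}{k}$. This puts $S$ in the form $\sum_k\binom{-1/4}{k}\binom{-3/4}{k}\binom{4k}{2k}/16^k$, which invites applying \eqref{eq:ZHSun2014} with $\alpha=-1/4$ to a suitable $f(k)$ to trade it for a sum with weight $\binom{2k}{k}^3/64^k$. Alternatively, one can try to mimic the proof of Theorem \ref{theorem1}: replace $\binom{2k}{k}^2/16^k$ by $\binom{n}{k}\binom{n+k}{k}(-1)^k$ via Lemma \ref{lemma2.3}, and find a Zeilberger/Sigma identity that turns the resulting finite sum into a Legendre-polynomial value $P_n(x)$. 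One would then invoke the known evaluation of $P_{(p-1)/2}$ at the relevant point modulo $p^2$.

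The remaining bookkeeping is to check the sign: the factor $\left(\frac{-1}{p}\right)$ from Theorem \ref{theorem3} must cancel against the one arising in this evaluation, which I expect to give exactly the stated cases.
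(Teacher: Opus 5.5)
Your choice $a_k=\mathcal{P}_k/8^k$ in Theorem \ref{theorem3} is sound, and the inversion $a_n^*=(-1)^nc_n$ in Step 1 is correct. The error comes when you read off $c_{2k}$. Since $c_{2j}=\binom{2j}{j}^2/16^j$, putting $j=k$ gives $a_{2k}^*=\binom{2k}{k}^2/16^k$. The expression $\binom{4k}{2k}^2/256^k$ you wrote is $c_{4k}$. Directly: $a_2^*=a_0-2a_1+a_2=1-2+\tfrac{5}{4}=\tfrac{1}{4}$, not $\tfrac{9}{64}$.

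So your $S=\sum_k\binom{2k}{k}\binom{4k}{2k}^2/1024^k$ is not the left side of \eqref{th3cng}, and the congruence you set as the goal of Step 3 is false. Take $p=13=3^2+4\cdot 1^2$. The terms $k=0,1,2,3$ of $S$ are $\equiv 1,2,8,6\pmod{13}$, and the terms $k\ge 4$ vanish because $13\mid\binom{4k}{2k}$. Hence $S\equiv 4\pmod{13}$, whereas $4x^2-2p=10$. For $p=5$ one likewise gets $S\equiv 4$ against $4x^2-2p\equiv 19\pmod{25}$. So Step 3, which you only sketch, cannot be completed by any of the routes you propose.

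With the index corrected, the left side of \eqref{th3cng} is $\sum_{k=0}^{(p-1)/2}\binom{2k}{k}^3/64^k$, and the ``main obstacle'' disappears. This is exactly Ahlgren's sum \cite[Theorem~3]{ahl2001}, which the paper itself uses in the remark after Corollary \ref{corollary4.4}. It is $\equiv 4x^2-2p\pmod{p^2}$ when $p=x^2+4y^2$, and $\equiv 0\pmod{p^2}$ when $p\equiv 3\pmod 4$. Multiplying by $\left(\frac{-1}{p}\right)$, which equals $1$ in the first case, gives the corollary. As a check at $p=5$: $1+\tfrac{1}{8}+\tfrac{27}{512}\equiv 19\pmod{25}$.

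This corrected argument is a genuinely different route from the paper's. The paper instead:
\begin{itemize}
\item applies \eqref{congCLFS} with $a_k=p/(2k+1)$;
\item computes the dual from $\sum_j\binom{k}{j}\frac{(-1)^j}{2j+1}=\frac{4^k}{(2k+1)\binom{2k}{k}}$;
\item reduces via Lemma \ref{lemma2.3} to $p\sum_{k}\binom{(p-1)/2}{k}\binom{(p-1)/2+k}{k}\frac{(-1)^k}{4k+1}$;
\item quotes Z.-H. Sun's evaluation of that sum from \cite{ZHsun2018}.
\end{itemize}
Both are one-step reductions to a known supercongruence. Yours, once fixed, lands on the more classical Ahlgren sum rather than Sun's less standard one.
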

\begin{proof}By \cite[(1.43)]{gould},
\begin{align*}
\sum_{k=0}^n\binom{n}{k}\frac{(-1)^k}{x-k}=\frac{(-1)^n}{(x-n)\binom{x}{n}}.
\end{align*}
Setting $x=-1/2$ in the above identity, and using $\binom{-\frac{1}{2}}{n}=\binom{2n}{n}\frac{1}{(-4)^n}$, gives
\begin{align}\label{4.3}
\sum_{k=0}^{n}\binom{n}{k}\frac{(-1)^k}{2k+1}=\frac{4^n}{(2n+1)\binom{2n}{n}}.
\end{align}
Define the $p$-adic sequence $a_k:=\frac{p}{2k+1}$, for $k\in \left\lbrace 0,1,\ldots,p-1 \right\rbrace $.
In view of \eqref{4.3} and \eqref{congCLFS}, we obtain
\begin{align*}
p\sum_{k=0}^{p-1}\frac{\mathcal{P}_{k}}{(2k+1)\binom{2k}{k}  4^{k}}\equiv p\left( \frac{-1}{p}\right) \sum^{p-1}_{k=0}\binom{2k}{k}^2\frac{1}{(4k+1)16^k} \pmod{p^2}.
\end{align*}
Combining Lemma \ref{lemma2.3} with the fact that
$\binom{2k}{k}\equiv 0\pmod{p}$ for $p/2<k<p$, we get
\begin{align}
p\sum_{k=0}^{p-1}\frac{\mathcal{P}_{k}}{(2k+1)\binom{2k}{k}  4^{k}}&\equiv p\left( \frac{-1}{p}\right) \sum^{(p-1)/2}_{k=0}\binom{2k}{k}^2\frac{1}{(4k+1)16^k} \pmod{p^2}\nonumber \\
&\equiv p\left( \frac{-1}{p}\right) \sum^{(p-1)/2}_{k=0}\binom{\frac{p-1}{2}}{k}\binom{\frac{p-1}{2}+k}{k}\frac{(-1)^k}{4k+1} \pmod{p^2}.\label{cor4.2eq}
\end{align}
As shown in the proof of \cite[Theorem 2.8]{ZHsun2018}, we have
\begin{align*}
p \sum^{\frac{p-1}{2}}_{k=0}\binom{\frac{p-1}{2}}{k}\binom{\frac{p-1}{2}+k}{k}\frac{(-1)^k}{4k+1}\equiv \begin{cases}4x^2-2p \pmod{p^2} & \text{ if }p=x^2+4y^2\equiv 1\pmod 4,
\\
0\pmod{p^2} & \text{ if }p\equiv 3 \pmod 4.
\end{cases}
\end{align*}
Finally, substituting the above congruence into \eqref{cor4.2eq} leads to the result.
\end{proof}

\begin{corollary} Let $p>3$ be a prime. Then
\begin{align*}
\sum_{k=0}^{p-1}\frac{\mathcal{P}_{k}H_k}{16^{k}}\equiv \left( \frac{-1}{p}\right)\left( -2q_p(2)+pq_p(2)^2\right)\pmod{p^2},
\end{align*}
where $q_p(2)=(2^{p-1}-1)/p$ is Fermat's quotient.
\end{corollary}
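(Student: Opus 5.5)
We plan to apply the congruence \eqref{congCLFS} of Theorem \ref{theorem1} with the dual sequence chosen to be the harmonic numbers, $a_k^*:=H_k$. Since $(a^*)^*=a$, this means $a_k=\sum_{j=0}^{k}\binom{k}{j}(-1)^jH_j$. We will use the classical identity
\begin{align*}
\sum_{j=0}^{k}\binom{k}{j}(-1)^jH_j=-\frac1k\qquad(k\ge1),
\end{align*}
together with $a_0=H_0=0$. This identity can be verified by induction using $\binom{k}{j}=\binom{k-1}{j}+\binom{k-1}{j-1}$ and $H_j-H_{j-1}=1/j$, or read off from \cite{gould}. Hence $a_0=0$ and $a_k=-1/k$ for $1\le k\le p-1$. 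These values are $p$-adic integers, and only indices up to $p-1$ enter \eqref{congCLFS}, so its hypotheses are met. With this choice the left-hand side of \eqref{congCLFS} becomes $-\tfrac12\sum_{k=1}^{(p-1)/2}\binom{2k}{k}^2\frac{1}{k\,16^k}$.

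Next we evaluate this sum modulo $p^2$. Put $n=(p-1)/2$. By Lemma \ref{lemma2.3}, applied exactly as in the proof of Theorem \ref{theorem1}, we have $\binom{2k}{k}^2/16^k\equiv\binom{n}{k}\binom{n+k}{k}(-1)^k\pmod{p^2}$ for $0\le k\le n$. Multiplying by $1/k$ with $1\le k\le n<p$ costs nothing $p$-adically. This reduces the problem to the exact identity
\begin{align*}
\sum_{k=1}^{n}\binom{n}{k}\binom{n+k}{k}\frac{(-1)^k}{k}=-2H_n .
\end{align*}
To prove it, we write $\binom{n+k}{k}=\sum_{j}\binom{n}{j}\binom{k}{j}$ and use $\sum_k\binom{n}{k}(-1)^k\binom{k}{j}/k$-type evaluations. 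Alternatively, we may let Zeilberger's algorithm \cite{PWZ} produce a first-order recurrence in $n$ for the left side and check that $-2H_n$ satisfies it. A third route is to differentiate $\sum_k\binom{n}{k}\binom{n+k}{k}(-1)^k\binom{x}{k}$ suitably. Granting the identity, the left-hand side of \eqref{congCLFS} is congruent to $-\tfrac12\cdot(-2H_n)=H_{(p-1)/2}\pmod{p^2}$.

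Combining these two steps, \eqref{congCLFS} gives $H_{(p-1)/2}\equiv\left(\frac{-1}{p}\right)\sum_{k=0}^{p-1}\mathcal{P}_kH_k/16^k\pmod{p^2}$. Multiplying by $\left(\frac{-1}{p}\right)$ and invoking Lehmer's classical congruence $H_{(p-1)/2}\equiv-2q_p(2)+p\,q_p(2)^2\pmod{p^2}$ for $p>3$ finishes the argument. The main obstacle is the clean verification of the finite identity for $\sum_k\binom{n}{k}\binom{n+k}{k}(-1)^k/k$. If the Zeilberger route is awkward, we will try the following: write $1/k=\int_0^1 x^{k-1}\,dx$, so that the sum becomes $\int_0^1 (P_n(1-2x)-1)/x\,dx$ for the shifted Legendre polynomial, and then evaluate this integral using Rodrigues' formula.
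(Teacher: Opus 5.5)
Your proposal is correct and is essentially the paper's proof: take $a_k^*=H_k$ (so $a_k=-1/k$ for $k\ge1$ and $a_0=0$) in \eqref{congCLFS}, use Lemma~\ref{lemma2.3} to replace $\binom{2k}{k}^2/16^k$ by $(-1)^k\binom{n}{k}\binom{n+k}{k}$, evaluate with $\sum_{k=1}^{n}\binom{n}{k}\binom{n+k}{k}(-1)^k/k=-2H_n$, and finish with Lehmer's congruence. The only difference is that the paper simply quotes this last identity from Gould's tables as (3.122), whereas you propose to prove it. Your Vandermonde route does prove it: for $1\le j\le n$ one has $\sum_{k=1}^{n}\binom{n}{k}(-1)^k\binom{k}{j}/k=(-1)^j/j$, while the $j=0$ term gives $-H_n$, so the total is $-2H_n$.
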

\begin{proof}
By \cite[(1.45)]{gould}, we have
\begin{align}\label{hnidentity}
\sum_{k=1}^{m}\binom{m}{k}\frac{(-1)^k}{k}=-H_m.
\end{align}
Define $a_k^{*}:=H_k$, for $k\in\{0,1,2,\ldots,p-1\}$. Applying \eqref{hnidentity} with \eqref{congCLFS} and in view of Lemma \ref{lemma2.3}, we obtain
\begin{align*}
\sum_{k=0}^{p-1}\frac{\mathcal{P}_{k}H_k}{16^{k}}&\equiv -\left( \frac{-1}{p}\right)\sum^{p-1}_{k=1}\binom{2k}{k}^2\frac{1}{2k16^k}  \pmod{p^2}\\
&\equiv -\left( \frac{-1}{p}\right)\sum^{\frac{p-1}{2}}_{k=1}\binom{\frac{p-1}{2}}{k}\binom{\frac{p-1}{2}+k}{k}\frac{(-1)^k}{2k}  \pmod{p^2}.
\end{align*}
Using the following identity (see \cite[(3.122)]{gould}),
\begin{align}\label{gould3.122}
\sum_{k=1}^m\binom{m}{k}\binom{m+k}{k}\frac{(-1)^k}{k}=-2H_m,
\end{align}
with $m=(p-1)/2$, we deduce that
\begin{align*}
\sum_{k=0}^{p-1}\frac{\mathcal{P}_{k}H_k}{16^{k}}&\equiv \left( \frac{-1}{p}\right)H_{(p-1)/2}  \pmod{p^2}.
\end{align*}
Thus, the result follows by applying $H_{(p-1)/2}\equiv -2q_p(2)+pq_p(2)^2\pmod{p^2} $ (see \cite{lehmer1938}).
\end{proof}

\begin{corollary}\label{corollary4.4} Let $p>3$ be a prime and $b,c$ be integers. If $p\nmid b$, then
\begin{align}\label{CFgcc}
\sum_{k=0}^{p-1}T_{k}(b,c)\frac{\mathcal{P}_{k}}{(16b) ^{k}}\equiv\left( \frac{-1}{p}\right)\sum^{p-1}_{k=0}\binom{2k}{k}^3\frac{c^{k}}{(4b)^{2k}} \pmod{p^2} .
\end{align}
\end{corollary}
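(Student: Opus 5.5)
We will apply \eqref{congCLFS} with a suitably chosen dual pair. Since $p\nmid b$, we may define
\[
a_k^*:=\frac{T_k(b,c)}{b^k}\qquad(k\ge0),
\]
which is a sequence of $p$-adic integers. We then let $(a_k)$ be its dual, $a_n=\sum_{j=0}^n\binom{n}{j}(-1)^j T_j(b,c)/b^j$. This is again a sequence of $p$-adic integers, and by the involution property $(a_n^*)^*=a_n$ its dual is indeed $T_k(b,c)/b^k$. With this choice the right-hand side of \eqref{congCLFS} becomes $\left(\frac{-1}{p}\right)\sum_{k=0}^{p-1}\mathcal{P}_kT_k(b,c)/(16b)^k$.

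The key step is to compute $a_n$ in closed form. From \eqref{eq:Tnbc},
\[
\frac{T_j(b,c)}{b^j}=\sum_{i}\binom{j}{2i}u_i,\qquad u_i:=\binom{2i}{i}\Bigl(\frac{c}{b^2}\Bigr)^i .
\]
Interchanging the order of summation and using $\binom{n}{j}\binom{j}{2i}=\binom{n}{2i}\binom{n-2i}{j-2i}$, the inner sum is
\[
\sum_j\binom{n}{j}\binom{j}{2i}(-1)^j=\binom{n}{2i}\sum_{m=0}^{n-2i}\binom{n-2i}{m}(-1)^{m}=\binom{n}{2i}\,[n=2i].
\]
Hence $a_n=0$ for odd $n$, and
\[
a_{2k}=u_k=\binom{2k}{k}\frac{c^k}{b^{2k}} .
\]
This is just the binomial inversion of the relation $T_j/b^j=\sum_i\binom{j}{2i}u_i$.

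Substituting into the left-hand side of \eqref{congCLFS} gives
\[
\sum_{k=0}^{(p-1)/2}\binom{2k}{k}^2\frac{a_{2k}}{16^k}=\sum_{k=0}^{(p-1)/2}\binom{2k}{k}^3\frac{c^k}{(4b)^{2k}}.
\]
Extending this sum to $k\le p-1$ costs nothing modulo $p^2$, because $p\mid\binom{2k}{k}$ for $p/2<k<p$, so $\binom{2k}{k}^3\equiv0\pmod{p^3}$ there. Multiplying both sides of the resulting congruence by $\left(\frac{-1}{p}\right)$ yields \eqref{CFgcc}.

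The only real content is the inversion computation. Its one delicate point is remembering to include the hypothesis $p\nmid b$, which is needed so that all terms are $p$-adic integers and \eqref{congCLFS} applies.
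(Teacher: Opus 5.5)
Your proof is correct and is essentially the paper's argument. The paper likewise applies \eqref{congCLFS} with $a_k^*=T_k(b,c)/b^k$ and $a_k$ supported on even indices with $a_{2j}=\binom{2j}{j}c^j/b^{2j}$. It checks the duality by noting that $(-1)^k a_k=a_k$, whereas you obtain $a_k$ by binomial inversion. Your remark that the sum can be extended to $k\le p-1$ because $p^3\mid\binom{2k}{k}^3$ for $p/2<k<p$ supplies a step the paper leaves implicit.
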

\begin{proof}
In view of \eqref{eq:Tnbc}, one writes that
\begin{align*}
\frac{T_{n}\left( b,c\right)}{b^n}=\sum_{k=0}^{n}\binom{n}{k}a_k ,
\end{align*}
where
\begin{align*}
a_{k}=\begin{cases}
0, & \text{ if $k$ is odd},\\
 \binom{2k}{k}b^{-2k}c^k & \text{ if $k$ is even.}
\end{cases}
\end{align*}
Substituting the above into \eqref{congCLFS}, yields \eqref{CFgcc}.
\end{proof}

\begin{remark}
Taking $b=2$, $c=1$ in~\eqref{CFgcc} and applying 
\cite[Theorem~3]{ahl2001}, we deduce that
\begin{align*}
\sum_{k=0}^{p-1}\binom{2k}{k}\frac{\mathcal{P}_{k}}{32 ^{k}}\equiv \begin{cases}
4x^2-2p \pmod{p^2}& \text{ if }p=x^2+4y^2\equiv 1\pmod{4},  \\
0 \pmod{p^2} & \text{ if }p\equiv 3\pmod{4}.
\end{cases}
\end{align*}
One may also deduce similar results for other values of $b$, $c$ from Corollary~\ref{corollary4.4} by using the known evaluations of $\sum^{p-1}_{k=0}\binom{2k}{k}^3/m^{k}$ modulo $p^2$ in 
\cite{zhsun2013}.
\end{remark}

\begin{lemma}[{\cite[Theorem~3.7]{ArticleX}}]\label{lem:identity} Let $n$ be a nonnegative integer and $a,b,c$ be integers, with $a\neq 0$. We have
\begin{align*}
\sum_{k=0}^n\binom{n}{k}T_k(b,c)a^{n-k}=T_n(b+a,c).
\end{align*}
\end{lemma}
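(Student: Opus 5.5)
We prove the identity $\sum_{k=0}^n\binom{n}{k}T_k(b,c)a^{n-k}=T_n(b+a,c)$ with generating functions; the cleanest route is the exponential generating function of the sequence $T_k(b,c)$. By \eqref{eq:Tnbc},
\begin{align*}
\sum_{n\ge0}T_n(b,c)\frac{x^n}{n!}=\sum_{j\ge0}\binom{2j}{j}\frac{c^jx^{2j}}{(2j)!}\sum_{m\ge0}\frac{(bx)^m}{m!}=e^{bx}\sum_{j\ge0}\frac{(cx^2)^j}{j!^2}.
\end{align*}
Write $F(x):=\sum_{j\ge0}(cx^2)^j/j!^2$, which does not depend on $b$. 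Then the exponential generating function of $(T_n(b,c))_{n\ge0}$ is $e^{bx}F(x)$.

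Binomial convolution of exponential generating functions corresponds to multiplication. Hence the exponential generating function of $\sum_{k=0}^n\binom{n}{k}T_k(b,c)a^{n-k}$ is
\begin{align*}
e^{ax}\cdot e^{bx}F(x)=e^{(a+b)x}F(x),
\end{align*}
which by the first computation is exactly the exponential generating function of $T_n(b+a,c)$. Comparing coefficients of $x^n/n!$ gives the lemma as a formal power series identity. No convergence is needed, and the hypothesis $a\neq0$ plays no role; presumably it only matters in the source's conventions.

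As a cross-check, one can also argue directly. Substitute \eqref{eq:Tnbc} into the left-hand side and use $\binom{n}{k}\binom{k}{2j}=\binom{n}{2j}\binom{n-2j}{k-2j}$. The inner sum over $k$ then becomes $\sum_{k}\binom{n-2j}{k-2j}b^{k-2j}a^{n-k}=(a+b)^{n-2j}$ by the binomial theorem, which reproduces $T_n(a+b,c)$ term by term.

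The only point needing care is the index bookkeeping (the range $2j\le k\le n$) in this direct approach. The generating-function argument avoids it entirely.
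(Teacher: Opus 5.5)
Your proof is correct, and both versions are complete. The exponential generating function $\sum_{n\ge0}T_n(b,c)x^n/n!=e^{bx}\sum_{j\ge0}(cx^2)^j/j!^2$ follows from \eqref{eq:Tnbc} because $\binom{n}{2j}\binom{2j}{j}/n!=1/\bigl(j!^2(n-2j)!\bigr)$. Binomial convolution with $(a^n)_{n\ge0}$ then multiplies it by $e^{ax}$. Your direct check via $\binom{n}{k}\binom{k}{2j}=\binom{n}{2j}\binom{n-2j}{k-2j}$ and the binomial theorem is equally sound. You are also right that $a\neq0$ plays no role, given $0^0=1$, a convention \eqref{eq:Tnbc} already needs when $b=0$.

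The paper itself does not prove this lemma; it imports it from an external source (Theorem~3.7 there). So there is no in-paper argument to compare with.

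For comparison, the shortest route uses only the defining property of $T_n(b,c)$ as the coefficient of $x^n$ in $(x^2+bx+c)^n$:
\[
T_n(b+a,c)=[x^n]\bigl((x^2+bx+c)+ax\bigr)^n=\sum_{k=0}^n\binom{n}{k}a^{n-k}[x^k](x^2+bx+c)^k=\sum_{k=0}^n\binom{n}{k}a^{n-k}T_k(b,c).
\]
This needs no explicit formula for $T_n$ at all. Alternatively, in the spirit of Lemma~\ref{lemma5.4}, the binomial transform $G(t)\mapsto G\bigl(t/(1-at)\bigr)/(1-at)$ sends the ordinary generating function $1/\sqrt{1-2bt+(b^2-4c)t^2}$ to $1/\sqrt{1-2(a+b)t+((a+b)^2-4c)t^2}$.

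Your EGF argument sits between these two. It uses the explicit sum \eqref{eq:Tnbc}, but in exchange it isolates the dependence on $b$ as the factor $e^{bx}$, so the identity reduces to $e^{ax}e^{bx}=e^{(a+b)x}$.
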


\begin{corollary}Let $p>3$ be a prime and $a,b,c$ be integers. If $p\nmid a$, then
\begin{align}\label{T2kT}
\sum^{p-1}_{k=0}\binom{2k}{k}^2\frac{T_{2k}(b,c) }{a^{2k}16^k}\equiv\left( \frac{-1}{p}\right) \sum_{k=0}^{p-1}\mathcal{P}_{k}\frac{T_k(b+a,c)}{a^{k}16^k}\pmod{p^2} 
\end{align}
\end{corollary}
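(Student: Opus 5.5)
We apply \eqref{congCLFS} to the sequence $a_k := T_k(b,c)/a^k$. These are $p$-adic integers because $p\nmid a$.

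With this choice the left-hand side of \eqref{congCLFS} is $\sum_{k=0}^{(p-1)/2}\binom{2k}{k}^2 T_{2k}(b,c)/(a^{2k}16^k)$. We extend this sum to $k\le p-1$ modulo $p^2$. For $(p-1)/2<k\le p-1$ we have $p\mid\binom{2k}{k}$, so $p^2\mid\binom{2k}{k}^2$, while $T_{2k}(b,c)/a^{2k}$ stays $p$-integral. The added terms are therefore $\equiv 0\pmod{p^2}$, and the left side equals the left side of \eqref{T2kT} modulo $p^2$.

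It remains to compute the dual sequence $a_k^*$. By definition,
\[
a_n^*=\sum_{k=0}^n\binom{n}{k}(-1)^k\frac{T_k(b,c)}{a^k}=\frac{1}{a^n}\sum_{k=0}^n\binom{n}{k}T_k(b,c)(-a)^{n-k}\cdot(-1)^n .
\]
The sign bookkeeping works as follows: $(-1)^k a^{-k} = a^{-n}(-1)^n(-a)^{n-k}$, since $(-1)^n(-1)^{n-k}=(-1)^k$. Applying Lemma \ref{lem:identity} with $-a$ in place of $a$ gives
\[
a_n^* = \frac{(-1)^n}{a^n}\,T_n(b-a,c).
\]

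This does not match the target, which has $T_k(b+a,c)$ with no sign. The sign is absorbed by the symmetry $T_n(-b,c)=(-1)^nT_n(b,c)$, which follows directly from \eqref{eq:Tnbc} because every term carries $b^{n-2j}$. Hence $(-1)^nT_n(b-a,c)=T_n(a-b,c)$, and we get
\[
a_n^*=\frac{T_n(a-b,c)}{a^n}.
\]

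This yields $T_k(a-b,c)$ rather than $T_k(b+a,c)$. The fix is to note that the left side is unchanged under $b\to -b$, since $T_{2k}(-b,c)=T_{2k}(b,c)$. Running the argument with $-b$ in place of $b$ therefore gives
\[
a_n^* = \frac{T_n(a+b,c)}{a^n},
\]
exactly as required. Equivalently, we can start from $a_k:=T_k(-b,c)/a^k$, which has the same even-indexed terms $a_{2k}=T_{2k}(b,c)/a^{2k}$.

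Substituting $a_{2k}$ and $a_k^*$ into \eqref{congCLFS} then gives \eqref{T2kT}. The only delicate point is the sign handling in the dual computation, which the $b\mapsto -b$ symmetry resolves. The truncation of the left sum is routine.
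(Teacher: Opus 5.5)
Your proof is correct and is essentially the paper's argument: the paper simply applies Lemma~\ref{lem:identity} to \eqref{congCLFS}, which amounts to your final choice $a_k=T_k(-b,c)/a^k=(-1)^kT_k(b,c)/a^k$, whose dual sequence is $T_k(b+a,c)/a^k$. You also spell out two details the paper leaves implicit: the sign bookkeeping, and the extension of the left-hand sum from $(p-1)/2$ to $p-1$ using $p\mid\binom{2k}{k}$ for $p/2<k<p$.
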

\begin{proof}
\eqref{T2kT} follows by applying Lemma~\ref{lem:identity} to \eqref{congCLFS}.
\end{proof}
\begin{remark}Setting $b=2$ and $c=1$ in \eqref{T2kT}, we conclude that
\begin{align}\label{TkPk4k}
\sum_{k=0}^{p-1}\mathcal{P}_{k}\frac{T_k(2+a,1)}{a^{k}16^k}\equiv\left( \frac{-1}{p}\right)\sum^{p-1}_{k=0}\binom{2k}{k}^2\binom{4k}{2k}\frac{1}{a^{2k}16^k} \pmod{p^2} .
\end{align}
In particular, taking $a=\mp4$ in \eqref{TkPk4k} and in view of \cite{mort2005}, we get
\begin{align*}
\sum_{k=0}^{p-1}\binom{2k}{k}\frac{\mathcal{P}_{k}}{64 ^{k}}\equiv 
\sum_{k=0}^{p-1}\mathcal{P}_{k}\frac{T_k(6,1)}{64^k}\equiv
\begin{cases}
4x^2-2p \pmod{p^2}& \text{ if }p=x^2+2y^2\equiv 1,3\pmod{8},  \\
0 \pmod{p^2} & \text{ if }p\equiv 5,7\pmod{8}.
\end{cases}
\end{align*}
Other interesting congruences can be deduced by taking different values 
of $b$ and $c$ in \eqref{TkPk4k} with the help of 
\cite{zhsun2011,zhsun2013}.
\end{remark}

\begin{remark}
As mentioned above, Sun~\cite{Sun2014CANT} posed many conjectures on sums of 
the form $\sum_{k=0}^{p-1}\binom{2k}{k}^2T_{2k}(b,c)/m^k$ modulo $p^2$, 
where $p$ is an odd prime and $b$, $c$, $m$ are integers with $p\nmid m$. 
By \eqref{T2kT}, some of those sums admit a new representation in terms of 
$\sum_{k=0}^{p-1}\binom{2k}{k}\mathcal{P}_k/m^k$. For example, 
\cite[Conjecture~4.6]{Sun2014CANT} involves 
$\sum_{k=0}^{p-1}\binom{2k}{k}^2T_{2k}(6,1)/1024^k$, and by \eqref{T2kT} we have
\begin{align*}
\sum_{k=0}^{p-1}\binom{2k}{k}^2\frac{T_{2k}(6,1)}{1024^k}
\equiv \left( \frac{-1}{p}\right)\sum_{k=0}^{p-1}\binom{2k}{k}\frac{\mathcal{P}_k}{128^k}
\equiv \left( \frac{-1}{p}\right)\sum_{k=0}^{p-1}\frac{T_{k}(14,1)\mathcal{P}_k}{128^k}
\pmod{p^2}.
\end{align*}
Similar results for other values of $b$, $c$, and $m$ can be deduced 
from \eqref{T2kT} with the help of \cite[Conjectures~4.7--4.16]{Sun2014CANT}.
\end{remark}
\begin{lemma}\label{wanglemma} \cite[Lemma 2.1]{wangsun} Let $n$ be a nonnegative integer and $b,c$ be integers. We have
\begin{align*}
T_n(b,c^2)=\sum_{k=0}^{n}\binom{n}{k}\binom{2k}{k}(b+2c)^{n-k}(-c)^k.
\end{align*}
\end{lemma}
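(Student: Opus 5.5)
The plan is to compute $T_n(b,c^2)$ directly as a coefficient extraction, using a suitable rewriting of the quadratic. By definition \eqref{eq:Tnbc}, $T_n(b,c^2)$ is the coefficient of $x^n$ in $(x^2+bx+c^2)^n$. The key observation is the algebraic identity
\begin{align*}
x^2+bx+c^2=(b+2c)x+(x-c)^2 .
\end{align*}
This splits the quadratic into a monomial of degree one and a perfect square.

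I would then expand $\bigl((b+2c)x+(x-c)^2\bigr)^n$ with the binomial theorem, getting
\begin{align*}
\sum_{k=0}^{n}\binom{n}{k}(b+2c)^{n-k}x^{n-k}(x-c)^{2k}.
\end{align*}
In the $k$-th term, extracting the coefficient of $x^n$ amounts to extracting the coefficient of $x^k$ in $(x-c)^{2k}$. That coefficient is $\binom{2k}{k}(-c)^{k}$. Summing over $k$ gives exactly
\begin{align*}
\sum_{k=0}^{n}\binom{n}{k}\binom{2k}{k}(b+2c)^{n-k}(-c)^k,
\end{align*}
which is the claimed formula.

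There is no real obstacle. The only nontrivial step is finding the decomposition $x^2+bx+c^2=(b+2c)x+(x-c)^2$, and after that everything is a routine application of the binomial theorem. As a sanity check, setting $c=1$ and $b=0$ gives $T_n(0,1)=\sum_k\binom{n}{k}\binom{2k}{k}2^{n-k}(-1)^k$, which matches the classical identity for $\binom{n}{n/2}$-type central coefficients. The argument is purely polynomial, so it holds for all integers $b,c$, including $c=0$, where both sides equal $b^n$.
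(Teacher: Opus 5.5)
Your proof is correct and complete. The identity $x^2+bx+c^2=(b+2c)x+(x-c)^2$ holds. In the $k$-th term of the binomial expansion, the $x^n$-coefficient is $(b+2c)^{n-k}[x^k](x-c)^{2k}=(b+2c)^{n-k}\binom{2k}{k}(-c)^k$, which gives the formula directly.

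The paper gives no proof of this lemma; it only quotes it from Wang and Sun. So your argument is a self-contained justification rather than an alternative to one in the text.

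Your argument uses the same mechanism as the paper's Lemma~\ref{lem:identity}. Writing $x^2+(b+a)x+c=ax+(x^2+bx+c)$ and expanding gives $T_n(b+a,c)=\sum_k\binom{n}{k}T_k(b,c)a^{n-k}$. The present lemma is exactly the specialization $b\mapsto -2c$, $c\mapsto c^2$, $a\mapsto b+2c$. There the inner trinomial becomes the perfect square $(x-c)^2$, so $T_k(-2c,c^2)=\binom{2k}{k}(-c)^k$ collapses to a single binomial term.

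Your direct version also shows that no side condition such as $a\neq 0$ (here $b+2c\neq 0$) is needed. Both sides are polynomials in $b$ and $c$, so the identity holds for all integers $b,c$, including the degenerate cases you checked.
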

\begin{corollary}Let $p>3$ be a prime and $b,c$ be integers. If $p\nmid (b+2c)$, then
\begin{align}\label{T2kP}
\sum^{p-1}_{k=0}\binom{2k}{k}^2\frac{T_{2k}(b,c^2) }{(b+2c)^{2k}16^k}&\equiv\left( \frac{-1}{p}\right) \sum_{k=0}^{p-1}\binom{2k}{k}\frac{\mathcal{P}_{k}c^k}{(b+2c) ^{k}16^k}\pmod{p^2} ,\\
\label{T2kP1}
\sum_{k=0}^{p-1}\mathcal{P}_{k}\frac{ T_{k}(b,c^2)}{(b+2c) ^{k}16^k} &\equiv\left( \frac{-1}{p}\right)\sum^{p-1}_{k=0}\binom{2k}{k}^2\binom{4k}{2k}\frac{ c^{2k}}{(b+2c)^{2k}16^k} \pmod{p^2} .
\end{align}
\end{corollary}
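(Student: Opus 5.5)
Both congruences will follow from \eqref{congCLFS} once we pick the right sequences, with Lemma~\ref{wanglemma} supplying the binomial transform. Divide Lemma~\ref{wanglemma} by $(b+2c)^n$:
\begin{align*}
\frac{T_n(b,c^2)}{(b+2c)^n}=\sum_{k=0}^{n}\binom{n}{k}\binom{2k}{k}\left(\frac{-c}{b+2c}\right)^k .
\end{align*}
Since $p\nmid(b+2c)$, every term is a $p$-adic integer.

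For \eqref{T2kP}, put $a_n:=T_n(b,c^2)/(b+2c)^n$ and $e_k:=\binom{2k}{k}\bigl(c/(b+2c)\bigr)^k$. The display above then reads
\begin{align*}
a_n=\sum_{k=0}^n\binom{n}{k}(-1)^k e_k ,
\end{align*}
so $a_n=e_n^*$, and by the involution property $a_n^*=e_n$. Substituting into \eqref{congCLFS} gives
\begin{align*}
\sum_{k=0}^{(p-1)/2}\binom{2k}{k}^2\frac{T_{2k}(b,c^2)}{(b+2c)^{2k}16^k}
\equiv\left(\frac{-1}{p}\right)\sum_{k=0}^{p-1}\binom{2k}{k}\frac{\mathcal{P}_k c^k}{(b+2c)^k16^k}\pmod{p^2}.
\end{align*}
It remains to extend the left-hand sum to $k\le p-1$. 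For $(p-1)/2<k\le p-1$ we have $p\mid\binom{2k}{k}$, so $p^2\mid\binom{2k}{k}^2$, and the added terms vanish modulo $p^2$. This proves \eqref{T2kP}.

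For \eqref{T2kP1}, use the involution in the other direction. Put $\alpha_n:=e_n=\binom{2n}{n}\bigl(c/(b+2c)\bigr)^n$. Then $\alpha_n^*=T_n(b,c^2)/(b+2c)^n$. Applying \eqref{congCLFS} to $(\alpha_n)$ gives
\begin{align*}
\sum_{k=0}^{(p-1)/2}\binom{2k}{k}^2\binom{4k}{2k}\frac{c^{2k}}{(b+2c)^{2k}16^k}
\equiv\left(\frac{-1}{p}\right)\sum_{k=0}^{p-1}\mathcal{P}_k\frac{T_k(b,c^2)}{(b+2c)^k16^k}\pmod{p^2}.
\end{align*}
Multiply both sides by $\left(\frac{-1}{p}\right)$, which is its own inverse. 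Then extend the left sum to $k\le p-1$ by the same argument, since $p^2\mid\binom{2k}{k}^2$ for $(p-1)/2<k<p$. This yields \eqref{T2kP1}.

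The only points that need care are the following.
\begin{itemize}[leftmargin=*]
\item The sign bookkeeping: the factor $(-c)^k$ in Lemma~\ref{wanglemma} must be exactly absorbed by the $(-1)^k$ in the definition of the dual sequence.
\item All terms must be $p$-adically integral, which is why we need $p\nmid(b+2c)$.
\item The truncation at $(p-1)/2$ versus $p-1$, which is handled by $p^2\mid\binom{2k}{k}^2$ for $(p-1)/2<k<p$.
\end{itemize}
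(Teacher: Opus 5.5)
Your proposal is correct and is exactly the paper's argument. The paper simply says both congruences are immediate from Lemma~\ref{wanglemma} and \eqref{congCLFS}, and you supply the details: $T_n(b,c^2)/(b+2c)^n$ is the dual of $\binom{2n}{n}\bigl(c/(b+2c)\bigr)^n$, \eqref{congCLFS} is applied in each direction via the involution, and the terms with $p^2\mid\binom{2k}{k}^2$ for $(p-1)/2<k<p$ are discarded.
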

\begin{proof}
\eqref{T2kP1} and \eqref{T2kP} are immediate consequences of Lemma~\ref{wanglemma} and \eqref{congCLFS}.
\end{proof}
In view of \eqref{T2kP} and \cite[Conjecture 3.1]{ZHSun2015}, 
we are led to pose the following conjecture.

\begin{conjecture}
Let $p>3$ be a prime, $n\in \{\pm 156816, \pm1584, \pm 784, \pm 144, \pm 48, \pm 9\}$ and $n \not\equiv 0, \pm 16 \pmod p$. Then
\begin{equation*}
  \sum^{p-1}_{k=0}\binom{2k}{k}^2\frac{T_{2k}(n,64) }{(n+16)^{2k}16^k} \equiv \left( \frac{-n(n+16)}{p}\right)\sum_{k=0}^{p-1}\binom{2k}{k}^2\binom{4k}{2k}\frac{1}{n^{2k}} \pmod{p^2}.
\end{equation*}
\end{conjecture}

We also establish the following result, which connects $T_{2n}(b,c)$ 
and $\mathcal{P}_{2n}$ modulo $p^2$, for any prime $p > 3$.
\begin{theorem}
Let $p>3$ be a prime and $b,c$ be integers. If $p\nmid b$, then 
\begin{equation}
\sum_{k=0}^{p-1}\binom{2k}{k}^2\frac{T_{2k}\left( b,c\right) }{\left( 4b\right) ^{2k}}\equiv \left( \frac{-1}{p}\right) \sum_{k=0}^{\left( p-1\right) /2}\binom{2k}{k}\frac{\mathcal{P}_{2k}c^k}{\left( 16b\right) ^{2k}}\pmod{p^2}.
\end{equation}
\end{theorem}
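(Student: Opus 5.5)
The plan is to specialize \eqref{congCLFS} of Theorem~\ref{theorem1}, choosing the sequence $(a_k)_{k\ge0}$ so that its dual is supported only on even indices. Since $p\nmid b$, we may set
\begin{equation*}
e_k:=\begin{cases}\binom{2j}{j}\dfrac{c^j}{b^{2j}} & \text{if } k=2j \text{ is even},\\[4pt] 0 & \text{if } k \text{ is odd},\end{cases}
\qquad a_n:=\sum_{k=0}^{n}\binom{n}{k}e_k .
\end{equation*}
Both are sequences of $p$-adic integers. By \eqref{eq:Tnbc} we have $a_n=T_n(b,c)/b^n$. This is the same decomposition already used in the proof of Corollary~\ref{corollary4.4}.

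The next step is to compute the dual sequence explicitly. Substituting the definition of $a_k$ into \eqref{eq:dual}, interchanging the sums, and using $\binom{n}{k}\binom{k}{j}=\binom{n}{j}\binom{n-j}{k-j}$ gives
\begin{equation*}
a_n^*=\sum_{j=0}^{n}\binom{n}{j}e_j\sum_{k=j}^{n}\binom{n-j}{k-j}(-1)^k=(-1)^n e_n .
\end{equation*}
The inner sum vanishes unless $j=n$. Hence $a_{2j}^*=\binom{2j}{j}c^j/b^{2j}$ and $a_{2j+1}^*=0$.

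Now insert this into \eqref{congCLFS}. On the left, $a_{2k}/16^k=T_{2k}(b,c)/(16^kb^{2k})=T_{2k}(b,c)/(4b)^{2k}$. On the right, only even indices $k=2j\le p-1$ survive, i.e.\ $0\le j\le (p-1)/2$. Each surviving term equals
\begin{equation*}
\frac{\mathcal{P}_{2j}}{16^{2j}}\binom{2j}{j}\frac{c^j}{b^{2j}}=\binom{2j}{j}\frac{\mathcal{P}_{2j}c^j}{(16b)^{2j}},
\end{equation*}
which is exactly the right-hand side of the theorem.

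It remains to extend the left-hand sum from $k\le (p-1)/2$ to $k\le p-1$. For $(p-1)/2<k\le p-1$ we have $p\mid\binom{2k}{k}$, so $\binom{2k}{k}^2\equiv0\pmod{p^2}$. The extra terms are $p$-integral because $p\nmid b$ and $p>3$, so they vanish modulo $p^2$.

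None of these steps is a real obstacle, since the whole argument is formal once \eqref{congCLFS} is available. The only points needing care are the inversion $a^*_n=(-1)^ne_n$ (the sign disappears on even indices) and checking $p$-integrality so that \eqref{congCLFS} applies.
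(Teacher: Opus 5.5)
Your proof is correct, but it is organized differently from the paper's. The paper never invokes \eqref{congCLFS} here. It truncates the left side using $p\mid\binom{2k}{k}$ for $p/2<k<p$. It then expands $T_{2k}(b,c)$ by \eqref{eq:Tnbc}, interchanges the sums, and splits $\binom{2k}{2i}=\sum_{m}\binom{k}{m}\binom{k}{2i-m}$ by Vandermonde. Finally it evaluates each inner sum with Lemmas~\ref{lemma2.3} and~\ref{lemma2.2} and recognizes \eqref{eq:CLF}. In effect it reruns the proof of \eqref{congCLFS} for this particular sequence.

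You instead use \eqref{congCLFS} as a black box. All you need is the binomial-inversion observation that $a_n=T_n(b,c)/b^n$ has dual $a_n^*=(-1)^n e_n$, supported on even indices. This is the same decomposition as in Corollary~\ref{corollary4.4}.

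Your route is shorter. It shows directly why $\mathcal{P}_{2k}$ and the cutoff $(p-1)/2$ appear, and it puts the theorem alongside the other corollaries of \eqref{congCLFS}. The paper's route is self-contained and avoids computing a dual sequence, but repeats the Vandermonde/Lemma~\ref{lemma2.2} computation. A minor point: $p$-integrality of the discarded terms only needs $p\nmid 4b$, not $p>3$.
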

\begin{proof} Applying the fact that $\binom{2k}{k}\equiv 0\pmod p$ for $p/2<k<p$, and in view of \eqref{eq:Tnbc}, we have
\begin{align*}
\sum_{k=0}^{p-1}\binom{2k}{k}^2\frac{T_{2k}\left( b,c\right) }{\left( 4b\right) ^{2k}}&\equiv \sum_{k=0}^{\left( p-1\right) /2}\binom{2k}{k}^2\frac{1}{\left( 4b\right) ^{2k}}\sum_{i=0}^{k}\binom{2k}{2i}\binom{2i}{i}b^{2k-2i}c^i \pmod{p^2}  \\
&=\sum_{i=0}^{\left( p-1\right) /2}\binom{2i}{i}\frac{c^i}{b^{2i}}\sum_{k=i}^{\left( p-1\right) /2}\binom{2k}{k}^2\binom{2k}{2i}\frac{1}{16^k} \pmod{p^2}\\
&=\sum_{i=0}^{\left( p-1\right) /2}\binom{2i}{i}\frac{c^i}{b^{2i}}\sum_{k=i}^{\left( p-1\right) /2}\binom{2k}{k}^2\frac{1}{16^k}\sum_{m=0}^{2i}\binom{k}{m}\binom{k}{2i-m}\pmod{p^2} \\
&=\sum_{i=0}^{\left( p-1\right) /2}\binom{2i}{i}\frac{c^i}{b^{2i}}\sum_{m=0}^{2i}\sum_{k=i}^{\left( p-1\right) /2}\binom{2k}{k}^2\binom{k}{m}\binom{k}{2i-m}\frac{1}{16^k} \pmod{p^2}.
\end{align*}
Combining Lemmas \ref{lemma2.3} and \ref{lemma2.2} with \eqref{eq:CLF}, we deduce that
\begin{align*}
\sum_{k=0}^{p-1}\binom{2k}{k}^2\frac{T_{2k}\left( b,c\right) }{\left( 4b\right) ^{2k}}\equiv \left( \frac{-1}{p}\right) \sum_{i=0}^{\left( p-1\right) /2}\binom{2i}{i}\frac{\mathcal{P}_{2i}c^i}{\left( 16b\right) ^{2i}}\pmod{p^2}.
\end{align*}
This completes the proof of the theorem.
\end{proof}
We now turn our attention to some applications of Theorem \ref{theorem2}. First, we establish the following result.
\begin{corollary}Let $p>3$ be a prime and $a,b,c$ be integers. If $p\nmid a$, then
\begin{equation}\label{4.13}
\sum_{k=0}^{p-1}\binom{2k}{k}\dfrac{T_k(a+b,c)}{a^k4^k}\equiv \dfrac{p}{4^{p-1}}\sum_{k=0}^{p-1}\frac{T_k(b,c)(-1)^k(1-pH_k)}{(2k+1)a^k}\pmod{p^2}.
\end{equation}
\end{corollary}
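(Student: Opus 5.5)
We apply Theorem~\ref{theorem2} to the sequence $a_k := (-1)^k T_k(b,c)/a^k$. This is a sequence of $p$-adic integers because $p\nmid a$. The work is to identify its dual sequence and then simplify the right-hand side of \eqref{theorem2cong}.

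\textbf{Step 1: the dual sequence.} By definition \eqref{eq:dual},
\begin{align*}
a_n^* = \sum_{k=0}^{n}\binom{n}{k}(-1)^k a_k = \frac{1}{a^n}\sum_{k=0}^{n}\binom{n}{k}T_k(b,c)\,a^{n-k}.
\end{align*}
Lemma~\ref{lem:identity} evaluates the inner sum as $T_n(b+a,c)$. Hence
\begin{align*}
a_n^* = \frac{T_n(a+b,c)}{a^n}.
\end{align*}
Substituting this into the left-hand side of \eqref{theorem2cong} gives exactly the left-hand side of \eqref{4.13}.

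\textbf{Step 2: the right-hand side.} Theorem~\ref{theorem2} gives
\begin{align*}
\frac{p}{4^{p-1}}\sum_{k=0}^{p-1}\frac{(-1)^kT_k(b,c)}{(2k+1)a^k} - pH_{\frac{p-1}{2}}\,a_{\frac{p-1}{2}}.
\end{align*}
This differs from the target expression, whose summand carries the factor $(1-pH_k)$, so the two forms must be reconciled.

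The cleanest route is to bypass the final reduction in the proof of Theorem~\ref{theorem2} and use its exact intermediate identity instead. That identity, obtained from \eqref{lemma2.2+} together with Babbage's congruence, reads
\begin{align*}
\sum_{k=0}^{p-1}\binom{2k}{k}\frac{a_k^*}{4^k}\equiv \frac{p}{4^{p-1}}\sum_{k=0}^{p-1}\binom{p-1}{k}\frac{(-1)^k a_k}{2k+1}\pmod{p^2}.
\end{align*}
By \eqref{eq:binom1}, $(-1)^k\binom{p-1}{k}\equiv 1-pH_k \pmod{p^2}$. The factor $p$ in front then only requires this modulo $p$, except at the term $k=(p-1)/2$. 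There the denominator $2k+1=p$ cancels that factor $p$, so the full mod-$p^2$ accuracy of \eqref{eq:binom1} is needed, which it provides.

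Substituting $(-1)^k a_k = T_k(b,c)/a^k$ and this expansion of the binomial coefficient gives exactly \eqref{4.13}. In this form the term $k=(p-1)/2$ is kept inside the sum, which is how the extra correction term of Theorem~\ref{theorem2} is absorbed.

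\textbf{Main obstacle.} The delicate point is this bookkeeping at $k=(p-1)/2$: checking that the $(1-pH_k)$ form is equivalent to the form stated in Theorem~\ref{theorem2}. If that equivalence needs verification, I would compute the $k=(p-1)/2$ term separately. Using \eqref{eq:binom1} and $4^{p-1}\equiv 1\pmod p$, this term contributes
\begin{align*}
\frac{(1-pH_{(p-1)/2})\,a_{(p-1)/2}}{4^{p-1}}.
\end{align*}
It remains to check that this is congruent modulo $p^2$ to the corresponding contribution in the statement of Theorem~\ref{theorem2}. All the other terms agree modulo $p^2$ at once, because of the factor $p$ in front.
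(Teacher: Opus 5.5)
Your proof is correct and follows the paper's route: take $a_k=(-1)^kT_k(b,c)/a^k$, identify $a_k^*=T_k(a+b,c)/a^k$ via Lemma~\ref{lem:identity}, and apply Theorem~\ref{theorem2}. You work from the pre-reduction form $\frac{p}{4^{p-1}}\sum_k\binom{p-1}{k}\frac{(-1)^ka_k}{2k+1}$ together with \eqref{eq:binom1}, which makes your final ``remaining check'' unnecessary; that check also holds at once, since the two $k=\frac{p-1}{2}$ contributions differ by $pH_{\frac{p-1}{2}}a_{\frac{p-1}{2}}\bigl(4^{1-p}-1\bigr)\equiv0\pmod{p^2}$, and this is exactly the reconciliation the paper's one-line proof leaves implicit.
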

\begin{proof}
\eqref{4.13} follows by applying Lemma \ref{lem:identity} and \eqref{theorem2cong}.
\end{proof}

\begin{remark}
Applying \eqref{eq:Tnbc} together with \eqref{theorem2cong} recovers the following special case:
\begin{equation*}
\sum_{k=0}^{p-1}\binom{2k}{k}\dfrac{T_k(b,c)}{(4b)^k}\equiv p\sum_{k=0}^{(p-1)/2}\binom{2k}{k}\frac{c^k}{(4k+1)b^{2k}}\pmod{p^2},
\end{equation*}
where $p\nmid b$. This result already appears as Theorem 1.2 in \cite{wangsun}.

In addition, by Lemma \ref{wanglemma} together with \eqref{theorem2cong}, we obtain the following case:
\begin{equation*}
\sum_{k=0}^{p-1}\binom{2k}{k}\dfrac{T_k(b,c^2)}{(b+2c)^k4^k}\equiv \dfrac{p}{4^{p-1}}\sum_{k=0}^{p-1}\frac{\binom{2k}{k}c^k}{(2k+1)(b+2c)^k}-p H_{\frac{p-1}{2}}\frac{\binom{p-1}{\frac{p-1}{2}}c^\frac{p-1}{2}}{(b+2c)^\frac{p-1}{2}}\pmod{p^2},
\end{equation*}
where $p\nmid c(b+2c)$. This congruence coincides with an intermediate step in the proof of \cite[Theorem 1.1]{wangsun}, where the right-hand side is further evaluated in closed form modulo $p^2$.

\end{remark}

\begin{corollary}\label{coro4.11}
Let $p>3$ be a prime and $b,c$ be integers. If $p\nmid (b+2c)$, then
\begin{equation*}\label{4.14}
 \dfrac{p}{4^{p-1}}\sum_{k=0}^{p-1}\frac{T_k(b,c^2)}{(2k+1)(b+2c)^k}\equiv \left( 1-2p q_p(2)\right)  \sum_{k=0}^{p-1}\binom{2k}{k}^2\dfrac{c^k}{(b+2c)^k4^k} \pmod{p^2}.
\end{equation*}
\end{corollary}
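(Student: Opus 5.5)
The plan is to specialize Theorem~\ref{theorem2} to a sequence whose dual is a single central binomial coefficient, using Lemma~\ref{wanglemma}, and then to identify the leftover harmonic-number term modulo $p$.

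\textbf{Choice of sequence.} Put $n=(p-1)/2$ and
\[
a_k:=\frac{T_k(b,c^2)}{(b+2c)^k}\qquad(k\ge0),
\]
which are $p$-adic integers because $p\nmid(b+2c)$. Lemma~\ref{wanglemma} gives
\[
a_m=\sum_{k=0}^{m}\binom{m}{k}\binom{2k}{k}\Bigl(\frac{-c}{b+2c}\Bigr)^k=\sum_{k=0}^m\binom{m}{k}(-1)^k v_k,
\qquad v_k:=\binom{2k}{k}\Bigl(\frac{c}{b+2c}\Bigr)^k .
\]
Thus $a=v^*$. Since duality is an involution, this gives
\[
a_k^*=\binom{2k}{k}\frac{c^k}{(b+2c)^k}.
\]

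\textbf{Applying Theorem~\ref{theorem2}.} The left-hand side of \eqref{theorem2cong} then becomes exactly
\[
S:=\sum_{k=0}^{p-1}\binom{2k}{k}^2\frac{c^k}{(b+2c)^k4^k}.
\]
Theorem~\ref{theorem2} therefore yields
\[
S\equiv \frac{p}{4^{p-1}}\sum_{k=0}^{p-1}\frac{T_k(b,c^2)}{(2k+1)(b+2c)^k}-pH_{n}\,a_{n}\pmod{p^2}.
\]

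\textbf{The correction term.} Because this term carries a factor $p$, it suffices to know $H_n$ and $a_n$ modulo $p$.

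First, from $H_{(p-1)/2}\equiv-2q_p(2)\pmod p$ (Lehmer, as used earlier in the paper) we get $-pH_n a_n\equiv 2pq_p(2)\,a_n\pmod{p^2}$.

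Second, for $a_n$ we use $\binom{n}{k}\equiv\binom{2k}{k}/(-4)^k\pmod p$ for $0\le k\le n$. This is the mod-$p$ shadow of Lemma~\ref{lemma2.3}, or follows directly from $\binom{n}{k}=\binom{-1/2-p/2}{k}$. Substituting it into the Lemma~\ref{wanglemma} expansion of $a_n$ gives
\[
a_n\equiv\sum_{k=0}^{n}\binom{2k}{k}^2\frac{c^k}{4^k(b+2c)^k}\pmod p .
\]
Terms with $n<k<p$ in $S$ vanish modulo $p$ since $p\mid\binom{2k}{k}$ there. Hence $a_n\equiv S\pmod p$, and so $pH_na_n\equiv -2pq_p(2)S\pmod{p^2}$.

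\textbf{Conclusion.} Substituting, we obtain $S\equiv \frac{p}{4^{p-1}}\sum_k\frac{T_k(b,c^2)}{(2k+1)(b+2c)^k}+2pq_p(2)S$, that is,
\[
\frac{p}{4^{p-1}}\sum_{k=0}^{p-1}\frac{T_k(b,c^2)}{(2k+1)(b+2c)^k}\equiv(1-2pq_p(2))\,S\pmod{p^2}.
\]

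\textbf{Main obstacle.} The only step needing care is the sign bookkeeping in the duality step, namely checking that $a^*$ carries $c^k$ rather than $(-c)^k$, together with the mod-$p$ identification $a_{(p-1)/2}\equiv S$. Everything else is a direct substitution into Theorem~\ref{theorem2}.
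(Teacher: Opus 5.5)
Your proof is correct and is essentially the paper's argument. It uses the same choice $a_k=T_k(b,c^2)/(b+2c)^k$, Lemma~\ref{wanglemma} to get $a_k^*=\binom{2k}{k}c^k/(b+2c)^k$, and Theorem~\ref{theorem2}. It then uses Lehmer's congruence with $\binom{(p-1)/2}{k}\equiv\binom{2k}{k}/(-4)^k\pmod p$ to get $pH_{(p-1)/2}a_{(p-1)/2}\equiv-2pq_p(2)S\pmod{p^2}$; the paper phrases this via $a_{(p-1)/2}\equiv\left(\frac{b+2c}{p}\right)T_{(p-1)/2}(b,c^2)\pmod p$, which is the same step.

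One harmless slip: $(p-1)/2=-\frac12+\frac p2$, not $-\frac12-\frac p2$. This does not matter, since either way $\binom{(p-1)/2}{k}\equiv\binom{-1/2}{k}\pmod p$ for $0\le k\le (p-1)/2$.
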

\begin{proof}
Define $a_k:= \frac{T_k(b,c^2)}{(b+2c)^k}$ for $k\in \{0,1,2,\ldots,p-1\}$. Applying Lemma \ref{wanglemma} and \eqref{theorem2cong}, we obtain
\begin{align}\label{4.15}
 \dfrac{p}{4^{p-1}}\sum_{k=0}^{p-1}\frac{T_k(b,c^2)}{(2k+1)(b+2c)^k} \equiv \sum_{k=0}^{p-1}\binom{2k}{k}^2\dfrac{c^k}{(b+2c)^k4^k} +p \left( \frac{b+2c}{p}\right) T_{\frac{p-1}{2}}(b,c^2)H_{\frac{p-1}{2}}\pmod{p^2}.
\end{align}
Recall that $\binom{\frac{p-1}{2}}{i}\equiv \frac{\binom{2i}{i}}{(-4)^i} \pmod p$ for $i\in\{0,1,2,\ldots,\frac{p-1}{2}\}$, and $\binom{2i}{i}\equiv 0\pmod p$ for $p/2<i<p$. These, together with Lemma \ref{wanglemma} and Lehmer's congruence $H_{\frac{p-1}{2}}\equiv -2q_p(2)\pmod p$, give
\begin{align*}
T_{\frac{p-1}{2}}(b,c^2)H_{\frac{p-1}{2}}\equiv -2q_p(2)\left( \frac{b+2c}{p}\right) \sum_{k=0}^{p-1}\binom{2k}{k}^2\dfrac{c^k}{(b+2c)^k4^k}\pmod p.
\end{align*} 
Substituting the above expression into \eqref{4.15} completes the proof.
\end{proof}
\begin{remark}
The reader may consult \cite{ZHSun2011} for some special cases of Corollary \ref{coro4.11}.
\end{remark}

\section{Some applications of the congruence \eqref{th1c1}}\label{section4}
In this section we apply the \texttt{Sigma} package to establish some new
combinatorial identities. Using these together with \eqref{th1c1}, we
derive some congruences similar to \eqref{eq:ZHSun}.

\begin{lemma}\label{lem:harmonic}
Let $n,i$ be positive integers with $n\ge i$. Then
\[
\sum_{k=1}^{n}\frac{\binom{n}{k}\binom{n+k}{k}(-1)^k}{k\binom{i+k}{k}}=-2H_n+H_i.
\]
\end{lemma}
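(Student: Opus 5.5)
The plan is to expand the weight $1/\bigl(k\binom{i+k}{k}\bigr)$ in partial fractions in $k$. This reduces the sum to evaluations of $\sum_k\binom{n}{k}\binom{n+k}{k}(-1)^k/(k+j)$, which are either known or vanish.

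\textbf{Step 1 (partial fractions).} Since $\frac{1}{k\binom{i+k}{k}}=\frac{i!}{k(k+1)\cdots(k+i)}$, the standard decomposition gives
\begin{equation*}
\frac{1}{k\binom{i+k}{k}}=\sum_{j=0}^{i}\binom{i}{j}\frac{(-1)^j}{k+j}\qquad(k\ge1).
\end{equation*}
Writing $S_j:=\sum_{k=1}^{n}\binom{n}{k}\binom{n+k}{k}\frac{(-1)^k}{k+j}$, the left-hand side of the lemma equals $\sum_{j=0}^{i}(-1)^j\binom{i}{j}S_j$.

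\textbf{Step 2 (the term $j=0$).} By \eqref{gould3.122}, $S_0=-2H_n$.

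\textbf{Step 3 (the terms $1\le j\le i$).} I will establish the rational-function identity
\begin{equation*}
f(x):=\sum_{k=0}^{n}\binom{n}{k}\binom{n+k}{k}\frac{(-1)^k}{x+k}=(-1)^n\frac{(x-1)(x-2)\cdots(x-n)}{x(x+1)\cdots(x+n)}.
\end{equation*}
Both sides are proper rational functions with simple poles at $x=-k$ for $0\le k\le n$, so it suffices to compare residues. At $x=-k$, the right-hand side has residue
\begin{equation*}
(-1)^n\frac{\prod_{m=1}^n(-k-m)}{\prod_{m\ne k}(m-k)}=\frac{(n+k)!/k!}{(-1)^k k!\,(n-k)!}=(-1)^k\binom{n}{k}\binom{n+k}{k},
\end{equation*}
which matches the left-hand side. (Alternatively, this could be certified by Zeilberger's algorithm.)

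Because $1\le j\le i\le n$, the numerator vanishes at $x=j$, so $f(j)=0$. Removing the $k=0$ term then gives $S_j=-1/j$.

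\textbf{Step 4 (assembly).} The sum equals
\begin{equation*}
-2H_n-\sum_{j=1}^{i}\binom{i}{j}\frac{(-1)^j}{j}=-2H_n+H_i,
\end{equation*}
by \eqref{hnidentity} with $m=i$.

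The only step requiring care is Step 3, namely the sign bookkeeping in the residue check and the use of the hypothesis $i\le n$, which is exactly what makes $f(j)$ vanish. The rest is routine.
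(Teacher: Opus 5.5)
Your proof is correct, and it follows a genuinely different route from the paper's. The paper obtains the identity by computer algebra. \texttt{Sigma} finds a creative-telescoping recurrence in $n$ (with $i$ symbolic) and expresses the sum through d'Alembertian solutions fixed by initial values. The resulting nested sums are truncated at $i$ using $n\ge i$, and two further \texttt{Sigma}-derived closed forms in $i$ are substituted. You instead split $i!/\bigl(k(k+1)\cdots(k+i)\bigr)$ into partial fractions in $k$. This reduces everything to the single rational-function identity $f(x)=(-1)^n\prod_{m=1}^{n}(x-m)/\prod_{m=0}^{n}(x+m)$. Your residue check for it is right, signs included, and the zeros of $f$ at $x=1,\dots,n$ give $S_j=-1/j$ at once.

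Your route gives a short, hand-checkable proof that shows exactly where $i\le n$ enters: it places each $x=j$ among the zeros of $f$. In the paper, the hypothesis appears only through the truncation of the software output. Your argument can also be made fully self-contained. Indeed, $S_0=\lim_{x\to0}\bigl(f(x)-1/x\bigr)$ is the derivative at $0$ of $\prod_{m=1}^{n}(m-x)/(m+x)$, namely $-2H_n$, so you do not even need \eqref{gould3.122}.

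The same $f$ also recovers the other \texttt{Sigma} identities of Section~\ref{section4}. For the weight $1/\bigl((2k+1)\binom{i+k}{k}\bigr)$, only the pole at $k=-\tfrac12$ survives, with coefficient $4^i/\bigl(2\binom{2i}{i}\bigr)$. Since $f(\tfrac12)=2/(2n+1)$, this gives $4^i/\bigl(\binom{2i}{i}(2n+1)\bigr)$. For $1/\bigl((k+1)\binom{i+k}{k}\bigr)$, the double pole at $k=-1$ gives $-i\,f'(1)=i/\bigl(n(n+1)\bigr)$.

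The paper's approach, in exchange, is fully mechanical and needs no insight into the right decomposition. That is why a single pipeline produces all the identities of that section.
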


\begin{proof}
We establish this identity (and other identities similar to it throughout this paper) by using the Mathematica package \texttt{Sigma} (Schneider \cite{sch2007}) as follows. We first insert the sum into \texttt{Sigma}:
\begin{align*}
\texttt{In[1]} &:= mySum=\texttt{SigmaSum[SigmaBinomial[n, k]*SigmaBinomial[n + k, k]}*\\
& \qquad\texttt{SigmaPower[-1, k]/(SigmaBinomial[i + k, k]*k), \{k, 1, n\}]}\\
\texttt{Out[1]} &=\sum_{k=1}^{n}\frac{\binom{n}{k}\binom{n+k}{k}(-1)^k}{k\binom{i+k}{k}}
\end{align*}
By creative telescoping (Zeilberger \cite{zei1991}), \texttt{Sigma} finds a recurrence for this sum:
\begin{align*}
\texttt{In[2]} &:= \texttt{rec = GenerateRecurrence[mySum,n][[1]]}\\
\texttt{Out[2]} &= \scalebox{0.85}{\text{-(1+n)(1-i+n)SUM[n]-(1+i)(3+2n)SUM[1+n]+(2+n)(2+i+n) SUM[2+n]==-2(3+2n)}}
\end{align*}
The correctness of this recurrence follows from a telescoping certificate (obtainable via \texttt{CreativeTelescoping[mySum]}), checkable directly by polynomial arithmetic on the summand. We then solve the recurrence for its d'Alembertian solutions --- nested sums and products built from objects already present in it:
\begin{align*}
\texttt{In[3]} &:= \texttt{recSol=SolveRecurrence[rec, SUM[n]]}\\
\texttt{Out[3]} &= \scalebox{0.85}{$\left\{\{0,1\},\left\{0,\frac{(2 i+1)\left(\underset{\iota _1=1}{\overset{n}{\sum}}\frac{(i-\iota _1+1)_{\iota _1}}{(i+2)_{\iota _1}}\right)}{i^2}+\frac{(i+1)\left(\underset{\iota _1=1}{\overset{n}{\sum}}\frac{(i-\iota _1) (i-\iota _1+1)_{\iota _1}}{\iota _1 (i+2)_{\iota _1}}\right)}{i^2}\right\},\left\{1,-2\left(\underset{\iota _1=1}{\overset{n}{\sum }}\frac{1}{\iota _1}\right)\right\}\right\}$}
\end{align*}
Since \texttt{mySum} itself satisfies the same recurrence, it must be a linear combination of these solutions; this combination is fixed by matching finitely many initial values:
\begin{align*}
\texttt{In[4]} &:= \texttt{FindLinearCombination[recSol, mySum, 2]}\\
\texttt{Out[4]} &= -2\left(\underset{\iota _1=1}{\overset{n}{\sum}}\frac{1}{\iota _1}\right)+\frac{2(2 i+1)\left(\underset{\iota _1=1}{\overset{n}{\sum }}\frac{(i-\iota _1+1)_{\iota _1}}{(i+2)_{\iota _1}}\right)}{i (i+1)}+\frac{2\left(\underset{\iota _1=1}{\overset{n}{\sum}}\frac{(i-\iota _1)(i-\iota _1+1)_{\iota _1}}{\iota _1 (i+2)_{\iota_1}}\right)}{i}
\end{align*}
As both sides (\texttt{Out[1]} and \texttt{Out[4]}) agree for finitely many $n$ and satisfy the same recurrence, they agree for all $n\ge0$.

Since $n\ge i$, the sums with upper limit $n$ in \texttt{Out[4]} truncate to upper limit $i$, giving
\begin{align}\label{5.3}
\sum_{k=1}^{n}\frac{\binom{n}{k}\binom{n+k}{k}(-1)^k}{k\binom{i+k}{k}}=-2H_n+\frac{2(2i+1)}{i(i+1)}\sum_{k=1}^{i}\frac{\binom{i}{k}}{\binom{i+1+k}{k}}+\frac{2}{i}\sum_{k=1}^{i}\frac{(i-k)\binom{i}{k}}{k\binom{i+1+k}{k}}.
\end{align}

By \texttt{Sigma} again, we find the following two closed forms for the sums on the right-hand side of \eqref{5.3}:
\begin{align*}
\sum_{k=1}^{i}\frac{\binom{i}{k}}{\binom{i+1+k}{k}}=-1+\frac{(i+1)4^i}{(2i+1)\binom{2i}{i}}
\end{align*}
and
\begin{align*}
\sum_{k=1}^{i}\frac{(i-k)\binom{i}{k}}{k\binom{i+1+k}{k}}=\frac{2 i+1}{i+1}-\frac{4^i}{\binom{2i}{i}}+\frac{iH_i}{2}.
\end{align*}
Finally, substituting these two identities into \eqref{5.3} and simplifying gives the result. This proves Lemma \ref{lem:harmonic}.
\end{proof}
\begin{remark}
Although Lemma \ref{lem:harmonic} is stated for positive integers $i$, the identity in fact continues to hold at $i=0$: taking $i=0$ recovers identity \cite[(3.122)]{gould}, since $H_0=0$. Thus Lemma \ref{lem:harmonic} holds more generally for every nonnegative integer $i\le n$.
\end{remark}
\begin{theorem}Let $p>3$ be a prime. Let $(a_k)_{k\geqslant0}$ be any sequence of $p$-adic integers. Then
\begin{align*}
\sum_{k=0}^{(p-1)/2}\binom{2k}{k}^2\frac{H_k}{16^{k}}\left( a_k+\left(\frac{-1}{p}\right) a^*_k\right) \equiv \left( \frac{-1}{p}\right)2H_{(p-1)/2}\sum_{k=0}^{(p-1)/2}\binom{2k}{k}^2\frac{a^*_k }{16 ^{k}} \pmod{p^2}.
\end{align*}
\end{theorem}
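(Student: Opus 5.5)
The plan is to specialize \eqref{th1c1} to $b_k=H_k$ and evaluate the resulting inner sum over $i$ in closed form with Lemma~\ref{lem:harmonic}. Throughout, let $n=(p-1)/2$.

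\textbf{Step 1: the dual of $H_k$.} We first need $b_i^*=H_i^*$. I would use the classical identity
\begin{align*}
\sum_{m=0}^{i}\binom{i}{m}(-1)^mH_m=-\frac{1}{i}\quad (i\ge1),
\end{align*}
with $H_0^*=0$. It follows from \eqref{hnidentity} by inverse duality $(a^*)^*=a$. To see this, \eqref{hnidentity} says that the sequence $c_k=-1/k$ (with $c_0=0$) has dual $c_m^*=H_m$. Hence $H_i^*=(c^*)^*_i=c_i=-1/i$. Alternatively one can prove it directly by induction using Pascal's rule.

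\textbf{Step 2: specialize \eqref{th1c1}.} Taking $b_k=H_k$ in \eqref{th1c1} gives
\begin{align*}
\sum_{k=0}^{n}\binom{2k}{k}^2\frac{a_kH_k}{16^k}\equiv-\left(\frac{-1}{p}\right)\sum_{k=0}^{n}\binom{2k}{k}^2\frac{a_k^*}{16^k}\sum_{i=1}^{n}\frac{\binom{2i}{i}^2}{i\binom{i+k}{k}16^i}\pmod{p^2}.
\end{align*}

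\textbf{Step 3: rewrite the inner sum.} Since $\binom{n}{i}\binom{n+i}{i}=\binom{n+i}{2i}\binom{2i}{i}$, Lemma~\ref{lemma2.3} yields
\begin{align*}
\frac{\binom{2i}{i}^2}{16^i}\equiv(-1)^i\binom{n}{i}\binom{n+i}{i}\pmod{p^2}.
\end{align*}
The remaining factors $1/i$ and $1/\binom{i+k}{k}$ are $p$-adic units, because $i\le n<p$ and $i+k\le p-1$. So this congruence may be substituted into the inner sum and still holds modulo $p^2$.

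\textbf{Step 4: evaluate the inner sum.} After the substitution, the inner sum is exactly the sum in Lemma~\ref{lem:harmonic}, with that lemma's $n$ equal to $(p-1)/2$ and its $i$ equal to our $k$. The hypothesis $k\le n$ holds, and the case $k=0$ is covered by the remark following the lemma. Hence the inner sum is congruent to $-2H_n+H_k$. The right-hand side of Step 2 therefore becomes
\begin{align*}
\left(\frac{-1}{p}\right)\sum_{k=0}^{n}\binom{2k}{k}^2\frac{a_k^*}{16^k}\left(2H_n-H_k\right).
\end{align*}

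\textbf{Step 5: rearrange.} Moving the $H_k$ part of the right-hand side to the left gives exactly the asserted congruence. All $H_k$ with $k\le n$ are $p$-integral, so no denominators divisible by $p$ appear at any stage.

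I expect the only delicate point to be the bookkeeping in Step 1 together with the $i=0$ term. One must check that $H_0^*=0$, so that the $i=0$ term drops out; the inner sum then starts at $i=1$, matching Lemma~\ref{lem:harmonic}. One must also check that the sign from $H_i^*=-1/i$ combines with the overall sign to give $2H_n-H_k$ rather than its negative. Everything else is a direct substitution of results already established.
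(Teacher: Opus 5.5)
Your proposal is correct and follows the paper's proof essentially step for step. You specialize \eqref{th1c1} to $b_k=H_k$, which only needs to be defined for $0\le k\le (p-1)/2$, where it is $p$-integral. You then use \eqref{hnidentity} with duality to get $H_i^*=-1/i$ and $H_0^*=0$, rewrite $\binom{2i}{i}^2/16^i$ as $(-1)^i\binom{n}{i}\binom{n+i}{i}$ via Lemma~\ref{lemma2.3}, and evaluate the inner sum with Lemma~\ref{lem:harmonic}, using the following remark for $k=0$. Your sign bookkeeping, which yields $2H_{(p-1)/2}-H_k$, is also right.
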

\begin{proof}
Let $p=2n+1$. Define $b_k:=H_k$ for $k\in\{0,1,2,\ldots,n\}$.
It follows from \eqref{hnidentity} and \eqref{th1c1} that
\begin{align*}
-\sum_{k=0}^{n}\binom{2k}{k}^2\frac{H_ka_k}{16^{k}}&\equiv \left( \frac{-1}{p}\right) \sum_{k=0}^{n}\sum_{i=1}^{n}\frac{\binom{2k}{k}^2\binom{2i }{i}^2}{\binom{i+k}{k}16 ^{k+i}}\frac{a_{k}^*}{i}\pmod{p^2}
\nonumber
\\
&\equiv \left( \frac{-1}{p}\right)\sum_{k=0}^{n}\binom{2k}{k}^2 \frac{a_{k}^*}{16 ^{k}}\sum_{i=1}^{n}\frac{\binom{n}{i}\binom{n+i}{i}}{\binom{i+k}{k}}\frac{(-1)^i}{i}\pmod{p^2}.
\end{align*}
Finally, an application of Lemma \ref{lem:harmonic} yields the desired result.
\end{proof}

\begin{theorem}Let $p>3$ be a prime. Let $(a_k)_{k\geqslant 0}$ be any sequence of $p$-adic integers. Then 
\begin{equation}\label{th5.1.38}
\sum_{k=0}^{p-1}\frac{\binom{2k}{k}^2}{16 ^{k}}\left( \frac{a_k}{\left( k+1\right)} +4\left( \frac{-1}{p}\right)ka^*_k\right) \equiv 0 \pmod{p^2}.
\end{equation}
\end{theorem}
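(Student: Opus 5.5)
The plan is to apply the first congruence \eqref{th1c1} of Theorem~\ref{theorem1} with the specific choice $b_k:=1/(k+1)$, which is $p$-adically integral for $0\le k\le (p-1)/2$. By \cite[(1.43)]{gould}, or simply by integrating $(1-x)^n$ over $[0,1]$, we have
\[
\sum_{k=0}^{n}\binom{n}{k}\frac{(-1)^k}{k+1}=\frac{1}{n+1}.
\]
Hence $b_i^*=1/(i+1)$, so this sequence is self-dual.

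Write $p=2n+1$. We first reduce the left-hand side of \eqref{th5.1.38} to the range $0\le k\le n$. For $n<k<p-1$ we have $p^2\mid\binom{2k}{k}^2$ while $k+1$ is prime to $p$. So those terms vanish modulo $p^2$, and likewise all terms $4k\binom{2k}{k}^2a_k^*/16^k$ with $k>n$ vanish. The term $k=p-1$ of $a_k/(k+1)$ needs separate attention. Since $\binom{2p-2}{p-1}\equiv -p \pmod{p^2}$, this term equals $p\,a_{p-1}/16^{p-1}$ modulo $p^2$. I will first double-check this boundary term. It must either be absorbed by the closed form found below (for instance through a term in $a_{2n}$ after dualising), or else the effective range is really $k\le (p-1)/2$. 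I would verify this numerically for small $p$ before proceeding.

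With the range reduced, \eqref{th1c1} and Lemma~\ref{lemma2.3} (used in reverse on the $i$-sum) give
\[
\sum_{k=0}^{n}\binom{2k}{k}^2\frac{a_k}{(k+1)16^k}\equiv\left(\frac{-1}{p}\right)\sum_{k=0}^{n}\binom{2k}{k}^2\frac{a_k^*}{16^k}\,S_n(k)\pmod{p^2},
\]
where
\[
S_n(k):=\sum_{i=0}^{n}\frac{\binom{n}{i}\binom{n+i}{i}(-1)^i}{(i+1)\binom{i+k}{k}}.
\]
The key step is to evaluate $S_n(k)$ in closed form for $0\le k\le n$. I would do this exactly as in Lemma~\ref{lem:harmonic}: use \texttt{Sigma} (or Zeilberger) to obtain a recurrence in $n$, solve it, and fix the solution by initial values. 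As a sanity check, $S_n(0)=\int_0^1P_n(1-2x)\,dx=0$ for $n\ge1$. For $k\ge1$ I expect a rational closed form polynomial in $k$ and $n$, something like $(-1)^n$ times a simple expression in $k$ and $n(n+1)$. One can also get it directly by writing $1/\binom{i+k}{k}=k\int_0^1x^i(1-x)^{k-1}\,dx$ and $1/(i+1)=\int_0^1 y^i\,dy$, then using orthogonality of shifted Legendre polynomials. Because only a low-degree factor survives against $P_n$, the result should be simple.

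Finally, substitute $n=(p-1)/2$, so that $n(n+1)=(p^2-1)/4\equiv -1/4 \pmod{p^2}$ and $(-1)^n=\left(\frac{-1}{p}\right)$. The aim is to obtain $S_n(k)\equiv -4k \pmod{p^2}$, possibly up to the sign absorbed by $\left(\frac{-1}{p}\right)^2=1$. This yields \eqref{th5.1.38}. The main obstacle is the closed-form evaluation of $S_n(k)$, together with reconciling the $k=p-1$ boundary term. If the closed form contains extra terms such as harmonic numbers or factors like $1/(n+k+1)$, I would reduce them modulo $p^2$ using Lemma~\ref{lemma2.4}.
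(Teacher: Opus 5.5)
Your route is the paper's: take $b_k=1/(k+1)$, use self-duality $b_i^*=1/(i+1)$, apply \eqref{th1c1}, and evaluate $S_n(k)$. The paper gets $S_n(k)=k/(n(n+1))$ exactly from \texttt{Sigma}, with no factor $(-1)^n$. Your integral idea also gives this cleanly. It yields $S_n(k)=k\int_0^1P_n(1-2s)\,g_k(s)\,ds$ with $g_k(s)=\int_s^1(1-u)^{k-1}u^{-1}\,du=-\ln s+(\text{a polynomial of degree}\le k-1)$. The polynomial part vanishes by orthogonality because $k\le n$, and $\int_0^1P_n(1-2s)\ln s\,ds=-1/(n(n+1))$. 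Since $1/(n(n+1))=4/(p^2-1)\equiv-4\pmod{p^2}$, you get your target $S_n(k)\equiv-4k$, and the factor $\left(\frac{-1}{p}\right)$ survives as it should.

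The genuine gap is the boundary term you flagged, and it cannot be closed, because the statement as written is false. Your evaluation is correct: the $k=p-1$ summand is $\equiv p\,a_{p-1}\pmod{p^2}$, and all other terms with $k>(p-1)/2$ vanish modulo $p^2$. Nothing can absorb it. The remaining terms, and both sides of \eqref{th1c1} over the range $k\le n$, involve only $a_0,\dots,a_n$, while $a_{p-1}$ is arbitrary. So the argument actually proves
\[
\sum_{k=0}^{p-1}\frac{\binom{2k}{k}^2}{16^k}\left(\frac{a_k}{k+1}+4\left(\frac{-1}{p}\right)ka_k^*\right)\equiv p\,a_{p-1}\pmod{p^2},
\]
that is, \eqref{th5.1.38} holds only with upper limit $(p-1)/2$, as in the other theorems of that section. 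Concretely, take $p=5$ and $a_k=1$ for all $k$, so that $a_k^*=0$ for $k\ge1$. The left side is then $\sum_{k=0}^{4}\binom{2k}{k}^2/(16^k(k+1))=19845/16384$, with $19845=5\cdot3^4\cdot7^2$, so it is $\equiv 5\not\equiv0\pmod{25}$.

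The paper's proof has the same defect. It writes $\sum_{k=0}^{p-1}$ on the left while invoking \eqref{th1c1}, which only covers $k\le (p-1)/2$, and silently drops the $k=p-1$ term. So instead of trying to absorb the boundary term, you should state and prove the truncated congruence, or the version with $p\,a_{p-1}$ on the right. Your argument does this once $S_n(k)$ is evaluated as above.
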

\begin{proof}Let $m$ be a nonnegative integer. We have (see \cite[(1.37)]{gould})
\begin{align*}
\sum_{k=0}^{m}\binom{m}{k}\frac{(-1)^k}{k+1}=\frac{1}{m+1}.
\end{align*}
Let $p=2n+1$. Define $b_k:=\frac{1}{k+1}$, for $k \in \{0,1,2,\ldots,n\}$.  In view of the above binomial identity and by \eqref{th1c1}, we obtain
\begin{align}
\sum_{k=0}^{p-1}\binom{2k}{k}^2\frac{a_k}{\left( k+1\right) 16^{k}}&\equiv \left( \frac{-1}{p}\right) \sum_{k=0}^{n}\sum_{i=0}^{n}\frac{\binom{2k}{k}^2\binom{2i }{i}^2}{\binom{i+k}{k}16 ^{k+i}}\frac{a_{k}^*}{i+1}\pmod{p^2}
\nonumber
\\
&\equiv \left( \frac{-1}{p}\right)\sum_{k=0}^{n}\binom{2k}{k}^2 \frac{a_{k}^*}{16 ^{k}}\sum_{i=0}^{n}\frac{\binom{n}{i}\binom{n+i}{i}}{\binom{i+k}{k}}\frac{(-1)^i}{i+1}\pmod{p^2}.\label{th5.1.39}
\end{align}
By \texttt{Sigma}, for a positive integer $m$ we obtain the following
identity:
\begin{align}\label{lemma5.1}
\sum_{k=0}^{m}\frac{\binom{m}{k}\binom{m+k}{k}\left( -1\right) ^{k}}{\left( k+1\right) \binom{i+k}{k}}=\frac{i}{m\left( m+1\right) }.
\end{align}
Taking $m=n$ in \eqref{lemma5.1} and substituting the resulting identity
into \eqref{th5.1.39} gives \eqref{th5.1.38}, which completes the proof.
\end{proof}

\begin{theorem}Let $p>3$ be a prime. Let $(a_k)_{k\geqslant0}$ be any sequence of $p$-adic integers. Then
\begin{equation}
\sum_{k=0}^{(p-1)/2}\frac{\binom{2k}{k}^2}{\left( 2k-1\right) 16^{k}}\left( a_k+2k\left( \frac{-1}{p}\right)a_k^*\right) \equiv p\sum_{k=0}^{(p-1)/2}\binom{2k}{k}\frac{a_k^*}{\left( 2k-1\right) 4 ^{k}}\pmod{p^2}.
\end{equation}
\end{theorem}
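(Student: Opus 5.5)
The plan is to apply the first congruence \eqref{th1c1} of Theorem~\ref{theorem1} with the auxiliary sequence $b_k:=\frac{1}{2k-1}$. This is a $p$-adic integer sequence for $0\le k\le (p-1)/2$, since $2k-1\le p-2$.

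\emph{Step 1: compute the dual $b_i^*$.} I would use \cite[(1.43)]{gould}, $\sum_{j=0}^{m}\binom{m}{j}\frac{(-1)^j}{x-j}=\frac{(-1)^m}{(x-m)\binom{x}{m}}$, at $x=\frac12$. Writing $\frac{1}{2j-1}=-\frac12\cdot\frac{1}{\frac12-j}$ and using $\binom{1/2}{i}=\frac{(-1)^{i-1}\binom{2i}{i}}{4^i(2i-1)}$, this gives
\[
b_i^*=\sum_{j=0}^{i}\binom{i}{j}\frac{(-1)^j}{2j-1}=-\frac{4^i}{\binom{2i}{i}} .
\]
The case $i=0$ gives $-1=b_0$, as it should. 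Substituting into \eqref{th1c1} then yields
\[
\sum_{k=0}^{n}\binom{2k}{k}^2\frac{a_k}{(2k-1)16^k}\equiv -\left(\frac{-1}{p}\right)\sum_{k=0}^{n}\binom{2k}{k}^2\frac{a_k^*}{16^k}\,S_k\pmod{p^2},
\qquad
S_k:=\sum_{i=0}^{n}\frac{\binom{2i}{i}}{4^i\binom{i+k}{k}},
\]
where $p=2n+1$.

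\emph{Step 2: evaluate $S_k$ in closed form.} This finite sum is the crux and the main obstacle. For $k=0$ it is the classical $\sum_{i\le n}\binom{2i}{i}4^{-i}=\frac{(2n+1)\binom{2n}{n}}{4^n}$. In general I expect, via \texttt{Sigma} or Gosper's algorithm (the summand is hypergeometric in $i$ and the sum should telescope), an identity of the shape
\[
S_k=-\frac{2k}{2k-1}+\frac{c_k(n)\binom{2n+2}{n+1}}{4^{n+1}\binom{n+1+k}{k}}.
\]
Here $c_k(n)$ is a rational factor, essentially $\frac{2(n+1+k)}{2k-1}$. The first term is the constant that makes the $n=0$ and $k=0$ cases consistent; I would determine it by finding the Gosper certificate for $t_i=\binom{2i}{i}/(4^i\binom{i+k}{k})$ and then checking small $n$.

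\emph{Step 3: reduce modulo $p^2$.} I would set $n=(p-1)/2$ and simplify both pieces of $S_k$.
\begin{itemize}[leftmargin=*]
\item The constant piece contributes $-\frac{2k}{2k-1}$ to $S_k$. With the outer minus sign it produces $\left(\frac{-1}{p}\right)\sum_k\binom{2k}{k}^2\frac{2k\,a_k^*}{(2k-1)16^k}$, which I move to the left-hand side to give the term $2k\left(\frac{-1}{p}\right)a_k^*$ in the statement.
\item In the boundary piece, $\binom{p+1}{(p+1)/2}$ is divisible by $p$, so only the mod $p$ value of the remaining factor matters. There $\binom{(p+1)/2+k}{k}\equiv\binom{k+1/2}{k}=\frac{(2k+1)\binom{2k}{k}}{4^k}\pmod p$ for $k\le n$, and $\frac{1}{p}\binom{p+1}{(p+1)/2}/4^{(p+1)/2}\equiv$ (a constant times) $\left(\frac{-1}{p}\right)$ by Fermat's little theorem and Euler's criterion $2^{(p-1)/2}\equiv\left(\frac{2}{p}\right)$ combined with the Wilson-type evaluation of the binomial.
\end{itemize}
With these reductions, $\binom{2k}{k}^2 16^{-k}$ times the boundary piece collapses to $p\binom{2k}{k}\frac{1}{(2k-1)4^k}$, up to the factor $\left(\frac{-1}{p}\right)$, which cancels against the Legendre symbol already present.

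I expect the bookkeeping of this constant to be the delicate point: the sign and power of $2$ must come out exactly $1$ to match the stated right-hand side. If the Gosper form does not directly appear, I would fall back on a \texttt{Sigma} recurrence in $n$ for $S_k$, exactly as in the proof of Lemma~\ref{lem:harmonic}.
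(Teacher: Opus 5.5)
Your route is the paper's route: apply \eqref{th1c1} with $b_k=1/(2k-1)$, use $b_i^*=-4^i/\binom{2i}{i}$, find a closed form for $S_k$, and reduce the boundary term modulo $p$. Your derivation of $b_i^*$ from Gould (1.43) at $x=\frac12$ is correct; the paper quotes the equivalent (1.41). The gap is Step~2, which carries the whole proof: you guess the closed form rather than derive it, and the guess has the wrong overall sign. Since $t_{i+1}/t_i=(i+\frac12)/(i+k+1)$, Gosper's algorithm gives the following; this is \eqref{5.5} with $x=k$, $m=n$, which the paper obtains from Gould (4.1):
\[
S_k=\frac{2k}{2k-1}-\frac{(2n+1)\binom{2n}{n}}{(2k-1)\,4^n\binom{n+k}{k}} .
\]
Your expression with $c_k(n)=2(n+1+k)/(2k-1)$ is exactly the negative of this. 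At $k=0$ it gives $-(2n+1)\binom{2n}{n}/4^n$, contradicting the classical value you quote; at $k=1$, $n=0$ it gives $-1$ instead of $1$. Step~3 then makes a second, compensating slip. With your sign, the constant piece appears on the right as $+\left(\frac{-1}{p}\right)\sum_k\binom{2k}{k}^2\frac{2k\,a_k^*}{(2k-1)16^k}$, and moving it to the left yields $a_k-2k\left(\frac{-1}{p}\right)a_k^*$, the opposite of the statement. As written, you reach the target only because two sign errors cancel.

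With the correct $S_k$, the argument closes as in the paper. The constant piece gives $+2k\left(\frac{-1}{p}\right)a_k^*$ on the left. The boundary piece contributes $\left(\frac{-1}{p}\right)\frac{p\binom{p-1}{n}}{4^n}\sum_{k}\binom{2k}{k}^2\frac{a_k^*}{16^k(2k-1)\binom{n+k}{k}}$. This is $\equiv p\sum_k\binom{2k}{k}\frac{a_k^*}{(2k-1)4^k}\pmod{p^2}$, because $\binom{p-1}{n}\equiv\left(\frac{-1}{p}\right)$, $4^n\equiv1$, and $\binom{n+k}{k}\equiv\binom{2k}{k}/4^k\pmod p$, which is a unit for $k\le n$.

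Keep this unshifted form. In your shifted version the denominator $\binom{(p+1)/2+k}{k}$ equals $\binom{p}{n}\equiv0\pmod p$ at $k=n$. So replacing it by $(2k+1)\binom{2k}{k}/4^k$ and dividing is illegitimate there until you cancel the factor $n+1+k$ using $\binom{n+1+k}{k}=\frac{n+1+k}{n+1}\binom{n+k}{k}$, which just returns the paper's form. Finally, no Euler criterion for $2$ is needed. We have $\frac1p\binom{p+1}{(p+1)/2}\equiv4\left(\frac{-1}{p}\right)$ and $4^{(p+1)/2}\equiv4\pmod p$, so the constant is exactly $\left(\frac{-1}{p}\right)$.
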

\begin{proof}Let $k$ be a nonnegative integer. Recall that
\begin{align*}
\binom{-\frac{1}{2}}{k}=\binom{2k}{k}\frac{1}{(-4)^k}\quad\text{and}\quad\binom{-1-x}{k}=\binom{x+k}{k}(-1)^k.
\end{align*}
In view of the above identities and by \cite[(4.1)]{gould}, one obtains
\begin{align}\label{5.5}
\sum_{k=0}^m\frac{\binom{2k}{k}}{4^k\binom{x+k}{k}}=\frac{2x}{2x-1}-\frac{\left( 2m+1\right) \binom{2m}{m}}{\left( 2x-1\right) \binom{x+m}{m}4^m}.
\end{align}
By \cite[(1.41)]{gould},
\begin{align*}
\sum_{k=0}^m\binom{m}{k}\frac{\left( -1\right) ^{k}}{2k-1}=-\frac{4^m}{\binom{2m}{m}}.
\end{align*}
 Applying \eqref{th1c1}, we have
\begin{align*}
-\sum_{k=0}^{(p-1)/2}\binom{2k}{k}^2\frac{a_k}{16^{k}(2k-1)}&\equiv 
\left( \frac{-1}{p}\right)\sum_{k=0}^{(p-1)/2}\binom{2k}{k}^2 \frac{a_{k}^*}{16 ^{k}}\sum_{i=0}^{(p-1)/2}\frac{\binom{2i}{i}}{\binom{i+k}{k}4^i}\pmod{p^2}.
\end{align*}
By \eqref{5.5}, we obtain
\begin{align*}
-\sum_{k=0}^{(p-1)/2}\binom{2k}{k}^2\frac{a_k}{16^{k}(2k-1)}\equiv & 
2\left( \frac{-1}{p}\right)\sum_{k=0}^{(p-1)/2}\binom{2k}{k}^2 \frac{a_{k}^*k}{16 ^{k}(2k-1)}\\
&-p\frac{\binom{p-1}{\frac{p-1}{2}}}{4^{(p-1)/2}} \left( \frac{-1}{p}\right)\sum_{k=0}^{(p-1)/2}\binom{2k}{k}^2 \frac{a_{k}^*}{16 ^{k}(2k-1)\binom{n+k}{k}} \pmod{p^2}.
\end{align*}
A direct computation shows that $\binom{n+k}{k}\equiv \binom{2k}{k}/4^k\pmod p$ for $k\in \{0,1,2,\ldots,\frac{p-1}{2} \}$. This, together with Morley's congruence \cite{Morley1895} and Fermat's little theorem, yields the desired result.
\end{proof}

\begin{theorem}Let $p>3$ be a prime. Let $(a_k)_{k\geqslant0}$ be any sequence of $p$-adic integers. Then
\begin{equation}
\sum_{k=0}^{(p-1)/2}\frac{\binom{2k}{k}}{ 4^{k}}\left(  p \frac{a_k}{( 2k+1)}-\left( \frac{-1}{p}\right)a_k^*\right)  \equiv  0\pmod{p^2}.
\end{equation}
\end{theorem}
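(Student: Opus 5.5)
The plan is to evaluate the dual-sequence sum directly, exactly as in the proof of Theorem~\ref{theorem2}, and not through \eqref{th1c1}. Write $p=2n+1$. Substituting $a_k^*=\sum_{i}\binom{k}{i}(-1)^ia_i$ and swapping the order of summation gives
\begin{align*}
\sum_{k=0}^{n}\binom{2k}{k}\frac{a_k^*}{4^k}=\sum_{i=0}^{n}(-1)^ia_i\sum_{k=i}^{n}\binom{2k}{k}\binom{k}{i}\frac{1}{4^k}.
\end{align*}
By \eqref{lemma2.2+} this equals
\begin{align*}
\frac{(n+1)\binom{2n+2}{n+1}}{2^{p}}\sum_{i=0}^{n}\frac{(-1)^i\binom{n}{i}}{2i+1}\,a_i,
\end{align*}
which is an exact identity. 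The task is then to compare it term by term with $\left(\frac{-1}{p}\right)p\sum_{i\le n}\binom{2i}{i}a_i/\bigl(4^i(2i+1)\bigr)$.

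\emph{The prefactor.} Since $2n+2=p+1$, we have
\begin{align*}
(n+1)\binom{2n+2}{n+1}=(p+1)\binom{p}{n}=(p+1)\frac{p}{n}\binom{p-1}{n-1},
\end{align*}
so the prefactor is divisible by $p$.

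\emph{Terms with $0\le i<n$.} Here $p\nmid 2i+1$, so each such term carries the factor $p$ and we only need the remaining factors modulo $p$. We use $1/n\equiv-2$, $\binom{p-1}{n-1}\equiv(-1)^{n-1}$, and $2^{p-1}\equiv1 \pmod p$. Together these give $(n+1)\binom{2n+2}{n+1}/2^p\equiv p(-1)^n=p\left(\frac{-1}{p}\right)\pmod{p^2}$. We also use $(-1)^i\binom{n}{i}\equiv\binom{2i}{i}/4^i\pmod p$, which is Lemma~\ref{lemma2.3} read modulo $p$ after writing $\binom{n}{i}\binom{n+i}{i}=\binom{n+i}{2i}\binom{2i}{i}$, or the standard congruence recalled in the proof of Corollary~\ref{coro4.11}. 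With these, the $i$-th term is congruent modulo $p^2$ to $\left(\frac{-1}{p}\right)p\binom{2i}{i}a_i/\bigl(4^i(2i+1)\bigr)$, which is what we want.

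\emph{The term $i=n$.} This is the main obstacle and must be handled exactly. Here $2i+1=p$ cancels the factor $p$, so a mod-$p$ approximation of the cofactor no longer suffices. I would compute this term exactly. It equals $(-1)^na_n(p+1)\binom{p-1}{n-1}/(n\,2^p)$. Using $\binom{p-1}{n-1}=\binom{p-1}{n}\frac{n}{n+1}$ and $(p+1)/(n+1)=2$, this becomes $2\binom{p-1}{n}(-1)^na_n/2^p=\binom{2n}{n}(-1)^na_n/4^n$, since $\binom{p-1}{n}=\binom{2n}{n}$ and $2^{p-1}=4^n$. On the target side, the $i=n$ term is $\left(\frac{-1}{p}\right)p\binom{2n}{n}a_n/(4^n p)$, and $(-1)^n=\left(\frac{-1}{p}\right)$, so the two agree exactly.

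Summing over $i$ yields
\begin{align*}
\sum_{k=0}^{n}\binom{2k}{k}\frac{a_k^*}{4^k}\equiv\left(\frac{-1}{p}\right)p\sum_{k=0}^{n}\binom{2k}{k}\frac{a_k}{4^k(2k+1)} \pmod{p^2}.
\end{align*}
Multiplying by $\left(\frac{-1}{p}\right)$ and rearranging gives the stated congruence.
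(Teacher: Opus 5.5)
Your proof is correct, but it takes a different route from the paper's.

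The paper obtains the theorem as a specialization of the bilinear congruence \eqref{th1c1}. It works in three steps:
\begin{enumerate}[label=(\roman*)]
\item The auxiliary sequence is in effect chosen so that its dual is $p/(2k+1)$; the pair is related by \eqref{4.3}.
\item The weights $\binom{2i}{i}^2/16^i$ are replaced by $(-1)^i\binom{n}{i}\binom{n+i}{i}$ via Lemma~\ref{lemma2.3}.
\item The resulting inner sum is collapsed by an additional \texttt{Sigma} identity, $\sum_{i=0}^{m}\binom{m}{i}\binom{m+i}{i}\frac{(-1)^i}{(2i+1)\binom{k+i}{k}}=\frac{4^k}{\binom{2k}{k}(2m+1)}$, taken at $m=n$. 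There the factor $1/(2n+1)=1/p$ cancels the $p$ carried by the auxiliary sequence.
\end{enumerate}

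You bypass \eqref{th1c1} altogether. Expanding $a_k^*$ and applying \eqref{lemma2.2+} gives an exact single sum with prefactor $(n+1)\binom{2n+2}{n+1}/2^p=p\,(p+1)\binom{p-1}{n-1}/(n2^p)$. This is divisible by $p$, so only mod-$p$ information is needed for the terms $i<n$. The one delicate term, $i=n$, agrees with the target exactly. Your simplification there, via $\binom{p-1}{n-1}=\frac{n}{n+1}\binom{p-1}{n}$ and $(p+1)/(n+1)=2$, is right.

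The paper's route has the merit of exhibiting the result as one more instance of \eqref{th1c1}, which is the organizing theme of that section. Yours is shorter and self-contained, and needs no new computer-generated identity. It also makes clear that the theorem is the half-range counterpart of Theorem~\ref{theorem2}. With upper limit $n$, the critical term $i=n$ (where $2i+1=p$) comes out exactly. With upper limit $p-1$, that term carries the factor $(-1)^n\binom{p-1}{n}\equiv 1-pH_{(p-1)/2}$, which is the source of the harmonic correction in \eqref{theorem2cong}.

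One small remark concerns your two justifications of $(-1)^i\binom{n}{i}\equiv\binom{2i}{i}/4^i\pmod p$. Rely on the second one, equivalently $\binom{n}{i}\equiv\binom{-1/2}{i}\pmod p$. Lemma~\ref{lemma2.3} on its own only controls the product $\binom{n}{i}\binom{n+i}{i}$, so it does not isolate $\binom{n}{i}$ without further work.
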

\begin{proof}Let $p=2n+1$ and $k\in \{0,1,2,\ldots, n\}$. Define $b_k:=\frac{p}{2k+1}$. In view of \eqref{4.3} and \eqref{th1c1}, we have
\begin{align}
p\sum_{k=0}^{p-1}\binom{2k}{k}\frac{a_k}{4^{k}(2k+1)}&\equiv \left( \frac{-1}{p}\right) \sum_{k=0}^{n}\sum_{i=0}^{n}\frac{\binom{2k}{k}^2\binom{2i }{i}^2}{\binom{i+k}{k}(2i+1)16 ^{k+i}}a_{k}^*  \pmod{p^2}
\nonumber
\\
&= p\left( \frac{-1}{p}\right)\sum_{k=0}^{n}\binom{2k}{k}^2 \frac{a_{k}^*}{16 ^{k}}\sum_{i=0}^{n}\frac{\binom{n}{i}\binom{n+i}{i}(-1)^i}{\binom{i+k}{k}(2i+1)} \pmod{p^2}.\label{5.6}
\end{align}
By \texttt{Sigma}, we find the following identity holds, for $m \ge i $:
\begin{align*}
\sum_{k=0}^{m} \binom{m}{k} \binom{m+k}{k} \frac{(-1)^k}{(2k+1)\binom{i+k}{k}}
=\frac{4^i}{\binom{2i}{i}(2m+1)}.
\end{align*}
Substituting the above identity, for $m=n$, into \eqref{5.6} gives the desired result.
\end{proof}
\begin{theorem}Let $p>3$ be a prime. Let $(a_k)_{k\geqslant0}$ be any sequence of $p$-adic integers. Then
\begin{equation*}
\sum_{k=0}^{(p-1)/2}\frac{\binom{2k}{k}^2}{ 16^{k}}\left(  \frac{H_{k+1}}{k+1}a_k+4\left( \frac{-1}{p}\right)\left(4k+1+2kH_{(p-1)/2}-kH_k \right) a_k^*\right)  \equiv  0\pmod{p^2}.
\end{equation*}
\end{theorem}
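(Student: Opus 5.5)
The plan follows the pattern of the preceding theorems: specialize the sequence $(b_k)_{k\ge0}$ in \eqref{th1c1}, compute its dual in closed form, and evaluate the resulting inner sum with \texttt{Sigma}. Let $p=2n+1$ and define $b_k:=H_{k+1}/(k+1)$ for $k\in\{0,1,\ldots,n\}$. These are $p$-adic integers, since $k+1\le n+1<p$.

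\textbf{Step 1: the dual of $b_k$.} The first step is the identity
\begin{equation*}
b_i^*=\sum_{j=0}^{i}\binom{i}{j}(-1)^j\frac{H_{j+1}}{j+1}=\frac{1}{(i+1)^2}.
\end{equation*}
It checks at $i=0$ (value $1$) and at $i=1$ (value $1-\tfrac34=\tfrac14$). To prove it, write $H_{j+1}/(j+1)=\int_0^1 x^{j}\,(-\log x)\,dx$ and sum the binomial series, giving $\int_0^1(1-x)^i(-\log x)\,dx=1/(i+1)^2$. Alternatively, let \texttt{Sigma} produce and verify a recurrence in $i$.

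\textbf{Step 2: reduce to one inner sum.} Substitute into \eqref{th1c1}, interchange the sums, and use Lemma \ref{lemma2.3} to replace $\binom{2i}{i}^2/16^i$ by $\binom{n}{i}\binom{n+i}{i}(-1)^i$ modulo $p^2$. This gives
\begin{equation*}
\sum_{k=0}^{n}\binom{2k}{k}^2\frac{H_{k+1}a_k}{(k+1)16^k}\equiv\left(\frac{-1}{p}\right)\sum_{k=0}^{n}\binom{2k}{k}^2\frac{a_k^*}{16^k}\,S(n,k)\pmod{p^2},
\end{equation*}
where
\begin{equation*}
S(m,k):=\sum_{i=0}^{m}\frac{\binom{m}{i}\binom{m+i}{i}(-1)^i}{(i+1)^2\binom{i+k}{k}}.
\end{equation*}
It then suffices to show $S(n,k)\equiv-4\bigl(4k+1+2kH_n-kH_k\bigr)\pmod{p^2}$ for $0\le k\le n$.

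\textbf{Step 3: closed form for $S(m,k)$ (main obstacle).} I would proceed exactly as in Lemma \ref{lem:harmonic}. Feed $S(m,k)$ to \texttt{Sigma} to get a recurrence in $m$, solve it in d'Alembertian terms, and fix the combination from initial values. Then use $m\ge k$ to truncate the inner nested sums at $k$, and evaluate those truncated sums in closed form with a second \texttt{Sigma} call, as was done for \eqref{5.3}.

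The closed form should be a combination of $1/(m(m+1))$, $H_m/(m(m+1))$, $H_k$ and rational functions of $k$. Small cases show it is not simply $(4k+1+2kH_m-kH_k)/(m(m+1))$. At $k=0$ that guess agrees ($S(1,0)=\tfrac12$), but at $k=1,m=1$ one gets $S(1,1)=\tfrac34$ rather than $3$. So extra terms must appear, and I expect them to vanish or combine only after specializing $m=n$.

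For the reduction, use $n(n+1)=(p^2-1)/4\equiv-\tfrac14\pmod{p^2}$, and note that $H_n$ and $H_k$ are $p$-integral. Any remaining term with a factor like $p\binom{2k}{k}/4^k$ should be handled with $\binom{n+k}{k}\equiv\binom{2k}{k}/4^k\pmod p$ and Morley's congruence, as in the $(2k-1)$ theorem above.

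This identity hunt and its mod-$p^2$ simplification is the step I expect to be hardest. Once $S(n,k)$ is shown to be congruent to $-4(4k+1+2kH_n-kH_k)$, moving the right-hand side to the left gives the stated congruence.
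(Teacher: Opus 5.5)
Your route is the paper's: take $b_k=H_{k+1}/(k+1)$ with $b_i^*=1/(i+1)^2$, apply \eqref{th1c1} and Lemma~\ref{lemma2.3}, evaluate the inner sum with \texttt{Sigma}, and use $n(n+1)\equiv-1/4\pmod{p^2}$. Your reduced target $S(n,k)\equiv-4(4k+1+2kH_n-kH_k)\pmod{p^2}$ is exactly right. Two points need fixing or filling in.

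\textbf{The integral argument in Step~1 is reversed.} In fact $\int_0^1x^j(-\log x)\,dx=1/(j+1)^2$, while $\int_0^1(1-x)^i(-\log x)\,dx=H_{i+1}/(i+1)$. Both of your displayed equalities already fail at $j=i=1$ ($1/4$ versus $3/4$). Corrected, your computation proves \eqref{5.10}, namely that the dual of $1/(k+1)^2$ is $H_{k+1}/(k+1)$. That is how the paper proceeds, and $(a^*)^*=a$ then gives $b_i^*=1/(i+1)^2$. Alternatively, use $H_{j+1}/(j+1)=\int_0^1(1-x)^j(-\log x)\,dx$; then $\sum_j\binom ij(-1)^j(1-x)^j=x^i$ yields $1/(i+1)^2$ directly.

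\textbf{The closed form you are hunting is the paper's \eqref{5.11}.} In your notation, for $0\le k\le m$,
\[
S(m,k)=\frac{m(m+1)-k}{m^2(m+1)^2}+\frac{k\,(2H_m-H_k)}{m(m+1)}.
\]
The restriction $k\le m$ is needed: e.g.\ $S(1,2)=5/6$, while the right side gives $1/2$. Your guess $(4k+1+2kH_m-kH_k)/(m(m+1))$ exceeds the true value by exactly $k(2m+1)^2/(m(m+1))^2$. That difference vanishes modulo $p^2$ at $m=n$ because $2n+1=p$, which is why the guess only works after specialization. Substituting $1/(n(n+1))\equiv-4\pmod{p^2}$ into the closed form gives $S(n,k)\equiv-4-16k-4k(2H_n-H_k)\pmod{p^2}$ at once. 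So no Morley-type correction term arises, and the proof is complete once this identity is supplied.
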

\begin{proof}
By \texttt{Sigma}, the following identities hold:
\begin{align}\label{5.10}
\sum_{k=0}^{m}\binom{m}{k}\frac{(-1)^k}{(1+k)^2}=\frac{H_{m+1}}{m+1}
\end{align}
and
\begin{align}\label{5.11}
\sum_{k=0}^{m} \binom{m}{k} \binom{m+k}{k} \frac{(-1)^k}{(k+1)^2\binom{i+k}{k}}
&=\frac{-i+m+m^2}{m^2(m+1)^2}+\frac{2iH_m}{m(m+1)}-\frac{iH_i}{m(m+1)}.
\end{align}
Define $b_k:=\frac{H_{k+1}}{k+1}$. Applying \eqref{5.10} and \eqref{th1c1} with the help of Lemma \ref{lemma2.3}, we have
\begin{align*}
\sum_{k=0}^{(p-1)/2}\frac{\binom{2k}{k}^2}{ 16^{k}}\frac{H_{k+1}}{k+1}a_k\equiv \sum_{k=0}^{(p-1)/2}\frac{\binom{2k}{k}^2}{ 16^{k}}a_k^*\sum_{i=0}^{(p-1)/2} \binom{\frac{p-1}{2}}{i} \binom{\frac{p-1}{2}+i}{i} \frac{(-1)^i}{(i+1)^2\binom{i+k}{k}}\pmod{p^2}.
\end{align*}
Substituting \eqref{5.11}, for $m=(p-1)/2$, into the above congruence yields the desired result.
\end{proof}
\begin{remark}
We recall that the congruence \eqref{th1c1} in fact holds for an arbitrary $p$-adic sequence $(b_k)_{k\ge0}$ as well; in this section we have only specialized $(b_k)_{k\ge0}$ to obtain the theorems above. Other choices of $(b_k)_{k\ge0}$ would lead to further families of congruences; we leave it to the interested reader to explore this and other specializations.
\end{remark}
\begin{remark}
The theorems established in this section hold for an arbitrary sequence $(a_k)_{k\ge0}$ of $p$-adic integers, and we have not specialized $(a_k)_{k\ge0}$ to obtain any particular congruence. We invite the interested reader to explore such special cases; for instance, one could take $a_k=T_k(b,c)/b^k$.
\end{remark}
\section{Some congruences involving $T_{2k}(b,c)$}
\label{section5}
We first apply Theorem~\ref{theorem3} to the different formulas for the generalized central trinomial coefficients.
\begin{corollary}Let $p>3$ be a prime and $a,b,c$ be integers. If $p\nmid a$, then
\begin{equation}\label{5.1+}
\sum_{k=0}^{(p-1)/2}\binom{2k}{k}\dfrac{T_{2k}(b+a,c)}{a^{2k}4^k}\equiv p \left( \frac{-1}{p}\right)  \sum_{k=0}^{p-1}\frac{T_k(b,c)(-2)^k}{a^k(2k+1)\binom{2k}{k}}\pmod{p^2}.
\end{equation}
\end{corollary}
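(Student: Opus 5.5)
We apply Theorem \ref{theorem3} to a sequence whose dual is, up to a power of $a$, the generalized trinomial coefficient $T_n(b+a,c)$. The key input is Lemma \ref{lem:identity}, which gives, for every $n\ge 0$,
\begin{equation*}
\frac{T_n(b+a,c)}{a^n}=\sum_{k=0}^{n}\binom{n}{k}\frac{T_k(b,c)}{a^k}.
\end{equation*}

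First we choose the sequence. Since $p\nmid a$, we set
\[
a_k:=\frac{(-1)^kT_k(b,c)}{a^k}\qquad (k\ge 0).
\]
This is a sequence of $p$-adic integers, because $T_k(b,c)\in\mathbb{Z}$ and $a$ is a $p$-adic unit.

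Next we compute its dual. By \eqref{eq:dual},
\[
a_n^*=\sum_{k=0}^n\binom{n}{k}(-1)^k a_k=\sum_{k=0}^n\binom{n}{k}\frac{T_k(b,c)}{a^k},
\]
which equals $T_n(b+a,c)/a^n$ by the identity above. In particular $a_{2k}^*=T_{2k}(b+a,c)/a^{2k}$, so the left-hand side of \eqref{th3cng} is exactly the left-hand side of \eqref{5.1+}.

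Finally we compare the right-hand sides. On the right of \eqref{th3cng} the summand becomes
\[
\frac{2^k}{(2k+1)\binom{2k}{k}}\cdot\frac{(-1)^kT_k(b,c)}{a^k}=\frac{T_k(b,c)(-2)^k}{a^k(2k+1)\binom{2k}{k}},
\]
which is precisely the summand in \eqref{5.1+}. Hence \eqref{5.1+} follows immediately from \eqref{th3cng}.

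There is no genuine obstacle. The only point needing care is the sign $(-1)^k$ built into $a_k$: it is inserted so that the alternating sum defining the dual becomes the plain binomial transform in Lemma \ref{lem:identity}, and it then reappears on the right-hand side as $(-2)^k$. The right-hand side of \eqref{th3cng} is used exactly as stated in Theorem \ref{theorem3}, including its factor $p$ and its denominators.
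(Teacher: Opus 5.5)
Your proposal is correct and follows the paper's own route: choosing $a_k=(-1)^kT_k(b,c)/a^k$ makes Lemma~\ref{lem:identity} give $a_n^*=T_n(b+a,c)/a^n$, and \eqref{5.1+} then follows directly from \eqref{th3cng}. You have simply written out the substitution that the paper's one-line proof leaves implicit.
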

\begin{proof}
\eqref{5.1+} follows by applying Lemma \ref{lem:identity} and \eqref{th3cng}.
\end{proof}
\begin{corollary}Let $p>3$ be a prime and $b,c$ be integers. If $p\nmid b$, then
\begin{equation}\label{5.2+}
\sum_{k=0}^{(p-1)/2}\binom{2k}{k}\dfrac{T_{2k}(b,c)}{(4b^2)^k}\equiv p \left( \frac{-1}{p}\right)  \sum_{k=0}^{(p-1)/2}\binom{2k}{k}\frac{(4c)^{k}}{b^{2k}(4k+1)\binom{4k}{2k}}\pmod{p^2}.
\end{equation}
\end{corollary}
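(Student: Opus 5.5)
Apply Theorem~\ref{theorem3} (congruence \eqref{th3cng}) to a sequence chosen so that its dual is $T_{2k}(b,c)/b^{2k}$ at even indices. By \eqref{eq:Tnbc},
\begin{align*}
\frac{T_n(b,c)}{b^n}=\sum_{j=0}^{\lfloor n/2\rfloor}\binom{n}{2j}\binom{2j}{j}\frac{c^j}{b^{2j}}=\sum_{i=0}^{n}\binom{n}{i}(-1)^i a_i ,
\end{align*}
where $a_i=0$ for odd $i$ and $a_{2j}=\binom{2j}{j}c^j/b^{2j}$. This is the same device used in the proof of Corollary~\ref{corollary4.4}, since the signs $(-1)^i$ disappear on even $i$. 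Thus $a_n^*=T_n(b,c)/b^n$, and the left-hand side of \eqref{th3cng} becomes
\begin{align*}
\sum_{k=0}^{(p-1)/2}\binom{2k}{k}\frac{T_{2k}(b,c)}{b^{2k}4^k},
\end{align*}
which is exactly the left side of \eqref{5.2+}. All $a_i$ are $p$-adic integers because $p\nmid b$.

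On the right-hand side of \eqref{th3cng}, only even indices $k=2j$ with $0\le 2j\le p-1$ survive, that is, $j=0,\ldots,(p-1)/2$. For these the term is
\begin{align*}
\frac{2^{2j}}{(4j+1)\binom{4j}{2j}}\binom{2j}{j}\frac{c^j}{b^{2j}}=\binom{2j}{j}\frac{(4c)^j}{b^{2j}(4j+1)\binom{4j}{2j}}.
\end{align*}
Multiplying by $p\left(\frac{-1}{p}\right)$ reproduces the right side of \eqref{5.2+}, with no further congruence manipulation needed.

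The one point needing care is $p$-integrality on the right. Theorem~\ref{theorem3} is stated for arbitrary $p$-adic $a_k$, and its right side already contains the factors $p/((2k+1)\binom{2k}{k})$. These may be non-integral individually, for instance when $\binom{4j}{2j}$ is divisible by $p$ for $p/4<j\le (p-1)/2$, or when $4j+1=p$.

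This is not a real obstacle. The theorem asserts the congruence with exactly this expression, which is $p$-integral as a whole because it equals a $p$-integral left side modulo $p^2$. Our substitution merely specializes the $a_k$, so the identity of the two right-hand sides is term-by-term exact and nothing more is required. So the proof is a direct specialization, and I expect no step to be hard beyond checking the bookkeeping of the parity restriction and the power $2^{2j}=4^j$.
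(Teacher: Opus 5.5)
Your proposal is correct and is exactly the paper's argument: specialize \eqref{th3cng} to $a_{2j}=\binom{2j}{j}c^j/b^{2j}$ and $a_{2j+1}=0$, so that $a_n^*=T_n(b,c)/b^n$ by \eqref{eq:Tnbc}, and then read off the even-index terms on the right. Your integrality caveat is unnecessary and slightly misstated: for $0\le k\le p-1$ the prime $p$ divides at most one of $2k+1$ and $\binom{2k}{k}$, and only to the first power, so each factor $p/((2k+1)\binom{2k}{k})$ is already $p$-integral.
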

\begin{proof}
Applying \eqref{eq:Tnbc} together with \eqref{th3cng} yields \eqref{5.2+}.
\end{proof}
\begin{corollary}Let $p>3$ be a prime and $b,c$ be integers. If $p\nmid (b+2c)$, then
\begin{equation*}
 p  \sum_{k=0}^{p-1}\frac{T_k(b,c^2)2^k}{(b+2c)^k(2k+1)\binom{2k}{k}} \equiv \left( \frac{-1}{p}\right) \sum_{k=0}^{(p-1)/2}\binom{2k}{k}\binom{4k}{2k}\dfrac{c^{2k}}{(b+2c)^{2k}4^k}\pmod{p^2}.
\end{equation*}
\end{corollary}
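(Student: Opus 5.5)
We apply Theorem \ref{theorem3} with the sequence $a_k:=T_k(b,c^2)/(b+2c)^k$ for $k\in\{0,1,\ldots,p-1\}$. This is a sequence of $p$-adic integers because $p\nmid(b+2c)$. With this choice, the right-hand side of \eqref{th3cng} becomes
\[p\left(\frac{-1}{p}\right)\sum_{k=0}^{p-1}\frac{T_k(b,c^2)2^k}{(b+2c)^k(2k+1)\binom{2k}{k}}.\]
Up to the factor $\left(\frac{-1}{p}\right)$, this is the left-hand side of the claimed congruence. Multiplying both sides by $\left(\frac{-1}{p}\right)$, which squares to $1$, moves the Legendre symbol to the other side. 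It therefore remains to identify $a_{2k}^*$.

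To compute the dual sequence we use Lemma \ref{wanglemma}. Dividing by $(b+2c)^n$ gives
\[\frac{T_n(b,c^2)}{(b+2c)^n}=\sum_{k=0}^{n}\binom{n}{k}\binom{2k}{k}\left(\frac{-c}{b+2c}\right)^k.\]
This shows $a_n=\sum_{k}\binom{n}{k}(-1)^k u_k$ with $u_k:=\binom{2k}{k}c^k/(b+2c)^k$, so $a_n=u_n^*$. Since duality is an involution, $(a_n^*)^*=a_n$, we get $a_n^*=u_n=\binom{2n}{n}c^n/(b+2c)^n$. In particular
\[a_{2k}^*=\binom{4k}{2k}\frac{c^{2k}}{(b+2c)^{2k}}.\]
Substituting this into the left-hand side of \eqref{th3cng} gives
\[\sum_{k=0}^{(p-1)/2}\binom{2k}{k}\binom{4k}{2k}\frac{c^{2k}}{(b+2c)^{2k}4^k},\]
which is exactly the right-hand side of the corollary.

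The only point needing care is the involution step, which identifies $a^*$ from an expression of $a$ as a binomial transform with the signs $(-1)^k$. This matches the definition \eqref{eq:dual} exactly, so there is no real obstacle. The index range $k\le p-1$ in Theorem \ref{theorem3} causes no issue, because $a_k$ is defined for all $k$ (indeed for all $k\ge0$), and the identity for $a_n^*$ is exact.
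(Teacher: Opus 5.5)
Your proposal is correct and matches the paper's proof, which is the one-line ``Combining Lemma~\ref{wanglemma} and \eqref{th3cng} gives the desired result'': take $a_k=T_k(b,c^2)/(b+2c)^k$, read off $a_n^*=\binom{2n}{n}c^n/(b+2c)^n$ from Lemma~\ref{wanglemma} via the involution $(a^*)^*=a$, and substitute into \eqref{th3cng}. You have simply written out the details the paper leaves implicit, namely the involution step and moving $\left(\frac{-1}{p}\right)$ to the other side.
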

\begin{proof}
Combining Lemma \ref{wanglemma} and \eqref{th3cng} gives the desired result.
\end{proof}
\begin{remark}Another application of Lemma \ref{wanglemma} together with \eqref{th3cng} gives
\begin{equation*}
\sum_{k=0}^{(p-1)/2}\binom{2k}{k}\dfrac{ T_{2k}(b,c^2)}{(b+2c)^{2k}4^k}\equiv p \left( \frac{-1}{p}\right)  \sum_{k=0}^{p-1}\frac{c^k2^k}{(b+2c)^k(2k+1)}\pmod{p^2},
\end{equation*}
where $p\nmid (b+2c)$. This result already appears in \cite[Theorem 1.1]{wang2026}.
\end{remark}
We now turn to a general theorem confirming conjectures of Sun, whose proof relies on a combinatorial identity obtained through Riordan array transformation.

Recall that a pair of formal power series $(g(t),h(t))$ with $g(0)\ne0$, $h(0)=0$, and $h'(0)\ne0$ is called a \emph{Riordan array} (cf.~\cite{spr1994}), and is identified with the infinite lower triangular matrix whose $(n,k)$-entry is
\[
d_{n,k} = [t^n]\, g(t)h(t)^k,
\]
where $[t^n]\,f(t)$ denotes the coefficient of $t^n$ in the formal power series $f(t)$.

A fundamental result on Riordan arrays, often called the \emph{fundamental theorem of Riordan arrays} (cf.~\cite[Theorem 1.1]{spr1994}), states that if
\(
f(t)=\sum_{k\ge 0} f_k t^k
\), then
\begin{equation*}
\sum_{k=0}^n d_{n,k} f_k = [t^n]\, g(t)\, f\!\big(h(t)\big).
\end{equation*}
Moreover, Sprugnoli \cite[(2.1))]{spr1994} investigated the above transformation, where he proved that: 
\begin{equation}\label{RAT}
\sum_k \binom{n+ak}{m+dk}f_k=[t^n]\frac{t^m}{(1-t)^{m+1}}f\left(\frac{t^{d-a}}{(1-t)^d}\right) \text{ if }d>a.
\end{equation}
We now use \eqref{RAT} to establish a new combinatorial identity involving generalized central trinomial coefficients.

\begin{lemma}\label{lemma5.4}
Let $n$ be a positive integer and $x$ be an indeterminate. Then
\begin{align*}
 \sum_{k=0}^{n-1}
        \binom{n+k}{2k+1}\frac{T_k(b,c)}{x^k}
    = \sum_{k=0}^{n-1}
        P_k\left(1 + \frac{b + 2\sqrt{c}}{2x} \right) 
        P_{n-1-k}\left(1 + \frac{b -2\sqrt{c}}{2x} \right) .
\end{align*}
\end{lemma}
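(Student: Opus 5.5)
The plan is to prove the identity with the Riordan array transformation \eqref{RAT}, taking $a=1$, $d=2$, $m=1$. Since $d>a$, \eqref{RAT} gives, for any sequence $(f_k)$ with generating function $f(z)=\sum_{k\ge0}f_kz^k$,
\[
\sum_{k}\binom{n+k}{2k+1}f_k=[t^n]\,\frac{t}{(1-t)^2}\,f\!\left(\frac{t}{(1-t)^2}\right).
\]
We choose $f_k=T_k(b,c)/x^k$. The terms with $k\ge n$ vanish because $\binom{n+k}{2k+1}=0$ there, so the left side is exactly the left side of Lemma~\ref{lemma5.4}.

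Next we identify $f$. The classical generating function of the generalized central trinomial coefficients is
\[
\sum_{k\ge0}T_k(b,c)z^k=\frac{1}{\sqrt{1-2bz+(b^2-4c)z^2}}.
\]
This follows from \eqref{Tnbc}, or from $T_n(b,c)=(\sqrt d)^nP_n(b/\sqrt d)$ combined with the Legendre generating function $\sum_k P_k(y)t^k=(1-2yt+t^2)^{-1/2}$. Put $\beta=b+2\sqrt c$ and $\gamma=b-2\sqrt c$, so that $\beta+\gamma=2b$ and $\beta\gamma=b^2-4c$. Then
\[
f(z)=\bigl((1-\beta z/x)(1-\gamma z/x)\bigr)^{-1/2}.
\]
We work formally in $\mathbb{Q}(\sqrt c,x)[[t]]$, taking the branch of the square root with constant term $1$.

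Now substitute $z=t/(1-t)^2$. We have
\[
1-\frac{\beta t}{x(1-t)^2}=\frac{1-\bigl(2+\frac{\beta}{x}\bigr)t+t^2}{(1-t)^2},
\]
and similarly with $\gamma$ in place of $\beta$. The two factors $(1-t)^{-2}$ under the square root produce exactly $(1-t)^2$, which cancels the prefactor $1/(1-t)^2$. Hence
\[
[t^n]\,\frac{t}{(1-t)^2}f\!\left(\frac{t}{(1-t)^2}\right)=[t^{n-1}]\,\frac{1}{\sqrt{1-2y_1t+t^2}}\cdot\frac{1}{\sqrt{1-2y_2t+t^2}},
\]
where $y_1=1+\frac{\beta}{2x}$ and $y_2=1+\frac{\gamma}{2x}$. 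By the Legendre generating function, each factor equals $\sum_k P_k(y_i)t^k$. Extracting the coefficient of $t^{n-1}$ from the product gives the Cauchy convolution $\sum_{k=0}^{n-1}P_k(y_1)P_{n-1-k}(y_2)$, which is the right side of Lemma~\ref{lemma5.4}.

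The main obstacle is justifying the formal manipulations rather than the computation itself. The points to check are:
\begin{itemize}
\item the generating function of $T_k(b,c)$ in the stated form, including its splitting over $\sqrt c$;
\item the validity of the square-root branch choices after composing with $h(t)=t/(1-t)^2$, which is fine because $h(0)=0$;
\item the fact that the result is symmetric in $\pm\sqrt c$, so both sides are polynomials in $b$, $c$, $1/x$.
\end{itemize}
If one prefers to avoid \eqref{RAT}, the same argument can be run directly by expanding $\binom{n+k}{2k+1}=[t^{n}]\,t^{2k+1}(1-t)^{-2k-2}$ and summing over $k$.
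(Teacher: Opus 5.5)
Your argument is correct and is essentially the paper's proof: apply \eqref{RAT} with $f_k=T_k(b,c)/x^k$, factor the radicand as $(1-2y_1t+t^2)(1-2y_2t+t^2)$, recognize two Legendre generating functions, and take the Cauchy product. Your parameters $a=1$, $d=2$, $m=1$ and the coefficient $2y_i$ are the correct ones; the paper's ``$a=2$, $d=1$'' and its missing factor $2$ in the quadratic factors are slips. One minor fix to your closing aside: the direct expansion should read $\binom{n+k}{2k+1}=[t^n]\,t^{k+1}(1-t)^{-2k-2}$, since $t^{2k+1}(1-t)^{-2k-2}$ gives $\binom{n}{2k+1}$ instead.
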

\begin{proof}Recall that the generating function of generalized central trinomial coefficients is
\begin{align*}
   \sum_{k=0}^{\infty} T_k(b,c)t^k= \frac{1}{\sqrt{1 - 2bt + (b^2-4c)t^2}}.
\end{align*}
In addition, we have
\begin{align*}
   \sum_{k=0}^{\infty} P_k(x)t^k= \frac{1}{\sqrt{1 - 2t +t^2}}.
\end{align*}
Taking $a=2$, $d=1$, $m=1$ and $f_k=\frac{T_k(b,c)}{x^k}$ in \eqref{RAT}, we obtain
\begin{align*}
\sum_{k=0}^{n-1}
        \binom{n+k}{2k+1}\frac{T_k(b,c)}{x^k}
  & =[t^n]  \frac{t}{(1-t)^2} \frac{x}{\sqrt{x^2-\dfrac{2bxt}{(1-t)^2}+ \dfrac{(b^2-4c)t^2}{(1-t)^4}}}\\
 & =[t^n]\frac{xt}{\sqrt{x^2(1-t)^4- 2bxt(1-t)^2+ (b^2-4c)t^2}}\\
 &=[t^n]\frac{xt}{\sqrt{\bigl( x(1-t)^2 - bt\bigr)^2 - 4ct^2}}\\
 &=[t^n] \frac{xt}{\sqrt{(1-(1 + \frac{b+2\sqrt{c}}{2x}) t+t^2)(1-(1 + \frac{b - 2\sqrt{c}}{2x}) t+t^2)}}.
\end{align*}
Finally, extracting $[t^n]$ via the Cauchy product gives the desired result.
\end{proof}
\begin{remark}
We note that the combinatorial identity in Lemma~\ref{lemma5.4} does not appear to be directly accessible by automated methods such as the Sigma package or Zeilberger's algorithm.
\end{remark}
\begin{lemma}[{\cite[Theorem 3.1]{Sun2014CANT}}] \label{lemma5.5} For any nonnegative integer $n$ and indeterminates $x,y$, we have
\begin{align*}
P_n(x)P_n(y)=\sum_{k=0}^{n}\binom{n+k}{2k}\binom{2k}{k}\sum_{j=0}^{k}\binom{k+j}{2j}\binom{2j}{j}\left((x+1)(y-1) \right) ^{j}\left(x-y \right) ^{k-j}\frac{1}{2^{k+j}}.
\end{align*}
\end{lemma}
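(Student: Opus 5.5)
Since this is Sun's identity \cite[Theorem 3.1]{Sun2014CANT}, I would prove it directly and in the same spirit as the other identities in this paper: reduce the double sum to a single sum and then verify a recurrence in $n$.

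\emph{Step 1: collapse the inner sum.} Write $P_k(z)$ in the equivalent form
\[
P_k(z)=\sum_{j=0}^{k}\binom{k+j}{2j}\binom{2j}{j}\Bigl(\frac{z-1}{2}\Bigr)^j,
\]
which follows from $\binom{k}{j}\binom{k+j}{j}=\binom{k+j}{2j}\binom{2j}{j}$. Pull the factor $\bigl((x-y)/2\bigr)^k$ out of the inner sum. What remains is $\sum_j\binom{k+j}{2j}\binom{2j}{j}u^j$ with
\[
u=\frac{(x+1)(y-1)}{2(x-y)} .
\]
Setting $u=(z-1)/2$, we have $z=1+\frac{(x+1)(y-1)}{x-y}$, and $(x-y)+(x+1)(y-1)=xy-1$. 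The inner sum is therefore $P_k\bigl(\frac{xy-1}{x-y}\bigr)$, and the claim becomes the single-sum identity
\begin{equation*}
P_n(x)P_n(y)=\sum_{k=0}^{n}\binom{n+k}{2k}\binom{2k}{k}\Bigl(\frac{x-y}{2}\Bigr)^kP_k\Bigl(\frac{xy-1}{x-y}\Bigr).
\end{equation*}
Since $(x-y)^kP_k\bigl(\frac{xy-1}{x-y}\bigr)$ is a polynomial, this is an identity in $\mathbb{Q}[x,y]$, and I may treat $x$ and $y$ as generic.

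\emph{Step 2: a recurrence for the left side.} Each of $P_n(x)$ and $P_n(y)$ satisfies the three-term recurrence
\[
(n+1)P_{n+1}=(2n+1)zP_n-nP_{n-1}.
\]
Their product $L_n=P_n(x)P_n(y)$ therefore satisfies a recurrence of order at most $4$, the symmetric product of the two recurrences. I would compute it by eliminating the mixed terms $P_{n+1}(x)P_n(y)$ and $P_n(x)P_{n+1}(y)$, in practice with a computer algebra system.

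\emph{Step 3: a recurrence for the right side.} The right-hand side $R_n$ from Step 1 is a single sum whose summand contains the holonomic factor $P_k(w)$. Viewing it as a double sum, Zeilberger's algorithm or \texttt{Sigma} (via \texttt{GenerateRecurrence}, exactly as in the proof of Lemma \ref{lem:harmonic}) produces a recurrence in $n$ for $R_n$. I would then check that $L_n$ satisfies this recurrence, or that the two recurrences share a common right factor, and compare initial values. For example, $n=0$ gives $1=1$, and $n=1$ gives
\[
xy=1+2\cdot\frac{x-y}{2}\cdot\frac{xy-1}{x-y}=xy .
\]
A few more initial values settle the remaining ones.

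\emph{Alternative route and main obstacle.} As a more conceptual check, I could apply the Riordan transformation \eqref{RAT} with $a=1$, $d=2$, $m=0$. This expresses $R_n$ as
\[
[t^n]\,\frac{1}{1-t}\,f\!\Bigl(\frac{t}{(1-t)^2}\Bigr),\qquad f(s)=\sum_k\binom{2k}{k}\Bigl(\frac{x-y}{2}\Bigr)^kP_k(w)s^k ,
\]
and one would compare the result with Bailey's classical generating function for $\sum_n P_n(x)P_n(y)t^n$. The obstacle on this route is that $f$ is not elementary: it is a ${}_2F_1$, and matching requires a quadratic transformation. So I would rely on the recurrence method. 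I expect its main difficulty to be obtaining the recurrence for the double sum and verifying that it annihilates the symmetric product of the Legendre recurrences; the order may rise, so more initial values would have to be checked.
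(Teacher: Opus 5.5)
The paper gives no proof of this lemma. It quotes Sun \cite[Theorem~3.1]{Sun2014CANT} and uses the result as a black box in Theorem~\ref{thm5.6}, so your argument is an independent route. Your Step 1 is correct and worthwhile: with $v=(x-y)/2$ and $w=(xy-1)/(x-y)$, the claim becomes $P_n(x)P_n(y)=\sum_k\binom{n+k}{2k}\binom{2k}{k}v^kP_k(w)$, and this also checks out at $n=2$. Steps 2--3 are a standard holonomic argument that will work in principle, but as written they are a plan rather than a proof. One of your criteria is also wrong: a common right factor of the two annihilators says nothing about $L_n$ or $R_n$, since neither need be killed by it. You need a single operator annihilating both, i.e.\ a common left multiple. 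Either verify that the operator found for $R_n$ kills $L_n$ (reduce it modulo the two Legendre recurrences in the basis $P_{n+a}(x)P_{n+b}(y)$, $a,b\in\{0,1\}$), or take the least common left multiple. Then compare values up to index $r+N$, where $r$ is its order and $N$ the largest nonnegative integer root of its leading coefficient. Compared with the citation, this gives a self-contained but machine-dependent certificate. You can avoid the machine entirely. Since $v^2(w^2-1)=(x^2-1)(y^2-1)/4$, one has $v^kP_k(w)=T_k\bigl(\tfrac{xy-1}{2},\tfrac{(x^2-1)(y^2-1)}{16}\bigr)$. By $\frac1\pi\int_0^\pi\cos^{2m}\phi\,d\phi=\binom{2m}{m}4^{-m}$, this equals $\frac1\pi\int_0^\pi\bigl(\frac{z-1}{2}\bigr)^k\,d\phi$ with $z=xy+\sqrt{(x^2-1)(y^2-1)}\cos\phi$. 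Hence your single-sum identity is the classical product formula $P_n(x)P_n(y)=\frac1\pi\int_0^\pi P_n(z)\,d\phi$, expanded termwise via $P_n(z)=\sum_k\binom{n+k}{2k}\binom{2k}{k}\bigl(\frac{z-1}{2}\bigr)^k$. The formula follows from the addition theorem on $[-1,1]^2$ and then holds as a polynomial identity. This also explains the symmetry in $x,y$ that the stated form hides.
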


\begin{theorem}\label{thm5.6}
Let $p>3$ be a prime. Let $a,b,c$ be integers and let $d=b^2-4c$. If $p\nmid a$, then
\begin{equation}\label{5.4+}
\sum_{k=0}^{p-1}\binom{2k}{k}\dfrac{T_{2k}(b+a,c)}{a^{2k}4^k}\equiv \left( \frac{2a}{p}\right)\left( \sqrt{c}\right) ^{\frac{p-1}{2}}P_{\frac{p-1}{2}}\left( \frac{2ab+d}{4a\sqrt{c}}\right)  \pmod p.
\end{equation}
\end{theorem}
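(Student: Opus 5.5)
Modulo $p$ the left side of \eqref{5.4+} only involves $k\le (p-1)/2$, since $\binom{2k}{k}\equiv0\pmod p$ for $p/2<k<p$. So it equals the left side of \eqref{5.1+}. By \eqref{5.1+}, reduced modulo $p$, it is therefore enough to evaluate
\[
p\left(\frac{-1}{p}\right)\sum_{k=0}^{p-1}\frac{T_k(b,c)(-2)^k}{a^k(2k+1)\binom{2k}{k}} \pmod p .
\]
Only a few terms survive modulo $p$. For $k<(p-1)/2$ the factor $(2k+1)\binom{2k}{k}$ is a $p$-adic unit, so those terms vanish. The term $k=(p-1)/2$ has $2k+1=p$ and contributes. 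For $(p-1)/2<k\le p-1$ we have $p\,\|\,\binom{2k}{k}$, so $p/\bigl((2k+1)\binom{2k}{k}\bigr)$ is a unit and these terms also contribute.

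The sum over $(p-1)/2\le k\le p-1$ is awkward. I would instead go back one step and work from the exact identity behind Theorem~\ref{theorem3}. Put $n=(p-1)/2$. Lemma~\ref{lemma2.1} and the inversion used in the proof of Theorem~\ref{theorem3} give, exactly,
\[
\sum_{k=0}^{n}\binom{n+k}{2k}(-4)^k\frac{T_{2k}(b+a,c)}{a^{2k}}
=(-1)^n p\sum_{i}\frac{(-2)^i\binom{p-1}{i}\binom{p+i}{i}}{(2i+1)\binom{2i}{i}}\frac{T_i(b,c)}{a^i}.
\]
The left side is congruent to the sum in \eqref{5.4+} modulo $p^2$, by Lemma~\ref{lemma2.3} and Lemma~\ref{lem:identity}.

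The plan is to show that this combination equals, up to an explicit factor, the sum $\sum_{k}\binom{n+k}{2k+1}T_k(b,c)/x^k$ of Lemma~\ref{lemma5.4} at a suitable $x$ (I expect $x=-2a$ or $x=-a/2$), or a sum close to it. To do this, write $T_{2k}(b+a,c)/a^{2k}=\sum_i\binom{2k}{i}T_i(b,c)a^{-i}$ and swap the order of summation. Then match the resulting coefficient of $T_i(b,c)a^{-i}$ with a Riordan array \eqref{RAT} type expression, using $\binom{n+k}{2k}\binom{2k}{i}=\binom{n+k}{i}\binom{n+k-i}{2k-i}$. The right side of Lemma~\ref{lemma5.4} is then a convolution $\sum_{j=0}^{n}P_j(u)P_{n-j}(v)$ with
\[
u,v=1-\frac{b\pm2\sqrt c}{4a}
\]
(for $x=-2a$).

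Next I reduce the convolution modulo $p$ with $n=(p-1)/2$. One uses $P_j(u)\equiv$ a truncated hypergeometric sum and the classical fact $\binom{n}{k}\binom{n+k}{k}\equiv\binom{2k}{k}^2/16^k$, together with the Legendre-polynomial analogue of the symmetry $P_n(x)\equiv \pm P_n(-x)$ modulo $p$. The aim is to collapse the convolution to a single product $P_n(u')P_n(v')$. Lemma~\ref{lemma5.5}, applied in reverse, then turns that product into $P_n$ at one argument. Here $(x+1)(y-1)$ and $x-y$ should combine into $\frac{2ab+d}{4a\sqrt c}$, using $d=b^2-4c$ and the homogeneity $T_n(b,c)=(\sqrt d)^nP_n(b/\sqrt d)$. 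The powers of $\sqrt c$, of $2$ and of $a$ that appear, together with $(-1)^n=\left(\frac{-1}{p}\right)$, should assemble through Euler's criterion into $\left(\frac{2a}{p}\right)(\sqrt c)^{(p-1)/2}$.

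I expect the main obstacle to be this last reduction: turning the Cauchy convolution $\sum_j P_j(u)P_{n-j}(v)$ modulo $p$ into a single Legendre value $P_n$. If the convolution route stalls, my fallback is to verify the congruence directly as a polynomial identity in $a^{-1}$, $b$ and $c$ modulo $p$. Both sides have degree at most $p-1$ in these variables. I would compare them via the generating functions $(1-2bt+dt^2)^{-1/2}$, since $\bigl(1-2bt+dt^2\bigr)^{(p-1)/2}$ gives the mod-$p$ truncation of $P_n$.
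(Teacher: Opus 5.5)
There is a genuine gap. Your opening reduction is sound: drop $k>\frac{p-1}{2}$, use \eqref{5.1+}, and write the exact form via Lemma~\ref{lemma2.1}. One sign needs fixing: Lemma~\ref{lemma2.1} gives $2^i$, not $(-2)^i$. The $(-1)^i$ in \eqref{5.1+} only appears after using $\binom{p-1}{i}\binom{p+i}{i}\equiv(-1)^i\pmod{p^2}$.

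\textbf{The specialization of Lemma~\ref{lemma5.4}.} This is mis-specified, and everything downstream depends on it. No Riordan manipulation is needed, because exactly
\[
\frac{p\binom{p-1}{i}\binom{p+i}{i}}{(2i+1)\binom{2i}{i}}=\binom{p+i}{2i+1}.
\]
The paper uses the equivalent congruence $\binom{p+k}{2k+1}\equiv p(-1)^k/\bigl((2k+1)\binom{2k}{k}\bigr)\pmod{p^2}$ together with \eqref{5.1+}. Your right-hand side is therefore $(-1)^{(p-1)/2}\sum_{i=0}^{p-1}\binom{p+i}{2i+1}2^iT_i(b,c)/a^i$. This is Lemma~\ref{lemma5.4} with its $n$ equal to $p$, not $(p-1)/2$, and with $x=a/2$, not $-2a$ or $-a/2$. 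The resulting convolution is $\sum_{k=0}^{p-1}P_k(u)P_{p-1-k}(v)$ with $u,v=1+\frac{b\pm2\sqrt c}{a}$. The shorter convolution at $1-\frac{b\pm2\sqrt c}{4a}$ that you propose is a different quantity.

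\textbf{The reduction modulo $p$.} You flag this as the main obstacle, and the mechanism you suggest does not work. Lemma~\ref{lemma5.5} never turns a single product into one Legendre value. Its inner $j$-sum shows that $P_n(x)P_n(y)=\sum_{k=0}^{n}\binom{n+k}{2k}\binom{2k}{k}\bigl(\frac{x-y}{2}\bigr)^kP_k\bigl(1+\frac{(x+1)(y-1)}{x-y}\bigr)$, which is a sum over all $k\le n$. The paper proceeds in three steps.
\begin{enumerate}
\item It uses the index reflection $P_{p-1-k}(x)\equiv P_k(x)\pmod p$ to replace the convolution by $\sum_{k=0}^{p-1}P_k(u)P_k(v)$. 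This is the relevant symmetry, not the argument symmetry $P_n(-x)=(-1)^nP_n(x)$.
\item It expands each term forward by Lemma~\ref{lemma5.5} and interchanges the sums. The new $k$-sum is $\sum_{k=i}^{p-1}\binom{k+i}{2i}\equiv\sum_{k=0}^{p-1-i}\binom{p-1-2i}{k}(-1)^k=0\pmod p$ for $i<\frac{p-1}{2}$. For $i>\frac{p-1}{2}$ one has $\binom{2i}{i}\equiv0\pmod p$. So only $i=\frac{p-1}{2}$ survives.
\item The single Legendre value comes from the inner $j$-sum at that $i$. Since $u-v=4\sqrt c/a$ and $1+\frac{(u+1)(v-1)}{u-v}=\frac{2ab+d}{4a\sqrt c}$, this sum equals $\bigl(\frac{2\sqrt c}{a}\bigr)^{(p-1)/2}P_{(p-1)/2}\bigl(\frac{2ab+d}{4a\sqrt c}\bigr)$. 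Morley's congruence $\binom{p-1}{(p-1)/2}\equiv(-1)^{(p-1)/2}$ and Euler's criterion then give the factor $\left(\frac{2a}{p}\right)(\sqrt c)^{(p-1)/2}$.
\end{enumerate}
Your fallback of comparing both sides as polynomials through generating functions is not specific enough to close this gap.
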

\begin{proof}Let $k\in\{0,1,2,\ldots,p-1\}$. It is easy to show that
\begin{align*}
\binom{p+k}{2k+1}\equiv p\frac{(-1)^k}{(2k+1)\binom{2k}{k}}\pmod{p^2}.
\end{align*}
This, with \eqref{5.1+} yields
\begin{equation*}
\sum_{k=0}^{(p-1)/2}\binom{2k}{k}\dfrac{T_{2k}(b+a,c)}{a^{2k}4^k}\equiv \left( \frac{-1}{p}\right)  \sum_{k=0}^{p-1}\binom{p+k}{2k+1}\frac{T_k(b,c)2^k}{a^k}\pmod{p^2}.
\end{equation*}
Applying Lemma \ref{lemma5.4}, for $n=p-1$, we have
\begin{equation*}
\sum_{k=0}^{(p-1)/2}\binom{2k}{k}\dfrac{T_{2k}(b+a,c)}{a^{2k}4^k}\equiv \left( \frac{-1}{p}\right)\sum_{k=0}^{p-1}
        P_k\left(1 + \frac{b + 2\sqrt{c}}{a} \right) 
        P_{p-1-k}\left(1 + \frac{b -2\sqrt{c}}{a} \right)\pmod{p^2}.
\end{equation*}
It is known that $P_{p-1-k}\equiv P_k \pmod p$ (see \cite[Lemma 6.2]{wahab1952}). Hence 
\begin{equation*}
\sum_{k=0}^{(p-1)/2}\binom{2k}{k}\dfrac{T_{2k}(b+a,c)}{a^{2k}4^k}\equiv \left( \frac{-1}{p}\right)\sum_{k=0}^{p-1}
        P_k\left(1 + \frac{b + 2\sqrt{c}}{a} \right) 
        P_{k}\left(1 + \frac{b -2\sqrt{c}}{a} \right)\pmod{p}.
\end{equation*}
By Lemma \ref{lemma5.5}, we obtain
\begin{align*}
&\sum_{k=0}^{(p-1)/2}\binom{2k}{k}\dfrac{T_{2k}(b+a,c)}{a^{2k}4^k}\\
&\equiv \left( \frac{-1}{p}\right)\sum_{k=0}^{p-1}\sum_{i=0}^{k}\binom{k+i}{2i}\binom{2i}{i}\sum_{j=0}^{i}\binom{i+j}{2j}\binom{2j}{j}\frac{\left(2a+b + 2\sqrt{c} \right) ^{j}\left(b-2\sqrt{c} \right) ^{j}\left(4\sqrt{c} \right) ^{i-j}}{(2a)^{i+j}} \pmod{p}\\
&=\sum_{i=0}^{p-1}\left( \frac{-1}{p}\right)\binom{2i}{i}\sum_{j=0}^{i}\binom{i+j}{2j}\binom{2j}{j}\frac{\left(2a+b + 2\sqrt{c} \right) ^{j}\left(b-2\sqrt{c} \right) ^{j}\left(4\sqrt{c} \right) ^{i-j}}{(2a)^{i+j}}\sum_{k=i}^{p-1}\binom{k+i}{2i}\pmod p.
\end{align*}
Recall that $\binom{2i}{i}\equiv 0 \pmod p$ if $p/2<i\leq p-1.$ Also note that
\begin{align*}
\sum_{k=i}^{p-1}\binom{k+i}{2i}=\sum_{k=0}^{p-1-i}\binom{k+2i}{2i}\equiv \sum_{k=0}^{p-1-i}\binom{p-1-2i}{k}(-1)^k\pmod p.
\end{align*}
Combining the above results, we have
\begin{align*}
&\sum_{k=0}^{(p-1)/2}\binom{2k}{k}\dfrac{T_{2k}(b+a,c)}{a^{2k}4^k}\\
&\equiv \left( \frac{-1}{p}\right)\binom{p-1}{\frac{p-1}{2}}\sum_{j=0}^{(p-1)/2}\binom{\frac{p-1}{2}+j}{2j}\binom{2j}{j}\frac{\left(2a+b + 2\sqrt{c} \right) ^{j}\left(b-2\sqrt{c} \right) ^{j}\left(4\sqrt{c} \right) ^{\frac{p-1}{2}-j}}{(2a)^{\frac{p-1}{2}+j}}\pmod p.
\end{align*}
Finally, by Morley's congruence and Fermat's little theorem gives \eqref{5.4+}. This completes the proof.
\end{proof}
We now apply Theorem~\ref{thm5.6} to confirm some conjectures proposed by Sun \cite{Sun2014CANT}.
\begin{corollary}{\cite[Conjecture 2.3]{Sun2014CANT}} Let p be an odd prime. Then
\begin{align*}
\sum_{k=0}^{(p-1)/2}\binom{2k}{k}\dfrac{T_{2k}(12,-7)}{16^k}\equiv \begin{cases}
2x(\frac{x}{7})\pmod{p}& \text{ if } (\frac{p}{7})=1 \text{ and }p=x^2+7y^2,\\
0\pmod{p} & \text{ if }(\frac{p}{7})=-1 \text{ and }p=x^2+7y^2.
\end{cases}
\end{align*}
\end{corollary}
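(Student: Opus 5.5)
The plan is to specialize Theorem~\ref{thm5.6} so that its left-hand side becomes the sum in the corollary. Then I reduce the right-hand side to a Hasse invariant of an elliptic curve with complex multiplication by $\mathbb{Q}(\sqrt{-7})$.

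\textbf{Step 1 (specialization).} Take $a=2$, $b=10$, $c=-7$. Then $b+a=12$, $a^{2k}4^k=16^k$ and $\left(\frac{2a}{p}\right)=\left(\frac{4}{p}\right)=1$. Also $d=b^2-4c=128$, so the argument of the Legendre polynomial is
\[
\frac{2ab+d}{4a\sqrt{c}}=\frac{168}{8\sqrt{-7}}=\frac{21}{\sqrt{-7}}.
\]
The sum in Theorem~\ref{thm5.6} runs to $p-1$, but $\binom{2k}{k}\equiv0\pmod p$ for $p/2<k<p$, so it agrees modulo $p$ with the sum to $(p-1)/2$ in the corollary. With $n=(p-1)/2$ this gives
\[
\sum_{k=0}^{n}\binom{2k}{k}\frac{T_{2k}(12,-7)}{16^k}\equiv (\sqrt{-7})^{n}P_{n}\!\left(\frac{21}{\sqrt{-7}}\right)\pmod p .
\]
By the relation $T_n(b',c')=(\sqrt{d'})^nP_n(b'/\sqrt{d'})$ with $b'=21$ and $d'=-7$ (so $c'=112$), the right side equals $T_{n}(21,112)$. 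This is an honest integer, and the choice of branch of $\sqrt{-7}$ is irrelevant because $P_n(-x)=(-1)^nP_n(x)$. So it remains to show that $T_{(p-1)/2}(21,112)$ is congruent to $2x(\frac{x}{7})$ or $0$ modulo $p$, according as $(\frac p7)=1$ or $-1$.

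\textbf{Step 2 (Hasse invariant).} Write $P_n(t)\equiv(-1)^n\sum_k\binom{n}{k}^2\lambda^k\pmod p$ with $\lambda=(1-t)/2$. This uses $\binom{n+k}{k}\equiv(-1)^k\binom nk\pmod p$ for $n=(p-1)/2$. The right-hand sum is the Hasse invariant of the Legendre curve $E_\lambda:\,y^2=x(x-1)(x-\lambda)$. Homogenizing with the factor $(\sqrt{-7})^n$ gives an equivalent form that avoids $\sqrt{-7}$: $T_n(21,112)$ is, up to a factor $\left(\frac{\cdot}{p}\right)$ that I will track, congruent modulo $p$ to $-\sum_{x\bmod p}\left(\frac{f(x)}{p}\right)$ for an explicit rational cubic $f(x)$. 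One candidate is obtained by writing $x^2+21x+112$ times a linear factor; I will find $f$ by matching coefficients of $x^{p-1}$ in $f(x)^{(p-1)/2}$. The expected outcome is a curve with $j=-3375$, i.e.\ CM by $\mathbb{Z}[\frac{1+\sqrt{-7}}2]$, for example $y^2=x^3-1715x+33614$ or a twist of it.

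\textbf{Step 3 (CM evaluation).} For a CM curve of this kind, $a_p=0$ when $\left(\frac{p}{7}\right)=-1$, which gives the second case. When $p=x^2+7y^2$, the classical evaluation of the corresponding character sum gives $a_p=\pm2x$ with the sign $\left(\frac{x}{7}\right)$, possibly times a quadratic character coming from the twist. Reducing modulo $p$ then yields the claimed values. The primes excluded by Theorem~\ref{thm5.6}, and the prime $p=7$, will be checked directly; for $p=3$ the sum has two terms.

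\textbf{Main obstacle.} Step 3 is where I expect the difficulty: identifying precisely which twist of the $j=-3375$ curve arises, so that the sign $\left(\frac{x}{7}\right)$ and any extra Legendre-symbol factor come out exactly right. I would first try to match with the known evaluation of $\sum_{x}\left(\frac{x^3-35x-98}{p}\right)$ (Rajwade-type results) via an explicit change of variables. Failing that, I would look for an existing congruence for $P_{(p-1)/2}$ at this argument in Z.-W.~Sun's or Z.-H.~Sun's papers on Legendre polynomials and binary quadratic forms. I would also test numerically on small primes such as $p=11,23,29$ to pin down the sign convention.
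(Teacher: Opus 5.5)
Your strategy is sound, and the step you flag as the main obstacle closes at once. After the specialization $(a,b,c)=(2,10,-7)$ of Theorem~\ref{thm5.6}, which the paper also uses, your route genuinely differs from the paper's. With $n=(p-1)/2$, take $f(x)=x(x^2+21x+112)$. Then $[x^{p-1}]f(x)^n=[x^n](x^2+21x+112)^n=T_n(21,112)$ exactly. Since $\sum_{x\in\mathbb{F}_p}x^m\equiv 0$ unless $m>0$ and $p-1\mid m$, and $\deg f^n<2(p-1)$, Euler's criterion gives $T_n(21,112)\equiv-\sum_{x\bmod p}\left(\frac{f(x)}{p}\right)\pmod p$ with no extra Legendre symbol.

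The substitution $x=X-7$ turns $f$ into exactly $X^3-35X-98$. This is a model of the conductor-$49$ curve with $j=-3375$ and CM by $\mathbb{Z}[\frac{1+\sqrt{-7}}{2}]$; $y^2=x(x^2+21x+112)$ is the curve studied by Rajwade. So there is no twist to identify: the right-hand side is $a_p$ of this curve. It is $0$ when $(\frac{p}{7})=-1$ and $2x(\frac{x}{7})$ when $p=x^2+7y^2$.

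Quote that evaluation with the correct sign: the character sum itself is $-2x(\frac{x}{7})$. For $p=11=2^2+7\cdot 1^2$ one finds $\sum_{X}(\frac{X^3-35X-98}{11})=-4$ and $T_5(21,112)\equiv 4\pmod{11}$, in agreement with the left side.

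Two smaller points. Your Legendre-curve detour in Step 2 is unnecessary, and its factor $(-1)^n$ is a slip: with $\lambda=(1-t)/2$ one has $P_n(t)\equiv\sum_k\binom{n}{k}^2\lambda^k\pmod p$ (check $t=1$). Also, $p=7$ needs no check, since neither case of the statement applies.

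The paper instead stays inside the Legendre-polynomial framework. It rewrites the right side as $(-3)^n(\sqrt{-63})^nP_n(\sqrt{-63})$ and applies \cite[Lemma 2.5]{zhsun2013} to reach $(\frac{-3}{p})(\frac{-63}{p})P_{[p/4]}(-\frac{65}{63})$. It then imports the evaluation of this from the proof of \cite[Theorem 2.5]{zhsun2013} and finishes with $(\frac{-3}{p})(\frac{p}{3})=1$.

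Your observation that the right side is the integer $T_{(p-1)/2}(21,112)$ has three advantages:
\begin{itemize}
\item it removes the square roots and the transformation to $P_{[p/4]}$;
\item it identifies the quantity directly as a Frobenius trace, which explains the vanishing at inert primes;
\item your separate check of $p=3$ covers a prime that the paper's argument passes over, since it rests on Theorem~\ref{thm5.6} for $p>3$.
\end{itemize}
The paper's route gains uniformity with its other corollaries by reusing a ready-made congruence of Z.-H.~Sun, at the cost of an extra quadratic transformation. Yours needs only the standard Hasse-invariant congruence plus one classical CM evaluation.
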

\begin{proof}
Taking $a=2$, $b=10$ and $c=-7$ in \eqref{5.4+} and using the fact that $P_n(-x)=(-1)^nP_n(x)$, with some simplifications, we obtain
\begin{align*}
\sum_{k=0}^{(p-1)/2}\binom{2k}{k}\dfrac{T_{2k}(12,-7)}{16^k} & \equiv (-1)^{\frac{p-1}{2}} \left( \sqrt{-7}\right) ^{\frac{p-1}{2}}P_{\frac{p-1}{2}}\left(3\sqrt{-7}\right)  \pmod p\\
&=(-3)^{\frac{p-1}{2}} \left( \sqrt{-63}\right) ^{\frac{p-1}{2}}P_{\frac{p-1}{2}}\left(\sqrt{-63}\right)  \pmod p.
\end{align*}
By \cite[Lemma 2.5]{zhsun2013}, we obtain
\begin{align*}
\sum_{k=0}^{(p-1)/2}\binom{2k}{k}\dfrac{T_{2k}(12,-7)}{16^k} & \equiv \left( \frac{-3}{p}\right)\left( \frac{-63}{p}\right)P_{[\frac{p}{4}]}\left( -\frac{65}{63}\right)    \pmod p.
\end{align*}
Sun established that (see the proof of \cite[Theorem 2.5]{zhsun2013}): for $p\neq 2,3,7$, we have
\begin{align}\label{5.5+}
\left( \frac{-63}{p}\right)P_{[\frac{p}{4}]}\left(- \frac{65}{63}\right)\equiv \begin{cases}
2x(\frac{p}{3})(\frac{x}{7})\pmod{p}& \text{ if } p\equiv 1,2,4\pmod{7} \text{ and so }p=x^2+7y^2,\\
0\pmod{p} & \text{ if }p\equiv 3,5,6\pmod{7}.
\end{cases}
\end{align}
Note that $( \frac{-3}{p})\left( \frac{p}{3}\right)=1.$ Combining this with \eqref{5.5+} yields the desired result.
\end{proof}
\begin{lemma}\label{lem:minus3quartic}
Let $p\equiv1\pmod4$ be a prime and write $p=x^2+y^2$ with $2\mid y$ and
$x+y\equiv1\pmod4$. Then
\begin{align*}
(-3)^{\frac{p-1}{4}}\equiv
\begin{cases}
-1\pmod p & \text{if }3\mid x,\\
1\pmod p & \text{if }3\mid y,\\
x/y\pmod p & \text{if }x\equiv y\pmod3,\\
-x/y\pmod p & \text{if }x\equiv-y\pmod3.
\end{cases}
\end{align*}
\end{lemma}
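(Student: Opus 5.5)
The plan is to use quartic reciprocity in the Gaussian integers $\mathbb{Z}[i]$. Set $\pi:=x+yi$, so that $p=\pi\bar\pi$ and $\mathbb{Z}[i]/(\pi)\cong\mathbb{F}_p$. The normalization in the statement ($x$ odd, $2\mid y$, $x+y\equiv1\pmod 4$) is exactly the condition for $\pi$ to be \emph{primary}, i.e.\ $\pi\equiv1\pmod{(1+i)^3}$; I will check this directly with a short case analysis on $y\bmod 4$. By Euler's criterion in $\mathbb{Z}[i]/(\pi)$, the quartic residue symbol satisfies $\left(\frac{-3}{\pi}\right)_4\equiv(-3)^{(p-1)/4}\pmod{\pi}$, and it takes values in $\{\pm1,\pm i\}$. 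Because both sides of the lemma are rational residues mod $p$, a congruence mod $\pi$ between rational integers gives the same congruence mod $p$. Finally, $x+yi\equiv0\pmod{\pi}$ yields $i\equiv -x/y\pmod{\pi}$; here $p\nmid y$ since $0<y^2<p$. So once the symbol is known, it translates into the stated residues.

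The key step is evaluating $\left(\frac{-3}{\pi}\right)_4$. The element $-3$ is a Gaussian prime (since $3\equiv3\pmod4$), of norm $9$, and it is primary because $-3-1=-4$ is divisible by $(1+i)^3$. Quartic reciprocity for distinct primary primes gives
\[
\left(\frac{-3}{\pi}\right)_4=\left(\frac{\pi}{-3}\right)_4(-1)^{\frac{p-1}{4}\cdot\frac{9-1}{4}}=\left(\frac{\pi}{-3}\right)_4,
\]
since $(9-1)/4=2$ is even. By Euler's criterion modulo the prime $3$, $\left(\frac{\pi}{-3}\right)_4\equiv\pi^{(9-1)/4}=\pi^2=x^2-y^2+2xyi\pmod 3$, computed in $\mathbb{F}_9=\mathbb{Z}[i]/(3)$. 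This applies because $3\nmid\pi$: the case $3\mid x$ and $3\mid y$ would force $9\mid p$.

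The remaining work is a case split on $(x,y)\bmod 3$.
\begin{itemize}
\item If $3\mid x$, then $\pi^2\equiv -y^2\equiv-1$, so the symbol is $-1$.
\item If $3\mid y$, then $\pi^2\equiv x^2\equiv1$, so the symbol is $1$.
\item If $x\equiv y\not\equiv0$, then $x^2-y^2\equiv0$ and $2xy\equiv2x^2\equiv-1$, so $\pi^2\equiv -i$. Hence $(-3)^{(p-1)/4}\equiv -i\equiv x/y\pmod{\pi}$.
\item If $x\equiv -y$, then $2xy\equiv -2x^2\equiv 1$, so $\pi^2\equiv i$. Hence $(-3)^{(p-1)/4}\equiv i\equiv -x/y$.
\end{itemize}
These are exactly the four cases of the lemma.

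The main obstacle is bookkeeping rather than any deep step. One must pin down the sign conventions: that the given normalization really makes $\pi$ primary, and that it is $i\equiv -x/y$ (not $+x/y$) modulo $\pi$. If quartic reciprocity feels too heavy, an alternative route is to write $-3=(\sqrt{-3})^2$ and use known formulas for the quadratic character of $\sqrt{-3}$ modulo $p$. However, I would first carry out the reciprocity argument, since it is short and self-contained, and cite Ireland--Rosen for the statements of quartic reciprocity and primary normalization.
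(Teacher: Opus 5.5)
Your proof is correct. It differs from the paper's mainly in which key result it invokes.

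The paper never applies quartic reciprocity directly. It quotes Z.-H.~Sun's criterion (Theorem~2.2 of his 2001 paper on quartic residues): $(-3)^{(p-1)/4}\equiv (y/x)^r\pmod p$ if and only if $x/y\in Q_r(3/\gcd(y,3))$. Here $Q_r(m)$ consists of those $c$ with $\left(\frac{c+i}{m}\right)_4=i^r$. The paper handles $3\mid y$ through the degenerate modulus $1$, using the convention $\left(\frac{\alpha}{1}\right)_4=1$. For $3\nmid y$ it tabulates $(c+i)^2$ in $\mathbb{F}_9$ and converts $(y/x)^r$ using $y/x\equiv -x/y\pmod p$.

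Your argument is essentially that criterion unpacked for the single auxiliary prime $q=3$. With $\pi=x+yi$ and $-3$ both primary, reciprocity reduces everything to $\pi^2\bmod 3$. When $3\nmid y$ one has $\pi^2=y^2(x/y+i)^2\equiv(x/y+i)^2\pmod 3$, which is exactly the paper's table. Your uniform formula $\pi^2\equiv x^2-y^2+2xyi$ also settles the case $3\mid y$ without any degenerate symbol.

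What each route buys:
\begin{itemize}
\item Your route is self-contained: it needs only textbook quartic reciprocity and Euler's criterion in $\mathbb{Z}[i]$. Its sign bookkeeping is transparent through $i\equiv -x/y\pmod{\pi}$.
\item The paper's route is shorter and gives a template that works unchanged for other odd auxiliary primes $q$.
\end{itemize}

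The only fact you use without stating it is that $\pm1,\pm i$ are distinct modulo $3$, so $\pi^2\bmod 3$ determines the symbol. This is immediate.
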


\begin{proof}
For an odd integer $q$ set $q^*=(-1)^{(q-1)/2}q$.
For an odd prime $p>3$ and $r\in\{0,1,2,3\}$ define
\[
Q_r(p)=\Bigl\{c\in\mathbb Z_p:\Bigl(\frac{c+i}{p}\Bigr)_4=i^r\Bigr\},
\]
where $(\frac{\cdot}{\cdot})_4$ denotes the quartic Jacobi symbol in
$\mathbb Z[i]$.  By convention $(\frac{\alpha}{1})_4=1$ for all
$\alpha\in\mathbb Z[i]$.

Apply \cite[Theorem~2.2]{Sun2001} with the auxiliary prime $q=3$.
 The hypothesis $\gcd\bigl(y,3/\gcd(y,3)\bigr)=1$ holds automatically,
because either $\gcd(y,3)=1$ or
$\gcd(y,3)=3$. Thus for $r=0,1,2,3$,
\begin{align}\label{5.7+}
(-3)^{\frac{p-1}{4}}\equiv(y/x)^r\pmod p
\quad\Longleftrightarrow\quad
x/y\in Q_r\!\Bigl(\frac{3}{\gcd(y,3)}\Bigr).
\end{align}
\textbf{Case $3\mid y$.}  Then $\gcd(y,3)=3$, so the modulus in \eqref{5.7+}
degenerates to $Q_r(1)$.  Because $(\frac{c+i}{1})_4=1=i^0$ for every
$c\in\mathbb Z_p$, we have $Q_0(1)=\mathbb Z_p$ and
$Q_1(1)=Q_2(1)=Q_3(1)=\varnothing$.  Hence $x/y\in Q_0(1)$ forces
$r=0$, and \eqref{5.7+} yields
\[
(-3)^{\frac{p-1}{4}}\equiv(y/x)^0\equiv1\pmod p.
\]

\textbf{Case $3\nmid y$.}  Then $\gcd(y,3)=1$ and we use $Q_r(3)$
directly.  A direct computation in $\mathbb F_9=\mathbb Z[i]/(3)$ shows
\[
Q_0(3)=\varnothing,\quad Q_1(3)=\{2\},\quad Q_2(3)=\{0\},\quad
Q_3(3)=\{1\}
\]
(i.e.\ $(0+i)^2\equiv i^2$, $(1+i)^2\equiv i^3$, $(2+i)^2\equiv i^1$
in $\mathbb F_9$).  Hence $x/y\bmod3\in\{0,1,2\}$ corresponds to
$r\in\{2,3,1\}$ respectively.  Since $x^2+y^2=p$, the quantity
$s:=x/y\bmod p$ satisfies $s^2\equiv-1\pmod p$; consequently
$y/x\equiv s^{-1}\equiv -s\pmod p$.  Substituting the three
possibilities into \eqref{5.7+} gives
\begin{align*}
(-3)^{\frac{p-1}{4}}\equiv
\begin{cases}
-1\pmod p & \text{if }3\mid x,\\
x/y\pmod p & \text{if }x\equiv y\pmod3,\\
-x/y\pmod p & \text{if }x\equiv-y\pmod3.
\end{cases}
\end{align*}
Combining both cases yields the four stated congruences.
\end{proof}
\begin{lemma}\label{lem:three-quartic}
Let $p\equiv1\pmod4$ be a prime and write $p=x^2+y^2$ with $2\mid y$
and $x\equiv1\pmod4$. Then
\[
3^{\frac{p-1}{4}}\equiv
\begin{cases}
(-1)^{\lfloor x/6\rfloor+y/2}\pmod p & \text{if }12\mid p-1,\\
(-1)^{(x+y+1)/2}\,y/x\pmod p & \text{if }12\mid p-5.
\end{cases}
\]
\end{lemma}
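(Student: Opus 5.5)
The plan is to reduce the statement to Lemma~\ref{lem:minus3quartic} via the factorization
\[
3^{\frac{p-1}{4}}=(-3)^{\frac{p-1}{4}}\,(-1)^{\frac{p-1}{4}}.
\]
\textbf{The factor $(-1)^{(p-1)/4}$.} Since $x$ is odd and $y$ is even, $p=x^2+y^2\equiv 1+y^2\pmod{8}$. Hence $p\equiv1\pmod8$ exactly when $4\mid y$, so
\[
(-1)^{\frac{p-1}{4}}=(-1)^{y/2}.
\]
\textbf{Matching the normalizations.} Lemma~\ref{lem:minus3quartic} assumes $x+y\equiv1\pmod4$, whereas here $x\equiv1\pmod4$. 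Put $\varepsilon=(-1)^{y/2}$ and $x'=\varepsilon x$. If $4\mid y$, then $x'=x$ and $x'+y\equiv1\pmod4$. If $y\equiv2\pmod4$, then $x'=-x\equiv3\pmod4$ and $x'+y\equiv1\pmod4$. So Lemma~\ref{lem:minus3quartic} applies to the pair $(x',y)$, and it remains to split according to $p\bmod 3$.

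\textbf{Case $12\mid p-1$.} Here $p\equiv1\pmod3$, so exactly one of $x,y$ is divisible by $3$ (both divisible would force $9\mid p$; neither gives $p\equiv2\pmod3$).
\begin{itemize}
\item If $3\mid y$, Lemma~\ref{lem:minus3quartic} gives $(-3)^{(p-1)/4}\equiv1$, so $3^{(p-1)/4}\equiv(-1)^{y/2}$. The conditions $x\equiv1\pmod4$ and $3\nmid x$ force $x\equiv1,5\pmod{12}$. Writing $x=12m+1$ or $x=12m+5$ (with $m$ possibly negative) gives $\lfloor x/6\rfloor=2m$, which is even, as the formula requires.
\item If $3\mid x$, then $3\mid x'$ and the lemma gives $(-3)^{(p-1)/4}\equiv-1$, so $3^{(p-1)/4}\equiv-(-1)^{y/2}$. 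Now $x\equiv9\pmod{12}$, and $\lfloor(12m+9)/6\rfloor=2m+1$ is odd, which again matches.
\end{itemize}

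\textbf{Case $12\mid p-5$.} Here $p\equiv2\pmod3$, so $3\nmid xy$ and $x'\equiv\pm y\pmod3$. By Lemma~\ref{lem:minus3quartic}, $(-3)^{(p-1)/4}\equiv\delta x'/y$, where $\delta=1$ if $x'\equiv y\pmod3$ and $\delta=-1$ otherwise. Multiplying by $(-1)^{(p-1)/4}=\varepsilon$ and using $\varepsilon^2=1$ gives
\[
3^{\frac{p-1}{4}}\equiv\delta\,x/y\pmod p.
\]
Since $x^2\equiv-y^2\pmod p$, we have $x/y\equiv-y/x\pmod p$. So the claim reduces to the sign identity
\[
-\delta=(-1)^{(x+y+1)/2},\qquad \delta=1\iff (-1)^{y/2}x\equiv y\pmod3 .
\]
Both sides depend only on $x\bmod 12$ and $y\bmod 12$. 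The admissible residues are $x\equiv1,5\pmod{12}$ and $y\in\{2,4,8,10\}\pmod{12}$, giving eight pairs to check directly.

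\textbf{Expected obstacle.} The only delicate points are this sign verification and the sign change $x\mapsto -x$ between the two normalizations. I will tabulate the eight residue pairs; for example, $x\equiv1$, $y\equiv2\pmod{12}$ gives $x'\equiv-1\equiv y\pmod3$, so $\delta=1$, while $(x+y+1)/2\equiv2\pmod 6$ is even. In that case both sides equal $-1$, as required.
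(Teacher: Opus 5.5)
Your reduction is the same as the paper's: factor out $(-1)^{(p-1)/4}=(-1)^{y/2}$ and apply Lemma~\ref{lem:minus3quartic} to $(\pm x,y)$. Your treatment of the case $12\mid p-1$ is correct. The gap is the closing sign check in the case $12\mid p-5$, because the identity $-\delta=(-1)^{(x+y+1)/2}$ does not hold for all eight residue pairs. Since $x\equiv1\pmod4$, $(x+1)/2$ is odd, so $(-1)^{(x+y+1)/2}=-(-1)^{y/2}=-\varepsilon$. Your identity is therefore equivalent to $\delta=\varepsilon$, and this holds exactly when $x\equiv y\pmod 3$. It fails for the four pairs with $x\equiv -y\pmod 3$. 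Your own sample is one of them: for $x\equiv1$, $y\equiv2\pmod{12}$ you correctly get $-\delta=-1$, but $(x+y+1)/2$ is even, so $(-1)^{(x+y+1)/2}=+1$, not $-1$. Concretely, $p=5$, $x=1$, $y=2$ gives $3^{(p-1)/4}=3$, while $(-1)^{(x+y+1)/2}y/x=2$.

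This cannot be repaired by more careful bookkeeping, because the second case of the statement depends on the sign of $y$. The factor $(-1)^{(x+y+1)/2}$ is unchanged under $y\mapsto -y$ ($y$ is even), while $y/x$ changes sign. The hypotheses $2\mid y$, $x\equiv1\pmod4$ do not fix that sign, so as literally written the formula is false for one of the two choices. The paper's proof handles this tacitly. It shows $3^{(p-1)/4}\equiv T(y)$ when $x\equiv y\pmod 3$ and $3^{(p-1)/4}\equiv T(-y)=-T(y)$ when $x\equiv -y\pmod 3$, and then writes ``$T(\pm y)$'', which amounts to silently choosing the sign of $y$. To finish your argument, make that choice explicit. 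In the case $12\mid p-5$, take the sign of $y$ with $y\equiv x\pmod 3$; this is always attainable by replacing $y$ with $-y$, and the first case is insensitive to the sign of $y$. With this normalization $\delta=\varepsilon$ holds automatically, and your proof closes without any table.
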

\begin{proof}
Since $x$ is odd, $x^2\equiv1\pmod8$. Thus, one observes that
 $4\mid y\iff p\equiv1\pmod8$ and
$y\equiv2\pmod4\iff p\equiv5\pmod8$. We distinguish two cases.
\smallskip

\textbf{Case $p\equiv1\pmod8$.} Since
$x\equiv1\pmod4$ we get $x+y\equiv1\pmod4$; that is, $(x,y)$ already
satisfies the hypothesis of Lemma~\ref{lem:minus3quartic}. Also $(p-1)/4$ is even, so
$(-1)^{(p-1)/4}=(-1)^{y/2}=1$, and $3^{(p-1)/4}\equiv(-3)^{(p-1)/4}
\pmod p$.
\smallskip

\textbf{Case $p\equiv5\pmod8$.} Here $y\equiv2\pmod4$, so
$x+y\equiv3\pmod4$; thus $(x,y)$ does \emph{not} satisfy the
hypothesis of Lemma~\ref{lem:minus3quartic}, but $(-x,y)$ does, since
$-x+y\equiv1\pmod4$. Also, $(p-1)/4$ is odd here, so $3^{(p-1)/4}\equiv-(-3)^{(p-1)/4}\pmod p$.

Combining both cases, we deduce
\begin{equation}\label{eq:pf-uniform}
3^{\frac{p-1}{4}}\equiv
\begin{cases}
-(-1)^{y/2}\pmod p & 3\mid x,\\
(-1)^{y/2}\pmod p & 3\mid y,\\
(-1)^{y/2}x/y\pmod p & x\equiv y\pmod3,\\
-(-1)^{y/2} x/y \pmod p & x\equiv-y\pmod3.
\end{cases}
\end{equation}
Now suppose $3\mid x$. Since $x\equiv1\pmod4$, we have $x\equiv9\pmod{12}$, and hence $\lfloor x/6\rfloor$ is odd. If
$3\mid y$, then $3\nmid x$, so $x\equiv1,5\pmod{12}$, for which
$\lfloor x/6\rfloor$ is even. In either sub-case,
\eqref{eq:pf-uniform} becomes
\[
3^{\frac{p-1}{4}}\equiv(-1)^{\lfloor x/6\rfloor+y/2}\pmod p,
\]
proving the case $12\mid p-1$.

\smallskip
\textbf{The case $12\mid p-5$.} Here $3\nmid xy$. Observe that 
\begin{equation}\label{eq:pf-recip}
y/x\equiv-x/y\pmod p.
\end{equation}
Define $T(\beta):=(-1)^{(x+\beta+1)/2}\,\beta/x$ for any integer
$\beta$. Note that for $\beta=y$,
\begin{equation}\label{eq:T-flip}
T(-y)=-T(y).
\end{equation}
Suppose that $x\equiv y\pmod3$. Since $x\equiv 1 \pmod 4$, and by \eqref{eq:pf-uniform} and \eqref{eq:pf-recip}, we obtain
$$3^{(p-1)/4}\equiv T(y)\pmod p. $$
Now, suppose that $x\equiv-y\pmod3$. It is easy to show that $(x-1)/2\equiv 0\pmod 4$. This, together with \eqref{eq:pf-uniform} and \eqref{eq:T-flip} yields
$$3^{(p-1)/4}\equiv T(-y)\pmod p.$$
Combining the two sub-cases gives 
\[
3^{\frac{p-1}{4}}\equiv T(\pm y)=(-1)^{\frac{x+y+1}{2}}y/x\pmod p,
\]
proving the case $12\mid p-5$ and completing the proof.
\end{proof}
\begin{corollary}{\cite[Conjecture 2.2, Part 3]{Sun2014CANT}} Let $p > 3$ be a prime. Then
\begin{align*}
\sum_{k=0}^{(p-1)/2}\binom{2k}{k}\dfrac{T_{2k}(4,3)}{16^k}\equiv
\begin{cases}
2x(-1)^{\lfloor x/6\rfloor+y/2}\pmod p &\text{ if } 12\mid p-1 \text{ and }p=x^2+y^2 \text{ }(2\nmid x),\\
2y(-1)^{(x+y+1)/2}\pmod p &\text{ if }12\mid p-5 \text{ and }p=x^2+y^2 \text{ }(2\nmid x),\\
0\pmod{p}& \text{ if }p\equiv 3 \pmod4.
\end{cases}
\end{align*}
\end{corollary}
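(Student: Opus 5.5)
The plan is to specialize Theorem~\ref{thm5.6}. I would take $a=2$, $b=2$, $c=3$, so that $b+a=4$ and $a^{2k}4^k=16^k$, and $p\nmid a$ since $p>3$. The sum in Theorem~\ref{thm5.6} runs over $0\le k\le p-1$, but for $p/2<k<p$ we have $\binom{2k}{k}\equiv0\pmod p$. So, modulo $p$, it agrees with the sum over $0\le k\le (p-1)/2$ in the corollary. With these parameters $d=b^2-4c=-8$, and the argument of the Legendre polynomial is
\[
\frac{2ab+d}{4a\sqrt{c}}=\frac{8-8}{8\sqrt3}=0 .
\]
Also $\left(\frac{2a}{p}\right)=\left(\frac{4}{p}\right)=1$. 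Hence
\[
\sum_{k=0}^{(p-1)/2}\binom{2k}{k}\frac{T_{2k}(4,3)}{16^k}\equiv 3^{\frac{p-1}{4}}\,P_{\frac{p-1}{2}}(0)\pmod p,
\]
where the factor $3^{(p-1)/4}$ is read as $(\sqrt3)^{(p-1)/2}$, an element of $\mathbb{F}_p$ when $p\equiv1\pmod4$.

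Next I evaluate $P_n(0)$. If $p\equiv3\pmod4$, then $n=(p-1)/2$ is odd. Since $P_n(-x)=(-1)^nP_n(x)$, this forces $P_n(0)=0$, which gives the third case.

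If $p\equiv1\pmod4$, put $m=(p-1)/4$. The classical value is $P_{2m}(0)=(-1)^m\binom{2m}{m}/4^m$. Using $\binom{(p-1)/2}{m}\equiv\binom{2m}{m}/(-4)^m\pmod p$ (the fact recalled in the proof of Corollary~\ref{coro4.11}), this becomes
\[
P_{\frac{p-1}{2}}(0)\equiv\binom{(p-1)/2}{(p-1)/4}\pmod p .
\]
By Gauss's congruence, $\binom{(p-1)/2}{(p-1)/4}\equiv 2x\pmod p$, where $p=x^2+y^2$ and the sign of $x$ is normalized by $x\equiv1\pmod4$. This is exactly the normalization used in Lemma~\ref{lem:three-quartic}.

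It then remains to multiply $2x$ by the value of $3^{(p-1)/4}$ given in Lemma~\ref{lem:three-quartic}.
\begin{itemize}
\item If $12\mid p-1$, the product is $2x(-1)^{\lfloor x/6\rfloor+y/2}$.
\item If $12\mid p-5$, the product is $2x\cdot(-1)^{(x+y+1)/2}y/x=2y(-1)^{(x+y+1)/2}$.
\end{itemize}
These are the stated cases.

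The main obstacle I expect is the sign bookkeeping. I need to check three things:
\begin{itemize}
\item the sign choice of $\sqrt3$ is consistent, i.e.\ $(\sqrt c)^{(p-1)/2}$ really equals $3^{(p-1)/4}$ as an element of $\mathbb{F}_p$, and the $\sqrt c$ appearing inside $P$ causes no ambiguity (here the argument is $0$, so it does not);
\item the sign of $y$ is irrelevant, since in each formula flipping $y$ leaves the expression unchanged modulo $p$;
\item the corollary's normalization ``$2\nmid x$'' is matched with the convention $x\equiv1\pmod4$ of Gauss's congruence and Lemma~\ref{lem:three-quartic}.
\end{itemize}
For the last point, I would state the result with $x\equiv1\pmod4$ and remark that this is the intended reading.
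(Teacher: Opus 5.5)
Your argument is the paper's proof step for step. You specialize Theorem~\ref{thm5.6} at $(a,b,c)=(2,2,3)$ to reach $3^{(p-1)/4}P_{(p-1)/2}(0)$, dispose of $p\equiv3\pmod4$ by parity, use Gauss's congruence to get $2x\cdot3^{(p-1)/4}$, and finish with Lemma~\ref{lem:three-quartic}. Your direct reduction $P_{(p-1)/2}(0)\equiv\binom{(p-1)/2}{(p-1)/4}$ is slightly cleaner than the paper's detour through $\left(\frac{2}{p}\right)(-3)^{(p-1)/4}$.

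However, one of the sign checks you list is false, and it affects the second case. Because $y$ is even, $(-1)^{(x-y+1)/2}=(-1)^{(x+y+1)/2}$. So under $y\mapsto-y$ the expression $2y(-1)^{(x+y+1)/2}$ changes sign, and so does $(-1)^{(x+y+1)/2}y/x$ in Lemma~\ref{lem:three-quartic}. That case of the lemma therefore cannot hold for both signs of $y$, and fixing $x\equiv1\pmod4$ does not remove the ambiguity.

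Concretely, for $p=5$ the sum is $1+2\cdot22/16+6\cdot886/256\equiv1+4+1\equiv1\pmod5$. This agrees with $2x\cdot3^{(p-1)/4}=6$ for $x=1$. But $(x,y)=(1,2)$ gives $4$, while $(x,y)=(1,-2)$ gives $1$.

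What the proof of Lemma~\ref{lem:three-quartic} actually establishes is $3^{(p-1)/4}\equiv T(y)$ if $x\equiv y\pmod3$ and $3^{(p-1)/4}\equiv T(-y)$ if $x\equiv -y\pmod3$. In other words, the stated formula holds only with the sign of $y$ chosen so that $3\mid x-y$. So you need to add the normalization $3\mid x-y$ to the case $12\mid p-5$. Since $2y(-1)^{(x+y+1)/2}$ is invariant under $(x,y)\mapsto(-x,-y)$, the conditions $2\nmid x$ and $3\mid x-y$ then determine the value. The condition $x\equiv1\pmod4$ is needed only internally, for Gauss's congruence and the lemma.

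In the case $12\mid p-1$ your invariance claim is correct. That formula is also invariant under $x\mapsto-x$, since $\lfloor -x/6\rfloor=-\lfloor x/6\rfloor-1$ for odd $x$. The paper's statement carries the same imprecision in the second case, but your proposal explicitly asserts the invariance there, and that assertion is wrong.
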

\begin{proof}
Taking $a=2$, $b=2$ and $c=3$ in \eqref{5.4+}, we obtain
\begin{align*}
\sum_{k=0}^{(p-1)/2}\binom{2k}{k}\dfrac{T_{2k}(4,3)}{16^k}\equiv \left( \sqrt{3}\right) ^{\frac{p-1}{2}}P_{\frac{p-1}{2}}(0)\pmod{p}.
\end{align*}
It is known that 
\begin{align*}
P_{n}(0)=\begin{cases}
0 &\text{ if } n\text{ is odd},\\
\binom{2m}{m}\frac{(-1)^m}{4^m} & \text{ if } n=2m.
\end{cases}
\end{align*}
Hence, if $p\equiv 3\pmod 4$, we deduce 
\begin{align*}
\sum_{k=0}^{(p-1)/2}\binom{2k}{k}\dfrac{T_{2k}(4,3)}{16^k}\equiv 0\pmod p.
\end{align*}
 Gauss's congruence states that for a prime \(p \equiv 1 \pmod 4\) written uniquely as \(p = x^2 + y^2\) with \(x \equiv 1 \pmod 4\),
 \begin{align*}
\binom{\frac{p-1}{2}}{\frac{p-1}{4}}\equiv 2x\pmod p.
 \end{align*}
It follows that
\begin{align*}
\sum_{k=0}^{(p-1)/2}\binom{2k}{k}\dfrac{T_{2k}(4,3)}{16^k}&\equiv 2x \left(\frac{2}{p}\right) (-3)^{\frac{p-1}{4}}\pmod p\\
&\equiv (-1)^{\frac{p^2-1}{8}}2x(-3)^{\frac{p-1}{4}}\pmod p\\
&=2x3^{\frac{p-1}{4}}\pmod p.
\end{align*}
Applying Lemma \ref{lem:three-quartic} and combining the result with the above case yields the desired result. 
\end{proof}

\begin{lemma}\label{lem:minus2}
Let $p\equiv1\pmod3$ be a prime and write $p=x^2+3y^2$ with $3\mid x-1$.
Then
\begin{align*}
\left(\frac{-2}{p}\right)=(-1)^{xy/2}\left(\frac x3\right).
\end{align*}
\end{lemma}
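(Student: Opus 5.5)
Since $3\mid x-1$, we have $\left(\frac{x}{3}\right)=\left(\frac{1}{3}\right)=1$, so the statement reduces to showing $\left(\frac{-2}{p}\right)=(-1)^{xy/2}$. The plan is to compute both sides from $p \bmod 8$. For the left side we use the supplementary laws: $\left(\frac{-2}{p}\right)=1$ if and only if $p\equiv1,3\pmod8$. For the right side we reduce $p=x^2+3y^2$ modulo $8$ according to the $2$-adic behaviour of $x$ and $y$.

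First, because $p$ is odd, exactly one of $x,y$ is even, so $xy$ is even and the exponent $xy/2$ is an integer. The parity of $xy/2$ is the parity of $e/2$, where $e$ is whichever of $x,y$ is even (the other factor is odd). Hence $(-1)^{xy/2}=1$ exactly when $4\mid e$. We then split into two cases.

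\textbf{Case $y$ even, $x$ odd.} Here $x^2\equiv1\pmod8$. Also $3y^2\equiv0\pmod8$ if $4\mid y$, and $3y^2\equiv12\equiv4\pmod8$ if $y\equiv2\pmod4$. So $p\equiv1\pmod8$ when $4\mid y$ and $p\equiv5\pmod8$ otherwise. Therefore $\left(\frac{-2}{p}\right)=1$ iff $4\mid y$ iff $(-1)^{xy/2}=1$.

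\textbf{Case $x$ even, $y$ odd.} Now $3y^2\equiv3\pmod8$, and $x^2\equiv0$ or $4\pmod8$ according as $4\mid x$ or not. So $p\equiv3\pmod8$ when $4\mid x$ and $p\equiv7\pmod8$ otherwise. Again $\left(\frac{-2}{p}\right)=1$ iff $4\mid x$ iff $(-1)^{xy/2}=1$.

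Combining the two cases gives $\left(\frac{-2}{p}\right)=(-1)^{xy/2}=(-1)^{xy/2}\left(\frac{x}{3}\right)$. There is no real obstacle. The only points needing care are noting that the normalization $3\mid x-1$ makes $\left(\frac{x}{3}\right)=1$, and that the sign of $y$ does not affect the answer, since only $y^2$ and the divisibility $4\mid y$ enter.
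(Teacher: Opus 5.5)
Your proof is correct and takes essentially the same route as the paper. Both arguments note that $\left(\frac{x}{3}\right)=1$, write the even one of $x,y$ as $2c$, reduce $p=x^2+3y^2$ modulo $8$ in the two parity cases, and compare the result with the supplementary law $\left(\frac{-2}{p}\right)=1$ if and only if $p\equiv 1,3\pmod 8$.
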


\begin{proof}
Since $3\mid x-1$, we have $\big(\frac x3\big)=1$, so it suffices to show
$\big(\frac{-2}p\big)=(-1)^{xy/2}$. As $p$ is odd, exactly one of $x,y$ is
even; write $2c$ for whichever of $x,y$ is even, so that $xy/2\equiv c
\pmod2$. A direct computation modulo $8$ (using $x^2\equiv1$ or $0,4\pmod8$
according to parity, and likewise for $3y^2$) shows
\[
p=x^2+3y^2\equiv\begin{cases}4c^2+3\pmod8& \text{ if }x=2c,\\
1+12c^2\pmod8 &\text{ if }y=2c.\end{cases}
\]
Hence $p\equiv3,1\pmod8$ if $c$ is even and $p\equiv7,5\pmod8$ if $c$ is odd,
respectively. Since $\big(\frac{-2}p\big)=1$ for $p\equiv1,3\pmod8$ and
$\big(\frac{-2}p\big)=-1$ for $p\equiv5,7\pmod8$, in every case
$\big(\frac{-2}p\big)=(-1)^c=(-1)^{xy/2}$, as required.
\end{proof}
\begin{corollary}{\cite[Conjecture 2.2, Part 2]{Sun2014CANT}} Let $p > 3$ be a prime. Then
\begin{align*}
\sum_{k=0}^{(p-1)/2}\binom{2k}{k}\dfrac{T_{2k}(1,-3)}{4^k}\equiv \begin{cases}
(-1)^{xy/2}(\frac{x}{3})2x\pmod{p}& \text{ if } p=x^2+3y^2,\\
0\pmod{p} & \text{ if } p\equiv 2\pmod 3 .
\end{cases}
\end{align*}
\end{corollary}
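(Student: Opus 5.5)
The plan is to specialize Theorem~\ref{thm5.6} so that its left-hand side becomes the sum in the statement, reduce the resulting Legendre-polynomial value to a known evaluation, and fix the sign with Lemma~\ref{lem:minus2}.

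\textbf{Specializing Theorem~\ref{thm5.6}.} Take $a=1$, $b=0$, $c=-3$ in \eqref{5.4+}. Then $b+a=1$, $a^{2k}4^k=4^k$, and $d=b^2-4c=12$. The left-hand side of \eqref{5.4+} is summed up to $p-1$, but $\binom{2k}{k}\equiv0\pmod p$ for $p/2<k<p$, so modulo $p$ it equals our sum over $0\le k\le (p-1)/2$.

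On the right-hand side the argument is
\[
\frac{2ab+d}{4a\sqrt c}=\frac{12}{4\sqrt{-3}}=-\sqrt{-3}.
\]
Write $n=(p-1)/2$. Using $P_n(-x)=(-1)^nP_n(x)$ and $(-1)^n=\left(\frac{-1}{p}\right)$, I expect to get
\[
\sum_{k=0}^{n}\binom{2k}{k}\frac{T_{2k}(1,-3)}{4^k}\equiv\left(\frac{-2}{p}\right)\left(\sqrt{-3}\right)^{n}P_{n}\left(\sqrt{-3}\right)\pmod p .
\]

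\textbf{Rewriting as a trinomial coefficient.} Next I use $T_n(B,C)=(\sqrt D)^nP_n(B/\sqrt D)$ with $D=B^2-4C$. Choosing $\sqrt D=\sqrt{-3}$ and $B/\sqrt D=\sqrt{-3}$ forces $B=-3$ and $D=-3$, hence $C=3$. Therefore
\[
\left(\sqrt{-3}\right)^nP_n\left(\sqrt{-3}\right)=T_n(-3,3)=(-1)^nT_n(3,3),
\]
which reduces the problem to evaluating $T_{(p-1)/2}(3,3)$ modulo $p$.

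\textbf{The main obstacle: evaluating this value.} The two natural routes are:
\begin{itemize}[leftmargin=*]
\item Follow the $T_{2k}(12,-7)$ corollary: use \cite[Lemma~2.5]{zhsun2013} to turn $(\sqrt{-3})^{n}P_{n}(\sqrt{-3})$ into $\left(\frac{-3}{p}\right)P_{[p/3]}$ or $P_{[p/4]}$ at a rational point. Then invoke the known Z.-H. Sun evaluations of such values for $p=x^2+3y^2$, obtained from the cubic-residue analogue of Gauss's congruence $\binom{2(p-1)/3}{(p-1)/3}\equiv -(2x)\pmod p$ with $x\equiv1\pmod3$.
\item Expand $P_n(x)\equiv\sum_k\binom{2k}{k}^2\left(\frac{1-x}{8}\right)^k\pmod p$ and reduce to a binomial sum whose value modulo $p$ is classical.
\end{itemize}
I would try the first route first. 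Either way the expected outcome is $2x$ up to an explicit sign when $p\equiv1\pmod3$, and $0$ when $p\equiv2\pmod3$; in the latter case the relevant binomial coefficient vanishes modulo $p$, or the quadratic-character argument gives zero.

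\textbf{Fixing the sign.} Finally I normalize $x\equiv1\pmod3$, so that $\left(\frac{x}{3}\right)=1$. The residual factor $\left(\frac{-2}{p}\right)$, together with any $\left(\frac{-1}{p}\right)$ or $\left(\frac{-3}{p}\right)=1$ factors, converts into $(-1)^{xy/2}\left(\frac{x}{3}\right)$ by Lemma~\ref{lem:minus2}. For a general sign choice of $x$, multiplying by $\left(\frac{x}{3}\right)$ makes $\left(\frac{x}{3}\right)2x$ invariant under $x\mapsto -x$, which gives the stated form.
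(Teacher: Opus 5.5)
\textbf{There is a genuine gap: the evaluation that carries the sign is never done.}

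Your specialization agrees with the paper. With $a=1$, $b=0$, $c=-3$ the argument in \eqref{5.4+} is $-\sqrt{-3}$, and the sum is $\equiv\left(\frac{-2}{p}\right)(\sqrt{-3})^{n}P_n(\sqrt{-3})\pmod p$ with $n=(p-1)/2$. Your identification $(\sqrt{-3})^nP_n(\sqrt{-3})=T_n(-3,3)=(-1)^nT_n(3,3)$ is also correct, so the sum is $\equiv\left(\frac{2}{p}\right)T_{(p-1)/2}(3,3)\pmod p$.

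But the proof stops exactly where the content of the corollary begins. The statement \emph{is} the assertion that this value equals $2x$ with the specific sign $(-1)^{xy/2}\left(\frac{x}{3}\right)$, and you leave it as ``$2x$ up to an explicit sign''. Your last paragraph cannot make up for this. Lemma~\ref{lem:minus2} only rewrites the factor $\left(\frac{-2}{p}\right)$ you already have as $(-1)^{xy/2}\left(\frac{x}{3}\right)$ when $x\equiv1\pmod 3$. It cannot absorb an undetermined sign coming out of the evaluation, and the phrase ``together with any $\left(\frac{-1}{p}\right)$ or $\left(\frac{-3}{p}\right)$ factors'' refers to nothing you have computed.

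Concretely, what must be proved is:
\begin{itemize}
\item $T_{(p-1)/2}(3,3)\equiv\left(\frac{-1}{p}\right)2x\pmod p$ for $p=x^2+3y^2$ with $x\equiv1\pmod3$;
\item $T_{(p-1)/2}(3,3)\equiv0\pmod p$ for $p\equiv2\pmod3$.
\end{itemize}
Your first route is the paper's route, but there the specifics are the whole proof. The paper uses \cite[Theorem~2.1(i)]{zhsun2013}, which gives $(\sqrt{-3})^{n}P_{n}(\sqrt{-3})\equiv\left(\frac{-3}{p}\right)P_{[p/4]}\bigl(-\tfrac{5}{3}\bigr)\pmod p$. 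It then uses \cite[Theorem~2.3]{zhsun2013}, which gives $P_{[p/4]}\bigl(-\tfrac{5}{3}\bigr)\equiv2x$ (with $x\equiv1\pmod3$), resp.\ $0$. Only then is the sum $\equiv\left(\frac{-2}{p}\right)2x$, and Lemma~\ref{lem:minus2} finishes.

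\textbf{The concrete input you name for this step is misquoted.} For $p=x^2+3y^2$ with $x\equiv1\pmod3$ it is not true that $\binom{2(p-1)/3}{(p-1)/3}\equiv-2x\pmod p$. For $p=7$ the left side is $\binom{4}{2}=6$, while $x=-2$ gives $-2x=4$. That congruence holds with $-L$, where $4p=L^2+27M^2$ and $L\equiv1\pmod3$, and $L\neq\pm2x$ in general (for $p=19$: $L=7$, $x=4$). Likewise, in your second route the correct expansion is $P_n(t)\equiv\sum_k\binom{2k}{k}^2\left(\frac{1-t}{32}\right)^k\pmod p$, not with $\frac{1-t}{8}$.

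\textbf{An alternative way to use your reduction.} If you want to avoid the citations, note that $z^2+3z+3=\bigl((z+1)^3-1\bigr)/z$. Hence $T_n(3,3)=[z^{2n}]\bigl((z+1)^3-1\bigr)^n$, and so $T_{(p-1)/2}(3,3)\equiv-\sum_{t\in\mathbb{F}_p}\left(\frac{t^3-1}{p}\right)\pmod p$. This settles $p\equiv2\pmod3$ at once, since cubing is a bijection and the character sum vanishes. For $p\equiv1\pmod3$ you would still have to carry out the classical evaluation of this cubic character sum, including its sign. That sign determination is the missing step on either route.
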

\begin{proof}
Taking $a=1$, $b=0$, and $c=-3$ in \eqref{5.4+} and by the fact that $P_n(-x)=(-1)^nP_n(x)$, we have
\begin{align*}
\sum_{k=0}^{(p-1)/2}\binom{2k}{k}\dfrac{T_{2k}(1,-3)}{4^k}\equiv\left( \frac{-2}{p}\right) \left( \sqrt{-3}\right) ^{\frac{p-1}{2}}P_{\frac{p-1}{2}}\left( \sqrt{-3}\right)\pmod p.
\end{align*}
In view of \cite[Theorem 2.1(i)]{zhsun2013}, we obtain
\begin{align*}
\sum_{k=0}^{(p-1)/2}\binom{2k}{k}\dfrac{T_{2k}(1,-3)}{4^k}\equiv\left( \frac{2}{p}\right)\left( \frac{3}{p}\right) P_{[\frac{p}{4}]}\left( -\frac{5}{3}\right)\pmod p.
\end{align*}
Hence, by \cite[Theorem 2.3]{zhsun2013}, we have
\begin{align*}
\sum_{k=0}^{(p-1)/2}\binom{2k}{k}\dfrac{T_{2k}(1,-3)}{4^k}\equiv\begin{cases}
2x\left( \frac{-2}{p}\right) \pmod p &\text{ if }3\mid p-1\text{, }p=x^2+3y^2\text{ and }3\mid x-1,\\
0 \pmod p & \text{ if }p\equiv 2\pmod 3.
\end{cases}
\end{align*}
Finally, Lemma \ref{lem:minus2} gives the desired result.
\end{proof}
\begin{corollary}Let $p > 3$ be a prime. Then \eqref{suncnj2.2} is true modulo $p$.
\end{corollary}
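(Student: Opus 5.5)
We want \eqref{suncnj2.2} modulo $p$ for both sums $\sum_{k\le (p-1)/2}\binom{2k}{k}T_{2k}(2,3)/16^k$ and $\sum_{k\le(p-1)/2}\binom{2k}{k}T_{2k}(4,-3)/16^k$. The plan is to apply Theorem~\ref{thm5.6} twice, with $a=2$ so that $a^{2k}4^k=16^k$. Reduced mod $p$, the right side of \eqref{suncnj2.2} is $\left(\frac{-1}{p}\right)\left(\frac{x}{3}\right)2x$ when $p=x^2+3y^2$, and $0$ when $p\equiv2\pmod 3$. Since $\binom{2k}{k}\equiv0\pmod p$ for $p/2<k<p$, the sum up to $(p-1)/2$ agrees mod $p$ with the sum up to $p-1$ in \eqref{5.4+}.

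\textbf{First sum.} For $T_{2k}(2,3)$ take $a=2$, $b=0$, $c=3$, so $d=-12$. The argument of the Legendre polynomial in \eqref{5.4+} is $\frac{2ab+d}{4a\sqrt c}=\frac{-12}{8\sqrt3}=-\frac{\sqrt3}{2}$. Using $P_n(-x)=(-1)^nP_n(x)$, this gives
\[
\sum_{k=0}^{(p-1)/2}\binom{2k}{k}\frac{T_{2k}(2,3)}{16^k}\equiv\left(\frac{-1}{p}\right)\left(\frac{2}{p}\right)(\sqrt3)^{\frac{p-1}{2}}P_{\frac{p-1}{2}}\left(\frac{\sqrt3}{2}\right)\pmod p .
\]

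\textbf{Second sum.} For $T_{2k}(4,-3)$ take $a=2$, $b=2$, $c=-3$, so $d=16$. The argument is $\frac{8+16}{8\sqrt{-3}}=\frac{3}{\sqrt{-3}}=-\sqrt{-3}$, giving
\[
\left(\frac{-1}{p}\right)\left(\frac{2}{p}\right)(\sqrt{-3})^{\frac{p-1}{2}}P_{\frac{p-1}{2}}(\sqrt{-3})\pmod p .
\]
The case $a=1$, $b=0$, $c=-3$ in the proof of the previous corollary gives
\[
\sum_{k}\binom{2k}{k}\frac{T_{2k}(1,-3)}{4^k}\equiv\left(\frac{-2}{p}\right)(\sqrt{-3})^{\frac{p-1}{2}}P_{\frac{p-1}{2}}(\sqrt{-3})\pmod p .
\]
The two expressions differ only by the Legendre-symbol prefactor, and $\left(\frac{-1}{p}\right)\left(\frac{2}{p}\right)=\left(\frac{-2}{p}\right)$. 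So mod $p$ the $T_{2k}(4,-3)$ sum equals the $T_{2k}(1,-3)$ sum, and that corollary already evaluates it.

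I will quote its intermediate form, before Lemma~\ref{lem:minus2} is applied. With $x\equiv1\pmod3$ this is $2x\left(\frac{-2}{p}\right)$, which is $0$ if $p\equiv2\pmod3$. I must then show $2x\left(\frac{-2}{p}\right)\equiv\left(\frac{-1}{p}\right)\left(\frac{x}{3}\right)2x\pmod p$ for any sign normalization of $x$. Both sides flip sign under $x\mapsto -x$, so it suffices to check $x\equiv1\pmod3$. There it reduces to $\left(\frac{2}{p}\right)=1$, which is false in general. So I expect some care in matching the normalizations of $x$ in \cite[Theorem~2.3]{zhsun2013} against \eqref{suncnj2.2}; possibly I have the factor $\left(\frac{2a}{p}\right)=\left(\frac{4}{p}\right)=1$ wrong. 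Indeed, with $a=2$ the prefactor in \eqref{5.4+} is $\left(\frac{4}{p}\right)=1$, not $\left(\frac{2}{p}\right)$. Redoing the sign bookkeeping, the $(4,-3)$ sum is $\left(\frac{-1}{p}\right)(\sqrt{-3})^{\frac{p-1}{2}}P_{\frac{p-1}{2}}(\sqrt{-3})$, which is $\left(\frac{2}{p}\right)$ times the $(1,-3)$ sum, i.e.\ $\left(\frac{-1}{p}\right)2x$ for $x\equiv1\pmod 3$. This matches \eqref{suncnj2.2}.

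\textbf{Converting the first sum.} I would relate $(\sqrt3)^{n}P_n(\sqrt3/2)$, with $n=\frac{p-1}{2}$, to the same quantity $(\sqrt{-3})^{n}P_n(\sqrt{-3})$. The tool is a transformation from \cite{zhsun2013} of the type $P_{\frac{p-1}{2}}(t)\equiv\left(\frac{\cdot}{p}\right)P_{[\frac p4]}(\cdot)$, as in \cite[Theorem~2.1]{zhsun2013}, or a direct mod-$p$ identity
\[
P_n(x)\equiv\sum_k\binom{n}{k}\binom{n+k}{k}\left(\frac{x-1}{2}\right)^k .
\]
The aim is to reduce both to $P_{[\frac p4]}(-\frac53)$ up to explicit Legendre symbols. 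This matching of the first sum is the main obstacle. Failing a clean transformation, I would use the finite-field character-sum interpretation $P_{\frac{p-1}{2}}(t)\equiv -\sum_{u}\left(\frac{u^3+\cdots}{p}\right)$ and identify both curves as having CM by $\mathbb{Q}(\sqrt{-3})$, which yields $\pm2x$; the sign is then fixed by Lemma~\ref{lem:minus2}.
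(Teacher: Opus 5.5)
Your treatment of the $T_{2k}(4,-3)$ sum is correct once the spurious $\left(\frac{2}{p}\right)$ is removed. Remove it from your first display as well, since $\left(\frac{2a}{p}\right)=\left(\frac{4}{p}\right)=1$. The prefactor of the $T_{2k}(4,-3)$ sum differs from that of the case $(a,b,c)=(1,0,-3)$ by $\left(\frac{2}{p}\right)$. So the intermediate value $2x\left(\frac{-2}{p}\right)$ (with $3\mid x-1$) becomes $2x\left(\frac{-1}{p}\right)$, and the value is $0$ when $p\equiv2\pmod 3$. Normalizing $x\equiv1\pmod3$ is legitimate because $\left(\frac{x}{3}\right)2x$ is \emph{invariant} under $x\mapsto-x$, not because both sides change sign. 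Your order is the reverse of the paper's: the paper evaluates the $T_{2k}(2,3)$ sum first, then observes that the $T_{2k}(4,-3)$ sum reduces to the same expression $\left(\frac{-1}{p}\right)(\sqrt{-3})^{\frac{p-1}{2}}P_{\frac{p-1}{2}}(\sqrt{-3})$.

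The genuine gap is the $T_{2k}(2,3)$ sum, which is half of the statement and which your proposal leaves unproved. From \eqref{5.4+} you correctly reach $\left(\frac{-1}{p}\right)(\sqrt3)^{\frac{p-1}{2}}P_{\frac{p-1}{2}}(\sqrt3/2)$. What is missing is the transformation
\[
(\sqrt3)^{\frac{p-1}{2}}P_{\frac{p-1}{2}}\Bigl(\frac{\sqrt3}{2}\Bigr)\equiv(\sqrt{-3})^{\frac{p-1}{2}}P_{\frac{p-1}{2}}(\sqrt{-3})\pmod p,
\]
which the paper imports from \cite[Lemma~2.5]{zhsun2013}. With it the two sums are congruent modulo $p$, and your evaluation of the second one finishes the proof. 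None of your suggested routes provides this step:
\begin{itemize}
\item The ``direct mod-$p$ identity'' you display is just the definition of $P_n$, so it transforms nothing.
\item The reduction to $P_{[\frac p4]}(-\frac53)$ via a result like \cite[Theorem~2.1]{zhsun2013} is only named. You would have to check that it applies at the argument $\sqrt3/2$ and carry out the sign bookkeeping, which is exactly the point at issue.
\item The CM fallback is sound as far as it goes. The Legendre parameters $\lambda=\frac{t-1}{t+1}$ for $t=\sqrt3/2$ and $t=\sqrt{-3}$ are $4\sqrt3-7$ and $\frac{1+\sqrt{-3}}{2}$, with $j$-invariants $54000$ and $0$, both CM by orders in $\mathbb{Q}(\sqrt{-3})$. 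But this only yields $\pm2x$, and fixing that sign is precisely what the statement requires. Lemma~\ref{lem:minus2} merely rewrites $\left(\frac{-2}{p}\right)$ in terms of $x,y$ and cannot determine the sign of a Frobenius trace.
\item Moreover, when $\left(\frac{3}{p}\right)=-1$ the argument $\sqrt3/2$ is not in $\mathbb{F}_p$, so the character-sum formula you quote does not even apply directly.
\end{itemize}
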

\begin{proof}
Substituting $a=2$, $b=0$, $c=3$ into \eqref{5.4+}, we have
\begin{align*}
\sum_{k=0}^{(p-1)/2}\binom{2k}{k}\dfrac{T_{2k}(2,3)}{16^k}\equiv\left(\frac{-1}{p}\right)\left(\sqrt{3}\right)^{\frac{p-1}{2}}P_{\frac{p-1}{2}}\left(\frac{\sqrt{3}}{2}\right)\pmod{p}.
\end{align*}
By \cite[Lemma 2.5]{zhsun2013}, we obtain
\begin{align}\label{5.6++}
\sum_{k=0}^{(p-1)/2}\binom{2k}{k}\dfrac{T_{2k}(2,3)}{16^k}&\equiv \left(\frac{-1}{p}\right)\left(\sqrt{-3}\right)^{\frac{p-1}{2}}P_{\frac{p-1}{2}}\left(\sqrt{-3}\right)\pmod{p}\\
&\equiv\left(\frac{-1}{p}\right)\left(\frac{-3}{p}\right)P_{[\frac{p}{4}]}\left(-\frac{5}{3}\right)\pmod{p}\nonumber .
\end{align}
Thus, by \cite[Theorem 2.3]{zhsun2013}, 
\begin{align*}
\sum_{k=0}^{(p-1)/2}\binom{2k}{k}\dfrac{T_{2k}(2,3)}{16^k}\equiv \begin{cases}
2x\left( \frac{-1}{p}\right) \pmod p &\text{ if }3\mid p-1\text{, }p=x^2+3y^2\text{ and }3\mid x-1,\\
0 \pmod p & \text{ if }p\equiv 2\pmod 3.
\end{cases}
\end{align*}
On the other hand, taking $a=2$, $b=2$, $c=-3$ in \eqref{5.4+}, we obtain
\begin{align*}
\sum_{k=0}^{(p-1)/2}\binom{2k}{k}\dfrac{T_{2k}(4,-3)}{16^k}\equiv \left(\frac{-1}{p}\right)\left(\sqrt{-3}\right)^{\frac{p-1}{2}}P_{\frac{p-1}{2}}\left(\sqrt{-3}\right)\pmod{p}.
\end{align*}
In view of \eqref{5.6++}, we have
\begin{align*}
\sum_{k=0}^{(p-1)/2}\binom{2k}{k}\dfrac{T_{2k}(4,-3)}{16^k}\equiv\sum_{k=0}^{(p-1)/2}\binom{2k}{k}\dfrac{T_{2k}(2,3)}{16^k}\pmod p.
\end{align*}
Hence the proof is done.
\end{proof}
\vspace{1cm}
\textbf{Acknowledgments.} This paper is partially supported by DGRSDT grant $No^{\circ}$ C0656701.

\end{document}